\newtheorem{theorem}{Theorem}[section]
\newtheorem{lemma}[theorem]{Lemma}
\newtheorem{corollary}[theorem]{Corollary}
\newtheorem{problem}[theorem]{Problem}
\newtheorem{observation}[theorem]{Observation}
\theoremstyle{definition}
\newtheorem{definition}[theorem]{Definition}
\newtheorem{example}[theorem]{Example}
\newcommand{\R}{\mathbb{R}}
\newcommand{\N}{\mathbb{N}}
\newcommand{\Z}{\mathbb{Z}}
\newcommand{\B}{\mathcal{B}}
\newcommand{\explicitSet}[1]{\left\lbrace #1 \right\rbrace}
\newcommand{\brackets}[1]{\left\langle #1 \right\rangle}
\newcommand{\set}[2]{\explicitSet{#1 \colon #2}}
\newcommand{\seq}[2]{\brackets{#1 \colon #2}}
\newcommand{\e}{\varepsilon}
\newcommand{\0}{\emptyset}
\newcommand{\sub}{\subseteq}
\newcommand{\rest}{\!\restriction\!}
\newcommand{\homeo}{\approx}
\newcommand{\closure}[1]{\overline{#1}}
\begin{document}

\def\joinrel{\mkern-3mu}
\newcommand{\varproj}{\displaystyle \lim_{\multimapinv\joinrel-\joinrel-}}

\title{Endpoint-homogeneous fans}

\author{Will Brian}
\address{
W. R. Brian\\
Department of Mathematics and Statistics\\
University of North Carolina at Charlotte\\
Charlotte, NC, USA}
\email{wbrian.math@gmail.com}
\urladdr{wrbrian.wordpress.com}

\author{Rene Gril Rogina}
\address{
R. Gril Rogina\\
Faculty of Sciences and Mathematics\\
University of Maribor\\
Maribor, Slovenia}
\email{rene.gril@student.um.si}

\subjclass[2020]{54F15, 54F65}
\keywords{smooth fans, Cantor fan, Lelek fan, homogeneity}

\thanks{The first author is supported in part by NSF grant DMS-2154229. 
The second author is supported in part by the Slovenian Research Agency (research program J1-4632).}

\maketitle

\begin{abstract}
A fan $F$ is \emph{endpoint-homogeneous} if for any two endpoints $e,e'$ of $F$, there is a homeomorphism $h: F \to F$ such that $h(e) = e'$. 

We prove there are uncountably many distinct homeomorphism types of endpoint-homogeneous smooth fans. To do this, we associate to each such fan $F$ a topological invariant, in the form of a characteristic subset $EPG(F) \sub [0,1]$ describing how the endpoints of $F$ limit onto any given blade of $F$. We describe precisely all the uncountably many different $X \sub [0,1]$ that can arise as $EPG(F)$ for some endpoint-homogeneous smooth fan $F$. We also prove the existence of $\frac{1}{n}$-homogeneous smooth fans for all $n \geq 5$. 
\end{abstract}

\section{Introduction}

A topological space $X$ is \emph{homogeneous} if for any $x,y \in X$, there is a homeomorphism $h: X \to X$ such that $h(x) = y$. 
This property expresses that $X$ has an especially rich group of self-homeomorphisms. 

Many continua fail to be homogeneous, but still have very rich self-homeomorphism groups. 
The class of \emph{fans} (which is defined precisely in the next section) contains many examples of this kind. 
A fan cannot be homogeneous. It has one ``top'' point, many endpoints, and many points that are neither. These three classes of points are topologically defined, and consequently are invariant under homeomorphisms. 
Nevertheless, some fans, such as the Cantor fan and the Lelek fan, have extremely rich homeomorphism groups (see \cite{AHJ}, \cite{BK}). 

A fan $F$ is called \emph{endpoint-homogeneous} if for any two endpoints $e,e'$ of $F$, there is a homeomorphism $h: F \to F$ such that $h(e) = e'$. Equivalently, $F$ is endpoint-homogeneous if for any two ``blades'' of the fan (arcs connecting the top of the fan to some endpoint), there is a self-homeomorphism of $F$ that restricts to a homeomorphism between these two blades. 

Let $F$ be an endpoint-homogeneous fan and $E$ its set of endpoints. 
Because the blades of $F$ are all topologically indistinguishable, there is a single set $EPG(F) \sub [0,1]$ such that for every blade $B$ of $F$, there is a homeomorphism $B \to [0,1]$ mapping $B \cap \closure{E \setminus B}$ onto $EPG(F)$. In other words, $EPG(F)$ is a topological invariant describing how the endpoints of $F$ limit onto any given blade of $F$. 

In this paper we work towards a classification of endpoint-homogeneous smooth fans. 
Our main results are:

\begin{enumerate}
\item A set $X \sub [0,1]$ is equal to the invariant $EPG(F)$ of some smooth endpoint-homogeneous fan if and only if:
\begin{itemize}
\item[$\circ$] $X = \0$, $\{1\}$, or $[0,1]$, or
\item[$\circ$] $X$ is a closed subset of $[0,1]$ with $0 \in X$ and $1 \notin X$, or
\item[$\circ$] $X$ is a closed subset of $[0,1]$ with $0,1 \in X$, and $1$ is an isolated point of $X$.
\end{itemize} 
\item In particular, it follows that there are uncountably many homeomorphism types of endpoint-homogeneous smooth fans. 
\end{enumerate}

\noindent We conjecture that if $F$ and $G$ are smooth fans and $EPG(F) = EPG(F) \neq \0$, then $F$ and $G$ are homeomorphic. The case $EPG(F) = \0$ is something of a degenerate case; there are countably many homeomorphism types of smooth fans for which the endpoints do not accumulate onto any blades (namely those fans with finitely many blades, also known as the $n$-ods, with $n \geq 3$). 
If true, this conjecture, combined with statement $(1)$ above, would give an exact classification of all endpoint-homogeneous smooth fans. 

We do not say much about non-smooth fans, but we do provide examples of non-smooth endpoint-homogeneous fans, and show 
\begin{enumerate}
\item[$(3)$] There are uncountably many distinct homeomorphism types of non-smooth endpoint-homogeneous fans, including some that violate statement $(1)$ above.
\end{enumerate}

Another natural weakening of homogeneity that can apply to fans, or dendroids more generally, is $\frac{1}{3}$-homogeneity. 
A space $X$ is \emph{$\frac{1}{n}$-homogeneous} if there is a partition of $X$ into $n$ sets (but no fewer) such that if $x,y \in X$ lie in the same partition piece, then there is a homeomorphism $h: X \to X$ with $h(x) = y$. Equivalently, $X$ is $\frac{1}{n}$-homogeneous if the homeomorphism group of $X$ has $n$ distinct orbits with respect to its natural action on $X$. 

As mentioned above, fans have $3$ distinct classes of points that are invariant under homeomorphisms: the top, the endpoints, and everything else (usually called the \emph{ordinary} points). Thus $\frac{1}{3}$-homogeneity is essentially the strongest weakening of homogeneity that can apply to fans. But it is perhaps too strong. 
In \cite{AHJ}, Acosta, Hoehn, and Ju\'arez completely classify all $\frac{1}{3}$-homogeneous smooth fans. 
Other than the simple $n$-ods (again something of a degenerate case), the $\frac{1}{3}$-homogeneous smooth fans come in precisely $4$ homeomorphism types: the Cantor fan, the Lelek fan, and two others. 
They also prove that no smooth fan is $\frac{1}{4}$-homogeneous. 
In addition to the results listed above, we show:
\begin{enumerate}
\item[$(4)$] For every $n \geq 5$, there is an endpoint-homogeneous, $\frac{1}{n}$-homogeneous smooth fan.
\end{enumerate} 

Note that endpoint-homogeneity is a strict weakening of $\frac{1}{3}$-homogeneity: we require only that the endpoints (but not the ordinary points) be an orbit of the homeomorphism group. 
This way of weakening $\frac{1}{3}$-homogeneity brings us from having only $4$ nontrivial examples to having uncountably many. 

It is worth mentioning that $\frac{1}{n}$-homogeneity is a broadly studied topic in continuum theory, not just with $n=3$ and not just with fans. One notable example is \cite{APJ}, which provides a complete classification of all $\frac{1}{3}$-homogeneous dendrites. The introduction of \cite{APJ} contains an extensive list of papers on the topic of $\frac{1}{n}$-homogeneity.

It is also worth mentioning that the opposite of endpoint-homogeneous fans was studied by Hern\'andez-Guti\'errez and Hoehn in \cite{HGH}. They define a fan $F$ to be \emph{endpoint-rigid} if every self-homeomorphism of $F$ restricts to the identity map on the set of endpoints of $F$. They construct many examples of such fans, including several where the set of endpoints of the fan (with the rest of the fan removed) is a homogeneous space.

The characteristic set $EPG(F)$, as described above, is not well-defined for all smooth fans, but only those for which no two blades can be distinguished by looking at limits of endpoints. (We do not know whether this latter class is actually larger than the class of smooth endpoint-homogeneous fans.) 
The fans with this property were first defined in \cite{Connecting}, where the fans $F$ such that $EPG(F) = X \sub [0,1]$ are called the $X$\emph{-endpoint generated} fans (or $X$-EPG fans). 
These arise in a classification of the Mahavier products obtained from a pair of lines meeting at the origin. (In all but a few cases, it is an $X$-EPG fan.) This classification is the main result of \cite{Connecting} and \cite{BGR}. 
The work in the present paper arose from the desire to classify those $X$ for which smooth $X$-EPG fans exist, followed by the realization that all known constructions of these fans are endpoint-homogeneous.

\section{Definitions and notation}

In this section we give definitions, notation, and preliminary results that are used throughout the paper. 

\begin{definition}
	A \emph{continuum} is a non-empty compact connected metric space.
	A \emph{subcontinuum} of some continuum $X$ is a subspace of $X$ that is itself a continuum.
\end{definition}

In what follows, we sometimes write $I$ to denote the unit interval $[0,1]$. Any subset of a continuum homeomorphic to $I$ is an \emph{arc}. If $X$ is a continuum and $A \sub X$ is an arc, then the \emph{endpoints} of $A$ are the two points $a,b \in A$ such that $A \setminus \{a\}$ and $A \setminus \{b\}$ are connected. 
A \emph{simple triod} is a continuum homeomorphic to $([-1,1]\times \{0\})\cup (\{0\}\times I)$, i.e., three copies of $I$ glued together at an endpoint. Similarly, a \emph{simple $n$-od} is $n$ copies of $I$ glued together at an endpoint. 
If $T$ is a simple triod (or a simple $n$-od), then the \emph{top point} of $T$ is the unique point $t \in T$ such that $T \setminus \{t\}$ has three connected components (or $n$ components for a simple $n$-od).

\begin{definition}
	$\ $
	\begin{enumerate}
		\item A continuum $X$ is \emph{unicoherent}, if whenever $A$ and $B$ are subcontinua of $X$ such that $X = A\cup B$, then $A\cap B$ is connected.
		\item A continuum $X$ is \emph{hereditarily unicoherent} if every subcontinuum of $X$ is unicoherent.
		\item A continuum $X$ is a \emph{dendroid}, if it is an arcwise connected, hereditarily unicoherent continuum.
		\item Let $X$ be a dendroid. A point $x\in X$ is an \emph{endpoint of $X$}, if it is an endpoint of every arc in $X$ that contains it. The set of all endpoints of $X$ is denoted $E(X)$.
		\item Let $X$ be a dendroid. A point $x \in X$ is a \emph{ramification point} of $X$ if there is a simple triod $T \sub X$ with top $x$. The set of all ramification points of $X$ is denoted $R(X)$. A point in $X \setminus (E(X) \cup R(X))$ is an \emph{ordinary point}; the set of these points is denoted $O(X)$.
		\item A \emph{fan} is a dendroid with exactly one ramification point $t$, which is called the \emph{top} of the fan.
		\item A fan $F$ is \emph{endpoint-homogeneous} if for any $e,e' \in F$, there is a homeomorphism $h: F \to F$ such that $h(e) = e'$.
\end{enumerate}
\end{definition}

\begin{observation}\label{homeo_end-points}
Let $X$ and $Y$ be dendroids and $h:X\to Y$ a homeomorphism. Then $h$ maps 
$E(X)$ onto $E(Y)$, $R(X)$ onto $R(Y)$, and $O(X)$ onto $O(Y)$. 
In particular, no fan is homogeneous.
\end{observation}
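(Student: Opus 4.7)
The plan is to argue that each of $E(X)$, $R(X)$, and $O(X)$ is defined in purely topological terms, so that a homeomorphism $h\colon X\to Y$ must send each to its counterpart; the non-homogeneity of fans then drops out of the special form of $R(F)$.

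First I would treat the endpoint set $E(X)$. Its definition unwinds to: $x\in E(X)$ iff $x$ is an endpoint of every arc $A\sub X$ containing $x$, where an endpoint of an arc $A$ is simply a point $a\in A$ such that $A\setminus\{a\}$ is connected. Both conditions are stated entirely in topological language — arcs are defined up to homeomorphism, and connectedness of the complement is preserved by $h$ — so $x\in E(X)$ iff $h(x)\in E(Y)$.

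The same strategy handles $R(X)$: a ramification point is the top of some simple triod $T\sub X$, and the top of a simple triod $T$ is intrinsically characterized as the unique point whose removal from $T$ leaves three components. Since $h$ sends simple triods in $X$ onto simple triods in $Y$ and preserves this intrinsic characterization of their tops, $h(R(X))=R(Y)$. Because $E$, $R$, and $O$ partition the dendroid, it then follows automatically that $h(O(X))=O(Y)$.

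For the final clause, a fan $F$ has a unique ramification point $t$, so any self-homeomorphism $h$ of $F$ must satisfy $h(\{t\})=\{t\}$, that is, $h(t)=t$. Since $F$ contains points other than $t$ — for instance, the two non-top endpoints of any simple triod with top $t$ witness this — no self-homeomorphism can send $t$ to another point of $F$, and $F$ is not homogeneous. I do not expect any real obstacle: the whole observation is essentially a bookkeeping exercise confirming that the definitions of endpoint of an arc and of simple triod are intrinsic topological notions.
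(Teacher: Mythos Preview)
Your argument is correct and is exactly the standard reasoning the paper has in mind: each of $E(X)$, $R(X)$, $O(X)$ is defined by purely topological conditions (arcs, connectedness of complements, simple triods and their tops), hence preserved by any homeomorphism, and the uniqueness of the ramification point in a fan forces every self-homeomorphism to fix it. The paper in fact states this as an observation without proof, so there is nothing further to compare.
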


Two important examples of fans are the Cantor fan and the Lelek fan. 
The \emph{Cantor fan} is any continuum homeomorphic to $\bigcup_{x \in K} A_x$, where $K \subseteq I$ is the Cantor set, and for each $x \in K$, $A_x$ is the straight line segment in the plane from $(\nicefrac{1}{2},1)$ to $(x,0)$. More succinctly, the Cantor fan is the one-point compactification of $K \times [0,1)$, also known as the cone over $K$.

A \emph{Lelek fan} is any subcontinuum $F$ of a Cantor fan such that $F$ is a fan and $E(F)$ is dense in $F$. Charatonik proved in \cite{lelekUniqueChar} that this description uniquely specifies a continuum up to homeomorphism, i.e., any two Lelek fans are homeomorphic (see \cite{BK0}, \cite{lelekUniqueBO}, and \cite{HGH} for further characterizations).

\begin{center}
\begin{tikzpicture}[xscale=.5,yscale=.55]

\node at (4.5,7.2) {$\ $};

\draw[very thin] (4.5,7) -- (0,0);
\draw[ultra thin] (4.5,7) -- (1/27,0);
\draw[ultra thin] (4.5,7) -- (2/27,0);
\draw[very thin] (4.5,7) -- (1/9,0);
\draw[very thin] (4.5,7) -- (2/9,0);
\draw[ultra thin] (4.5,7) -- (7/27,0);
\draw[ultra thin] (4.5,7) -- (8/27,0);
\draw[very thin] (4.5,7) -- (1/3,0);
\draw[very thin] (4.5,7) -- (2/3,0);
\draw[ultra thin] (4.5,7) -- (19/27,0);
\draw[ultra thin] (4.5,7) -- (20/27,0);
\draw[very thin] (4.5,7) -- (7/9,0);
\draw[very thin] (4.5,7) -- (8/9,0);
\draw[ultra thin] (4.5,7) -- (25/27,0);
\draw[ultra thin] (4.5,7) -- (26/27,0);
\draw[very thin] (4.5,7) -- (1,0);
\draw[very thin] (4.5,7) -- (2,0);
\draw[ultra thin] (4.5,7) -- (55/27,0);
\draw[ultra thin] (4.5,7) -- (56/27,0);
\draw[very thin] (4.5,7) -- (19/9,0);
\draw[very thin] (4.5,7) -- (20/9,0);
\draw[ultra thin] (4.5,7) -- (61/27,0);
\draw[ultra thin] (4.5,7) -- (62/27,0);
\draw[very thin] (4.5,7) -- (7/3,0);
\draw[very thin] (4.5,7) -- (8/3,0);
\draw[ultra thin] (4.5,7) -- (73/27,0);
\draw[ultra thin] (4.5,7) -- (74/27,0);
\draw[very thin] (4.5,7) -- (25/9,0);
\draw[very thin] (4.5,7) -- (26/9,0);
\draw[ultra thin] (4.5,7) -- (79/27,0);
\draw[ultra thin] (4.5,7) -- (80/27,0);
\draw[very thin] (4.5,7) -- (3,0);
\draw[very thin] (4.5,7) -- (6,0);
\draw[ultra thin] (4.5,7) -- (163/27,0);
\draw[ultra thin] (4.5,7) -- (164/27,0);
\draw[very thin] (4.5,7) -- (55/9,0);
\draw[very thin] (4.5,7) -- (56/9,0);
\draw[ultra thin] (4.5,7) -- (169/27,0);
\draw[ultra thin] (4.5,7) -- (170/27,0);
\draw[very thin] (4.5,7) -- (19/3,0);
\draw[very thin] (4.5,7) -- (20/3,0);
\draw[ultra thin] (4.5,7) -- (181/27,0);
\draw[ultra thin] (4.5,7) -- (182/27,0);
\draw[very thin] (4.5,7) -- (61/9,0);
\draw[very thin] (4.5,7) -- (62/9,0);
\draw[ultra thin] (4.5,7) -- (187/27,0);
\draw[ultra thin] (4.5,7) -- (188/27,0);
\draw[very thin] (4.5,7) -- (7,0);
\draw[very thin] (4.5,7) -- (8,0);
\draw[ultra thin] (4.5,7) -- (217/27,0);
\draw[ultra thin] (4.5,7) -- (218/27,0);
\draw[very thin] (4.5,7) -- (73/9,0);
\draw[very thin] (4.5,7) -- (74/9,0);
\draw[ultra thin] (4.5,7) -- (223/27,0);
\draw[ultra thin] (4.5,7) -- (224/27,0);
\draw[very thin] (4.5,7) -- (25/3,0);
\draw[very thin] (4.5,7) -- (26/3,0);
\draw[ultra thin] (4.5,7) -- (235/27,0);
\draw[ultra thin] (4.5,7) -- (236/27,0);
\draw[very thin] (4.5,7) -- (79/9,0);
\draw[very thin] (4.5,7) -- (80/9,0);
\draw[ultra thin] (4.5,7) -- (241/27,0);
\draw[ultra thin] (4.5,7) -- (242/27,0);
\draw[very thin] (4.5,7) -- (9,0);

\begin{scope}[shift={(19,7)}]
\draw[very thin] (0,0) -- (-4.5*.31,-7*.31);
\draw[very thin] (0,0) -- (-4.5*.41+1/27*.41,-7*.41);
\draw[very thin] (0,0) -- (-4.5*.59+2/27*.59,-7*.59);
\draw[very thin] (0,0) -- (-4.5*.26+1/9*.26,-7*.26);
\draw[very thin] (0,0) -- (-4.5*.53+2/9*.53,-7*.53);
\draw[very thin] (0,0) -- (-4.5*.58+7/27*.58,-7*.58);
\draw[very thin] (0,0) -- (-4.5*.97+8/27*.97,-7*.97);
\draw[very thin] (0,0) -- (-4.5*.93+1/3*.93,-7*.93);
\draw[very thin] (0,0) -- (-4.5*.23+2/3*.23,-7*.23);
\draw[very thin] (0,0) -- (-4.5*.84+19/27*.84,-7*.84);
\draw[very thin] (0,0) -- (-4.5*.62+20/27*.62,-7*.62);
\draw[very thin] (0,0) -- (-4.5*.64+7/9*.64,-7*.64);
\draw[very thin] (0,0) -- (-4.5*.33+8/9*.33,-7*.33);
\draw[very thin] (0,0) -- (-4.5*.83+25/27*.83,-7*.83);
\draw[very thin] (0,0) -- (-4.5*.27+26/27*.27,-7*.27);
\draw[very thin] (0,0) -- (-4.5*.95+1*.95,-7*.95);
\draw[very thin] (0,0) -- (-4.5*.02+2*.02,-7*.02);
\draw[very thin] (0,0) -- (-4.5*.88+55/27*.88,-7*.88);
\draw[very thin] (0,0) -- (-4.5*.41+56/27*.41,-7*.41);
\draw[very thin] (0,0) -- (-4.5*.97+19/9*.97,-7*.97);
\draw[very thin] (0,0) -- (-4.5*.16+20/9*.16,-7*.16);
\draw[very thin] (0,0) -- (-4.5*.93+61/27*.93,-7*.93);
\draw[very thin] (0,0) -- (-4.5*.99+62/27*.99,-7*.99);
\draw[very thin] (0,0) -- (-4.5*.37+7/3*.37,-7*.37);
\draw[very thin] (0,0) -- (-4.5*.51+8/3*.51,-7*.51);
\draw[very thin] (0,0) -- (-4.5*.05+73/27*.05,-7*.05);
\draw[very thin] (0,0) -- (-4.5*.82+74/27*.82,-7*.82);
\draw[very thin] (0,0) -- (-4.5*.09+25/9*.09,-7*.09);
\draw[very thin] (0,0) -- (-4.5*.74+26/9*.74,-7*.74);
\draw[very thin] (0,0) -- (-4.5*.94+79/27*.94,-7*.94);
\draw[very thin] (0,0) -- (-4.5*.45+80/27*.45,-7*.45);
\draw[very thin] (0,0) -- (-4.5*.92+3*.92,-7*.92);
\draw[very thin] (0,0) -- (-4.5*.3+6*.3,-7*.3);
\draw[very thin] (0,0) -- (-4.5*.78+163/27*.78,-7*.78);
\draw[very thin] (0,0) -- (-4.5*.16+164/27*.16,-7*.16);
\draw[very thin] (0,0) -- (-4.5*.4+55/9*.4,-7*.4);
\draw[very thin] (0,0) -- (-4.5*.62+56/9*.62,-7*.62);
\draw[very thin] (0,0) -- (-4.5*.86+169/27*.86,-7*.86);
\draw[very thin] (0,0) -- (-4.5*.2+170/27*.2,-7*.2);
\draw[very thin] (0,0) -- (-4.5*.89+19/3*.89,-7*.89);
\draw[very thin] (0,0) -- (-4.5*.98+20/3*.98,-7*.98);
\draw[very thin] (0,0) -- (-4.5*.62+181/27*.62,-7*.62);
\draw[very thin] (0,0) -- (-4.5*.8+182/27*.8,-7*.8);
\draw[very thin] (0,0) -- (-4.5*.34+61/9*.34,-7*.34);
\draw[very thin] (0,0) -- (-4.5*.82+62/9*.82,-7*.82);
\draw[very thin] (0,0) -- (-4.5*.53+187/27*.53,-7*.53);
\draw[very thin] (0,0) -- (-4.5*.42+188/27*.42,-7*.42);
\draw[very thin] (0,0) -- (-4.5*.11+7*.11,-7*.11);
\draw[very thin] (0,0) -- (-4.5*.7+8*.7,-7*.7);
\draw[very thin] (0,0) -- (-4.5*.67+217/27*.67,-7*.67);
\draw[very thin] (0,0) -- (-4.5*.98+218/27*.98,-7*.98);
\draw[very thin] (0,0) -- (-4.5*.21+73/9*.21,-7*.21);
\draw[very thin] (0,0) -- (-4.5*.48+74/9*.48,-7*.48);
\draw[very thin] (0,0) -- (-4.5*.08+223/27*.08,-7*.08);
\draw[very thin] (0,0) -- (-4.5*.65+224/27*.65,-7*.65);
\draw[very thin] (0,0) -- (-4.5*.13+25/3*.13,-7*.13);
\draw[very thin] (0,0) -- (-4.5*.28+26/3*.28,-7*.28);
\draw[very thin] (0,0) -- (-4.5*.23+235/27*.23,-7*.23);
\draw[very thin] (0,0) -- (-4.5*.06+236/27*.06,-7*.06);
\draw[very thin] (0,0) -- (-4.5*.64+79/9*.64,-7*.64);
\draw[very thin] (0,0) -- (-4.5*.7+80/9*.7,-7*.7);
\draw[very thin] (0,0) -- (-4.5*.93+241/27*.93,-7*.93);
\draw[very thin] (0,0) -- (-4.5*.84+242/27*.84,-7*.84);
\draw[very thin] (0,0) -- (-4.5*.46+9*.46,-7*.46);

\node at (0,-8.2) {Lelek fan};
\end{scope}

\node at (4.5,-1.2) {Cantor fan};

\end{tikzpicture}
\end{center}

Observe that dendroids generally, and fans in particular, are \emph{uniquely arcwise connected}, which means that if $X$ is a dendroid and $x,y \in X$ with $x \neq y$, then there is exactly one arc in $X$ whose endpoints are $x$ and $y$. (Some such arc must exist, because $X$ is arcwise connected, but any such arc is unique because $X$ is hereditarily unicoherent.)

\begin{definition}
Let $X$ be a dendroid. Given $x,y \in X$ with $x \neq y$, let $[x,y]$ denote the unique arc in $X$ with endpoints $x$ and $y$. If $x = y$, let $[x,y] = \{x\}$.

If $t$ is the top of a fan $F$ and $e \in E(F)$, then $[t,e]$ is a \emph{blade} of $F$. The set of all blades of $F$ is denoted $\B(F)$.
\end{definition}

\begin{lemma}\label{lem:blades}
A fan $F$ is endpoint-homogeneous if and only if for any blades $B,B' \in \B(F)$, there is a homeomorphism $F \to F$ mapping $B$ onto $B'$.
\end{lemma}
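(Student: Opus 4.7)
The plan is to verify both implications by appealing to two facts already established: (i) by Observation \ref{homeo_end-points}, every self-homeomorphism of a fan $F$ fixes the top $t$, because $t$ is the unique ramification point of $F$ and ramification points are preserved by homeomorphisms; and (ii) $F$ is uniquely arcwise connected, so any arc in $F$ is determined by its pair of endpoints. A homeomorphism $h$ of $F$ sends any arc onto another arc, and therefore sends $[x,y]$ onto $[h(x),h(y)]$.

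For the forward direction, assume $F$ is endpoint-homogeneous and fix two blades $B = [t,e]$ and $B' = [t,e']$. Choose a homeomorphism $h: F \to F$ with $h(e) = e'$. By (i), $h(t) = t$, and then by (ii), $h([t,e])$ is the unique arc with endpoints $t$ and $e'$, namely $h(B) = [t,e'] = B'$.

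For the converse, let $e,e' \in E(F)$ and set $B = [t,e]$ and $B' = [t,e']$. By hypothesis there is a homeomorphism $h: F \to F$ with $h(B) = B'$. Its restriction to $B$ is a homeomorphism onto $B'$, hence sends the pair of endpoints $\{t,e\}$ of the arc $B$ to the pair of endpoints $\{t,e'\}$ of $B'$. Since $h(t) = t$ by (i), we conclude $h(e) = e'$.

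There is no real obstacle here; the lemma is essentially a bookkeeping remark, whose content is that the top of $F$ is rigidly fixed by every self-homeomorphism, so that (via unique arcwise connectedness) blades are in canonical bijection with endpoints and a homeomorphism moves one blade onto another precisely when it moves the corresponding endpoints onto one another.
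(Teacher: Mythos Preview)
Your proof is correct and follows essentially the same approach as the paper's: both directions hinge on the fact that any self-homeomorphism fixes the top $t$, so that $h([t,e]) = [t,h(e)]$ by unique arcwise connectedness. You are slightly more explicit than the paper in justifying why $h(t)=t$ (citing Observation~\ref{homeo_end-points}) and why arcs map to arcs with the expected endpoints, but the argument is the same.
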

\begin{proof}
For the ``only if'' direction, suppose $F$ is an endpoint-homogeneous fan. Given two blades $B$ and $B'$ of $F$, there are (by the definition of a blade) $e,e' \in E(F)$ such that $B = [t,e]$ and $B' = [t,e']$, where $t$ denotes the top of $F$. But then there is a homeomorphism $h: F \to F$ such that $h(e) = e'$. Because $h(t) = t$, this means $h$ maps $B = [t,e]$ onto $[h(t),h(e)] = [t,e'] = B'$. 

For the ``if'' direction, suppose that for any two blades $B,B'$ of $F$, there is a homeomorphism $F \to F$ mapping $B$ onto $B'$. Let $e,e' \in E(F)$, and fix a homeomorphism $h: F \to F$ mapping $[t,e]$ onto $[t,e']$, where $t$ denotes the top of $F$. Because $h(t) = t$, we must have $h(e) = e'$.
\end{proof}

\begin{definition}\label{def:smooth}
A fan $F$ with top $t$ is \emph{smooth} if for any convergent sequence $\seq{x_n}{n \in \N}$ of points in $F$,
$$\textstyle \lim_{n \to \infty} [t,x_n] \,=\, [t,\lim_{n \to \infty}x_n]$$
where the limit on the left is taken with respect to the Hausdorff metric.
\end{definition}

\vspace{1mm}

\begin{center}
\begin{tikzpicture}[scale=.7]
	\filldraw[black,  thick] (0,0) -- (0,3);
	
	\filldraw[black,  thick] (0,0) -- (-2,3);
	\filldraw[black,  thick] (-2,3) -- (0,5);
	\filldraw[black,  thick] (0,5) -- (2,3);
	\filldraw[black,  thick] (2,3) -- (2,0);
	
	\filldraw[black,  thick] (0,0) -- (-1,3);
	\filldraw[black,  thick] (-1,3) -- (0,4);
	\filldraw[black,  thick] (0,4) -- (1,3);
	\filldraw[black,  thick] (1,3) -- (1,0);
	
	\filldraw[black,  thick] (0,0) -- (-0.5,3);
	\filldraw[black,  thick] (-0.5,3) -- (0,3.5);
	\filldraw[black,  thick] (0,3.5) -- (0.5,3);
	\filldraw[black,  thick] (0.5,3) -- (0.5,0);
	
	\filldraw[black] (0.35,.2) circle (0.5pt);
	\filldraw[black] (0.25,.2) circle (0.5pt);
	\filldraw[black] (0.15,.2) circle (0.5pt);
	\filldraw[black] (0.35,1.2) circle (0.5pt);
	\filldraw[black] (0.25,1.2) circle (0.5pt);
	\filldraw[black] (0.15,1.2) circle (0.5pt);
	\filldraw[black] (0.35,2.2) circle (0.5pt);
	\filldraw[black] (0.25,2.2) circle (0.5pt);
	\filldraw[black] (0.15,2.2) circle (0.5pt);
	
\begin{scope}[shift={(-11,0)}]

\begin{scope}[rotate=90]
\begin{scope}[yscale=1]
\draw[thick] (0,0) -- (5,0) -- (5,-.45) -- (2.5,-.25);
\end{scope}\end{scope}
\begin{scope}[rotate=90*.7^1]
\begin{scope}[yscale=.79^1]
\draw[thick] (0,0) -- (5,0) -- (5,-.45) -- (2.5,-.25);
\end{scope}\end{scope}
\begin{scope}[rotate=90*.7^2]
\begin{scope}[yscale=.79^2]
\draw[thick] (0,0) -- (5,0) -- (5,-.45) -- (2.5,-.25);
\end{scope}\end{scope}
\begin{scope}[rotate=90*.7^3]
\begin{scope}[yscale=.79^3]
\draw[thick] (0,0) -- (5,0) -- (5,-.45) -- (2.5,-.25);
\end{scope}\end{scope}
\begin{scope}[rotate=90*.7^4]
\begin{scope}[yscale=.79^4]
\draw[thick] (0,0) -- (5,0) -- (5,-.45) -- (2.5,-.25);
\end{scope}\end{scope}
\begin{scope}[rotate=90*.7^5]
\begin{scope}[yscale=.79^5]
\draw[thick] (0,0) -- (5,0) -- (5,-.45) -- (2.5,-.25);
\end{scope}\end{scope}
\begin{scope}[rotate=90*.7^6]
\begin{scope}[yscale=.79^6]
\draw[thick] (0,0) -- (5,0) -- (5,-.45) -- (2.5,-.25);
\end{scope}\end{scope}
\draw[thick] (-.02,0) -- (5,0);

\node at (2.25,.1) {\tiny $.$};
\node at (2.25,.2) {\tiny $.$};
\node at (2.25,.3) {\tiny $.$};

\node at (4.5,.2) {\tiny $.$};
\node at (4.5,.35) {\tiny $.$};
\node at (4.5,.5) {\tiny $.$};

\node at (0,-.5) {$\ $};
\end{scope}

\node at (-4,-1.2) {Two non-smooth fans};

\end{tikzpicture}
\end{center}

We make repeated use of the following fact, a proof of which can be found in \cite[Theorem 9]{C}, \cite[Proposition 4]{CC}, or \cite[Corollary 4]{E}. 

\begin{theorem}\label{thm:CF}
A fan is smooth if and only if it is (homeomorphic to) a subcontinuum of the Cantor fan.
\end{theorem}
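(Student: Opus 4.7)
For the backward direction, I would first note that the Cantor fan $C$, identified with the cone $K \times [0,1]/(K \times \{0\})$ over the Cantor set $K$ (with top $t_C$), is itself smooth: the arc from $t_C$ to a point $x = [(k, h)]$ is (the image of) $\{k\} \times [0, h]$, and these arcs clearly depend continuously on their endpoint. Now let $F \sub C$ be a subcontinuum that is a fan with top $t_F$. I claim $t_F = t_C$: otherwise, either $t_C \notin F$, forcing $F \sub K \times (0, 1]$ to lie within a single component $\{k\} \times (0, 1]$ (since $K$ is totally disconnected) and hence to be an arc, not a fan; or $t_F \in F \setminus \{t_C\}$, in which case $C \setminus \{t_F\}$ has at most two components, while $F \setminus \{t_F\}$ must have at least three (because $t_F$ is a ramification point), a contradiction. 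Since arcs in $F$ coincide with arcs in $C$ by unique arcwise connectedness, the smoothness of $F$ is inherited from that of $C$.

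For the forward direction, let $F$ be a smooth fan with top $t$. The strategy is to build an embedding $\Phi \colon F \to C$ by parametrizing each blade of $F$ by the unit interval and simultaneously encoding the blades by points of $K$. Concretely, for each $e \in E(F)$ I would pick a homeomorphism $\alpha_e \colon [0,1] \to [t, e]$ with $\alpha_e(0) = t$ and $\alpha_e(1) = e$, and an injection $\varepsilon \colon E(F) \to K$; then set $\Phi(t) = t_C$ and $\Phi(\alpha_e(s)) = [(\varepsilon(e), s)]$. If the choices are made coherently enough that $\Phi$ is continuous, then compactness of $F$ immediately promotes $\Phi$ to a topological embedding.

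The main obstacle is securing this continuity, and it is the step where smoothness is essential. By smoothness, the map $\pi \colon F \to 2^F$, $x \mapsto [t, x]$, is continuous, and it is injective because the endpoints of an arc are topologically determined; being a continuous injection on a compact space, $\pi$ is already a topological embedding of $F$ into its own hyperspace. In particular, the restriction $e \mapsto [t, e]$ is a homeomorphism from $E(F)$ onto the blade space $\B(F) \sub 2^F$, and the closure $\closure{\B(F)} = \{[t, x] \colon x \in \closure{E(F)}\}$ is compact. The technical work is to use smoothness to exhibit this blade space as injectable into $K$ while simultaneously producing parametrizations $\alpha_e$ that vary coherently with $e$. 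A naive choice such as $\alpha_e^{-1}(x) = \mathrm{diam}([t, x])/\mathrm{diam}([t, e])$ is continuous but may fail to be injective on a blade, since blades of a smooth fan can wander in ways that leave $\mathrm{diam}([t, \cdot])$ constant on a sub-arc; handling this non-monotonicity without destroying continuity across the blade space is the principal technical challenge. A detailed argument appears in \cite{C}, \cite{CC}, and \cite{E}.
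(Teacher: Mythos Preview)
The paper does not actually prove this theorem; it simply records the statement and cites \cite[Theorem 9]{C}, \cite[Proposition 4]{CC}, and \cite[Corollary 4]{E}. Your proposal therefore goes beyond the paper's own treatment, and your forward-direction sketch correctly isolates the genuine difficulty (producing blade parametrizations that vary continuously across the blade space) before deferring to the same references.

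One remark on your backward direction: the step ``$C \setminus \{t_F\}$ has at most two components, while $F \setminus \{t_F\}$ must have at least three, a contradiction'' is not a contradiction as stated, since a subspace can have more connected components than the ambient space. You can complete the argument by noting that two of the three open arcs of a triod at $t_F$ would then lie in a single (arcwise connected) component of $C \setminus \{t_F\}$, yielding an arc in $C$ between them that avoids $t_F$ and violating unique arcwise connectedness. More simply, bypass components entirely: a triod $T \sub F \sub C$ with top $t_F$ makes $t_F$ a ramification point of $C$, and $C$ has only one. This is exactly the argument the paper itself uses later, in the proof of Theorem~\ref{thm:FC}.
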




The following definition first appeared in \cite{Connecting}. 

\begin{definition}
Let $F$ be a fan with top $t$. 
Given $X \sub [0,1]$, we say that $F$ is an \emph{$X$-endpoint generated} fan (or an $X$\emph{-EPG} fan) if for every $B \in \B(F)$, there is a homeomorphism $\phi: I \to B$ such that $\phi(0) = t$ and $\phi[X] = B \cap \closure{E(F) \setminus B}$.
\end{definition} 

For example, $n$-ods are $\0$-EPG fans, the Cantor fan is a $\{1\}$-EPG fan, and the Lelek fan is a $[0,1]$-EPG fan. 
In fact, Charatonik's characterization of the Lelek fan shows it is the only $[0,1]$-EPG smooth fan. 

\begin{theorem}\label{thm:EHimpliesEPG}
If $F$ is an endpoint-homogeneous fan, then there is some $X \sub [0,1]$ such that $F$ is an $X$-EPG fan.
\end{theorem}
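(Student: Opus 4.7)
The plan is to show that the set $X$ witnessing the $X$-EPG property can be read off from any single blade, and that endpoint-homogeneity forces every other blade to look the same.

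More precisely, I would first fix an arbitrary blade $B_0 \in \B(F)$, say $B_0 = [t,e_0]$, and choose a homeomorphism $\phi_0 \colon I \to B_0$ with $\phi_0(0) = t$ (and hence $\phi_0(1) = e_0$); this is possible since a blade is by definition an arc with $t$ as one of its endpoints. Then I would define
\[
X \,=\, \phi_0^{-1}\bigl(B_0 \cap \closure{E(F) \setminus B_0}\bigr) \,\sub\, I.
\]
The rest of the argument is to check that this single $X$ works for every blade.

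Given any other blade $B \in \B(F)$, I would apply Lemma~\ref{lem:blades} to obtain a homeomorphism $h \colon F \to F$ with $h(B_0) = B$. By Observation~\ref{homeo_end-points}, $h$ sends the unique ramification point $t$ to itself and restricts to a bijection of $E(F)$ onto $E(F)$. I would then set $\phi = h \circ \phi_0 \colon I \to B$. Clearly $\phi$ is a homeomorphism onto $B$ with $\phi(0) = h(t) = t$.

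It remains to verify $\phi[X] = B \cap \closure{E(F) \setminus B}$. Since $h$ is a self-homeomorphism of $F$,
\[
h\bigl(\closure{E(F) \setminus B_0}\bigr) \,=\, \closure{h[E(F) \setminus B_0]} \,=\, \closure{E(F) \setminus B},
\]
using $h[E(F)] = E(F)$ and $h(B_0) = B$. Combined with $h(B_0) = B$ and the fact that $h$ commutes with intersections, this gives
\[
\phi[X] \,=\, h\bigl(B_0 \cap \closure{E(F) \setminus B_0}\bigr) \,=\, B \cap \closure{E(F) \setminus B},
\]
which is exactly what is required for $F$ to be an $X$-EPG fan. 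The only ``obstacle'' is really a matter of bookkeeping: one must be sure that $h$ fixes $t$ and permutes $E(F)$, both of which are immediate from the topological invariance of the point-classes in a dendroid (Observation~\ref{homeo_end-points}).
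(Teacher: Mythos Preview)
Your proof is correct and follows essentially the same approach as the paper: fix a blade, pull back the accumulation set of endpoints to define $X$, then transport this to any other blade via a self-homeomorphism (which preserves $t$ and $E(F)$). The only cosmetic difference is that you invoke Lemma~\ref{lem:blades} to get a blade-to-blade homeomorphism whereas the paper uses endpoint-homogeneity directly, but these are equivalent.
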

\begin{proof}
Let $F$ be a fan with top $t$, fix some $B_0 \in \B(F)$, and let $\phi: I \to B$ be a homeomorphism with $\phi(0) = t$. 
Let $X = \phi^{-1}\big(B_0 \cap \closure{E(F) \setminus B_0}\big)$. 
Given any $B \in \B(F)$, there is a homeomorphism $h: F \to F$ that maps $B_0$ onto $B$. But then $h \circ \phi$ is a homeomorphism $I \to B$ with $h \circ \phi(t) = t$, and, because $h$ fixes $E(F)$, $h \circ \phi[X] = h[B_0 \cap \closure{E(F) \setminus B_0}] = B \cap \closure{E(F) \setminus B}$. Hence $F$ is $X$-EPG.
\end{proof}

This theorem shows that to understand endpoint-homogeneous fans, it is necessary first to understand $X$-EPG fans. 
This raises several questions:
\begin{itemize}
\item[$\circ$] For which $X \sub [0,1]$ is there an $X$-EPG fan?
\item[$\circ$] For which $X \sub [0,1]$ can such a fan be endpoint-homogeneous?
\item[$\circ$] If there is an $X$-EPG fan for some nonempty $X \sub [0,1]$, is it unique up to homeomorphism?
\end{itemize}
As mentioned in the introduction, a significant part of this paper is devoted to fully answering the first two of these questions. 
We record for now two basic observations in this direction. 

\begin{observation}\label{S_F_closed}
If there is an $X$-EPG fan, then $X$ is closed.
\end{observation}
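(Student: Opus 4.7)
The plan is very short: unpack the definition and use that continuous preimages of closed sets are closed. Fix an $X$-EPG fan $F$ with top $t$, pick any blade $B \in \B(F)$, and take the homeomorphism $\phi: I \to B$ from the definition, so that $\phi(0)=t$ and $\phi[X] = B \cap \closure{E(F) \setminus B}$.

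The set $\closure{E(F) \setminus B}$ is closed in $F$ by definition, hence $B \cap \closure{E(F) \setminus B}$ is closed in $B$. Since $\phi$ is a homeomorphism between $I$ and $B$, the preimage
\[
X \,=\, \phi^{-1}\!\left(B \cap \closure{E(F) \setminus B}\right)
\]
is closed in $I = [0,1]$. Done.

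There is no real obstacle here; the only substantive point is recognizing that the defining condition $\phi[X] = B \cap \closure{E(F) \setminus B}$ automatically forces $X$ to be closed, because the right-hand side is the intersection of $B$ with the closure of a subset of $F$. In fact, the same argument shows a bit more: if $F$ is $X$-EPG then $X$ must contain $1$ as soon as some endpoint of $F$ other than the one that generates $B$ lies in $B$; and $0 \notin X$ automatically, since $t \notin \closure{E(F) \setminus B}$ (the top of $F$ is not an endpoint and is isolated from $E(F)$ by any small simple triod around it) — but these refinements are not needed for the bare statement of the observation.
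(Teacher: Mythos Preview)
Your main argument is correct and is exactly the intended one: the paper records this as an observation without proof, and the one-line justification via $X = \phi^{-1}\big(B \cap \closure{E(F)\setminus B}\big)$ is precisely what is meant.

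However, your closing parenthetical remark is wrong and you should delete it. You claim that ``$0 \notin X$ automatically, since $t \notin \closure{E(F)\setminus B}$.'' This is false: the top of a fan can certainly lie in the closure of the endpoint set. The star (the one-point compactification of countably many copies of $(0,1]$) is a $\{0\}$-EPG fan, and there the endpoints accumulate at $t$. More generally, the paper goes on to construct $X$-EPG smooth fans for \emph{every} closed $X \subseteq [0,1]$ with $0 \in X$ and $1 \notin X$; these would all be impossible if your claim held. The error is in the assertion that $t$ is ``isolated from $E(F)$ by any small simple triod around it'': a small neighborhood of $t$ need not be a simple triod, and in any fan with infinitely many blades every neighborhood of $t$ meets infinitely many blades, hence can meet $E(F)$. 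Your other aside (about $1 \in X$) is also off: in a fan, a blade $B=[t,e]$ contains exactly one endpoint of $F$, namely $e$, so the hypothesis ``some endpoint of $F$ other than the one that generates $B$ lies in $B$'' is never satisfied.
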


\begin{observation}\label{obs:EE}
Let $X$ and $Y$ be closed subsets of $I$. The following statements are equivalent:
\begin{enumerate}
\item There is an order-preserving homeomorphism $h:I\to I$ such that $h[X] = Y$.
\item Some $X$-EPG fan is a $Y$-EPG fan.
\item Every $X$-EPG fan is a $Y$-EPG fan.
\end{enumerate}
\end{observation}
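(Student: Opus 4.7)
The plan is to establish the cycle of implications $(1) \Rightarrow (3) \Rightarrow (2) \Rightarrow (1)$. The middle implication $(3) \Rightarrow (2)$ is trivial as long as some $X$-EPG fan exists (the only non-degenerate case), so the real content lies in the other two implications, both of which are straightforward manipulations of the parametrizations $\phi : I \to B$ guaranteed by the $X$-EPG definition.

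For $(1) \Rightarrow (3)$: Suppose $h : I \to I$ is an order-preserving homeomorphism with $h[X] = Y$, and let $F$ be an $X$-EPG fan with top $t$. Since $h$ is an order-preserving self-homeomorphism of $I$, it fixes $0$. For each blade $B \in \B(F)$, take the homeomorphism $\phi_B : I \to B$ supplied by the $X$-EPG definition. Then $\phi_B \circ h^{-1} : I \to B$ is again a homeomorphism sending $0$ to $t$, and it carries $Y$ to $\phi_B[X] = B \cap \closure{E(F) \setminus B}$, witnessing that $F$ is a $Y$-EPG fan.

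For $(2) \Rightarrow (1)$: Suppose some fan $F$ with top $t$ is simultaneously $X$-EPG and $Y$-EPG. Fix any blade $B \in \B(F)$, and let $\phi_X, \phi_Y : I \to B$ be parametrizations witnessing the two EPG properties; both send $0$ to $t$, and both carry their distinguished subsets onto the same set $B \cap \closure{E(F) \setminus B}$. Then $h = \phi_Y^{-1} \circ \phi_X$ is a self-homeomorphism of $I$ fixing $0$, hence order-preserving, and by construction $h[X] = Y$.

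The main obstacle is really just a small subtlety that appears in $(2) \Rightarrow (1)$: one needs that any self-homeomorphism of $I$ fixing $0$ is automatically order-preserving. This holds because any self-homeomorphism of $I$ is monotone (being a continuous bijection of an interval) and an order-reversing one would send $0$ to $1$. Once this is in hand, the whole observation reduces to routine composition of the parametrizations given by the $X$-EPG and $Y$-EPG definitions.
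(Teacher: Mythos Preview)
The paper states this as an observation without proof, treating it as routine; your argument is the natural one and is essentially complete. The only wrinkle worth flagging is the implication $(3)\Rightarrow(2)$: as you note parenthetically, this requires that some $X$-EPG fan exists, and the observation as stated in the paper does not explicitly assume this. If no $X$-EPG fan exists then $(3)$ is vacuously true while $(2)$ is false (and $(1)$ may well hold, e.g.\ when $X=Y$), so strictly speaking the equivalence needs this existence hypothesis. In the paper's applications the relevant fans always exist, so this is a harmless implicit assumption rather than a flaw in your reasoning.
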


In light of this observation, we define the following topological invariant.

\begin{definition}
Two subsets of $[0,1]$ are \emph{equivalently embedded} if there is an order-preserving homeomorphism $I \to I$ mapping one onto the other. This is an equivalence relation on subsets of $I$. 

If $X \sub I$ and $F$ is an $X$-EPG fan, we define $EPG(F)$ to be the equivalence class of $X$ under the ``equivalently embedded'' relation.
\end{definition}

Abusing notation, we will write $EPG(F) = X$ when $F$ is an $X$-EPG fan. But it is important to bear in mind that the property of being $X$-EPG does not depend specifically on $X$, but rather on how $X$ is embedded in $[0,1]$. 

Note that homeomorphic subsets of $I$ are not necessarily equivalently embedded. 
For example, $X = \{0\}$, $Y = \{\nicefrac{1}{2}\}$, and $Z = \{1\}$ are all homeomorphic to each other, but not equivalently embedded in $I$. As we shall see, there are $X$-EPG and $Z$-EPG smooth fans, but they look very different (the former have countably many blades and the latter uncountably many, for example); and there are no $Y$-EPG smooth fans at all. 

%
%

We make use in several places of Brouwer's characterization of the
Cantor space: up to homeomorphism, the Cantor space is the unique nonempty zero-dimensional compact metrizable space containing no isolated points 
(see, e.g., \cite[Theorem 7.14, page 109]{Nadler}).

\section{Smooth fans from combs}\label{section_gen}

This section presents a general way of describing smooth fans, which we later use to build endpoint-homogeneous, $X$-EPG smooth fans for every $X$ such that an $X$-EPG smooth fan exists. 
This technique has been used elsewhere, e.g. by Aarts and Oversteegen to study the Lelek fan in \cite{AO}. 
We present it here nonetheless, in order to keep our paper reasonably self-contained. 
Several of the proofs in this section have been omitted on the grounds that they are routine exercises in general topology.

\begin{definition}
Let $\pi_1$ and $\pi_2$ denote the natural coordinate projection maps $I \times I \to I$; that is, $\pi_1(x,y) = x$ and $\pi_2(x,y) = y$ whenever $(x,y) \in I^2$. 

Furthermore, if $C \sub I^2$ and $y \in I$, then $C_y = \set{x}{(x,y) \in C} = \pi_2^{-1}(y) \cap C$.
\end{definition}

\begin{definition}\label{def:comb}
Let $K \sub [0,1]$ denote the Cantor set. A \emph{comb over $K$} (or, in context, simply a \emph{comb}) is a closed subset $C \sub I^2$ such that
\begin{enumerate}
	\item $C_0 = I$,
	\item $C_y$ is a subset of the Cantor set for all $y \in (0,1]$, 
	\item if $y_1<y_2$ then $C_{y_1} \supseteq C_{y_2}$, and
	\item $|C_y| > 2$ for some $y \in (0,1]$. 
\end{enumerate}
Given a comb $C$, let $F_C$ denote the space obtained by collapsing $[0,1] \times \{0\}$ to a point. This space is called the \emph{fan of the comb $C$}.
\end{definition} 

Formally, we take $F_C = \{t\} \cup C \setminus [0,1] \times \{0\}$. In other words, the points of $F_C$ are literally equal to the points of $C$ with the $X$-axis removed, plus one extra point $t$, the top of the fan. The natural quotient map is
$$q_C(z) \,=\, \begin{cases}
z &\text{ if } \pi_2(z) > 0, \\
t &\text{ if } \pi_2(z) = 0.
\end{cases}$$

\begin{center}
\begin{tikzpicture}[xscale=.4,yscale=.33]

\node at (4.5,.2) {$\ $};
\node at (4.5,-7.2) {$\ $};

\draw[->] (-7,-3.5) -- (-2,-3.5);
\node at (-4.5,-2.75) {\footnotesize $q_C$};

\draw[very thin] (4.5,-7) -- (0,0);
\draw[ultra thin] (4.5,-7) -- (1/27,0);
\draw[ultra thin] (4.5,-7) -- (2/27,0);
\draw[very thin] (4.5,-7) -- (1/9,0);
\draw[very thin] (4.5,-7) -- (2/9,0);
\draw[ultra thin] (4.5,-7) -- (7/27,0);
\draw[ultra thin] (4.5,-7) -- (8/27,0);
\draw[very thin] (4.5,-7) -- (1/3,0);
\draw[very thin] (4.5,-7) -- (2/3,0);
\draw[ultra thin] (4.5,-7) -- (19/27,0);
\draw[ultra thin] (4.5,-7) -- (20/27,0);
\draw[very thin] (4.5,-7) -- (7/9,0);
\draw[very thin] (4.5,-7) -- (8/9,0);
\draw[ultra thin] (4.5,-7) -- (25/27,0);
\draw[ultra thin] (4.5,-7) -- (26/27,0);
\draw[very thin] (4.5,-7) -- (1,0);
\draw[very thin] (4.5,-7) -- (2,0);
\draw[ultra thin] (4.5,-7) -- (55/27,0);
\draw[ultra thin] (4.5,-7) -- (56/27,0);
\draw[very thin] (4.5,-7) -- (19/9,0);
\draw[very thin] (4.5,-7) -- (20/9,0);
\draw[ultra thin] (4.5,-7) -- (61/27,0);
\draw[ultra thin] (4.5,-7) -- (62/27,0);
\draw[very thin] (4.5,-7) -- (7/3,0);
\draw[very thin] (4.5,-7) -- (8/3,0);
\draw[ultra thin] (4.5,-7) -- (73/27,0);
\draw[ultra thin] (4.5,-7) -- (74/27,0);
\draw[very thin] (4.5,-7) -- (25/9,0);
\draw[very thin] (4.5,-7) -- (26/9,0);
\draw[ultra thin] (4.5,-7) -- (79/27,0);
\draw[ultra thin] (4.5,-7) -- (80/27,0);
\draw[very thin] (4.5,-7) -- (3,0);
\draw[very thin] (4.5,-7) -- (6,0);
\draw[ultra thin] (4.5,-7) -- (163/27,0);
\draw[ultra thin] (4.5,-7) -- (164/27,0);
\draw[very thin] (4.5,-7) -- (55/9,0);
\draw[very thin] (4.5,-7) -- (56/9,0);
\draw[ultra thin] (4.5,-7) -- (169/27,0);
\draw[ultra thin] (4.5,-7) -- (170/27,0);
\draw[very thin] (4.5,-7) -- (19/3,0);
\draw[very thin] (4.5,-7) -- (20/3,0);
\draw[ultra thin] (4.5,-7) -- (181/27,0);
\draw[ultra thin] (4.5,-7) -- (182/27,0);
\draw[very thin] (4.5,-7) -- (61/9,0);
\draw[very thin] (4.5,-7) -- (62/9,0);
\draw[ultra thin] (4.5,-7) -- (187/27,0);
\draw[ultra thin] (4.5,-7) -- (188/27,0);
\draw[very thin] (4.5,-7) -- (7,0);
\draw[very thin] (4.5,-7) -- (8,0);
\draw[ultra thin] (4.5,-7) -- (217/27,0);
\draw[ultra thin] (4.5,-7) -- (218/27,0);
\draw[very thin] (4.5,-7) -- (73/9,0);
\draw[very thin] (4.5,-7) -- (74/9,0);
\draw[ultra thin] (4.5,-7) -- (223/27,0);
\draw[ultra thin] (4.5,-7) -- (224/27,0);
\draw[very thin] (4.5,-7) -- (25/3,0);
\draw[very thin] (4.5,-7) -- (26/3,0);
\draw[ultra thin] (4.5,-7) -- (235/27,0);
\draw[ultra thin] (4.5,-7) -- (236/27,0);
\draw[very thin] (4.5,-7) -- (79/9,0);
\draw[very thin] (4.5,-7) -- (80/9,0);
\draw[ultra thin] (4.5,-7) -- (241/27,0);
\draw[ultra thin] (4.5,-7) -- (242/27,0);
\draw[very thin] (4.5,-7) -- (9,0);

\begin{scope}[shift={(-18,0)}]
\draw[very thin] (0,-7) -- (0,0);
\draw[ultra thin] (1/27,-7) -- (1/27,0);
\draw[ultra thin] (2/27,-7) -- (2/27,0);
\draw[very thin] (1/9,-7) -- (1/9,0);
\draw[very thin] (2/9,-7) -- (2/9,0);
\draw[ultra thin] (7/27,-7) -- (7/27,0);
\draw[ultra thin] (8/27,-7) -- (8/27,0);
\draw[very thin] (1/3,-7) -- (1/3,0);
\draw[very thin] (2/3,-7) -- (2/3,0);
\draw[ultra thin] (19/27,-7) -- (19/27,0);
\draw[ultra thin] (20/27,-7) -- (20/27,0);
\draw[very thin] (7/9,-7) -- (7/9,0);
\draw[very thin] (8/9,-7) -- (8/9,0);
\draw[ultra thin] (25/27,-7) -- (25/27,0);
\draw[ultra thin] (26/27,-7) -- (26/27,0);
\draw[very thin] (1,-7) -- (1,0);
\draw[very thin] (2,-7) -- (2,0);
\draw[ultra thin] (55/27,-7) -- (55/27,0);
\draw[ultra thin] (56/27,-7) -- (56/27,0);
\draw[very thin] (19/9,-7) -- (19/9,0);
\draw[very thin] (20/9,-7) -- (20/9,0);
\draw[ultra thin] (61/27,-7) -- (61/27,0);
\draw[ultra thin] (62/27,-7) -- (62/27,0);
\draw[very thin] (7/3,-7) -- (7/3,0);
\draw[very thin] (8/3,-7) -- (8/3,0);
\draw[ultra thin] (73/27,-7) -- (73/27,0);
\draw[ultra thin] (74/27,-7) -- (74/27,0);
\draw[very thin] (25/9,-7) -- (25/9,0);
\draw[very thin] (26/9,-7) -- (26/9,0);
\draw[ultra thin] (79/27,-7) -- (79/27,0);
\draw[ultra thin] (80/27,-7) -- (80/27,0);
\draw[very thin] (3,-7) -- (3,0);
\draw[very thin] (6,-7) -- (6,0);
\draw[ultra thin] (163/27,-7) -- (163/27,0);
\draw[ultra thin] (164/27,-7) -- (164/27,0);
\draw[very thin] (55/9,-7) -- (55/9,0);
\draw[very thin] (56/9,-7) -- (56/9,0);
\draw[ultra thin] (169/27,-7) -- (169/27,0);
\draw[ultra thin] (170/27,-7) -- (170/27,0);
\draw[very thin] (19/3,-7) -- (19/3,0);
\draw[very thin] (20/3,-7) -- (20/3,0);
\draw[ultra thin] (181/27,-7) -- (181/27,0);
\draw[ultra thin] (182/27,-7) -- (182/27,0);
\draw[very thin] (61/9,-7) -- (61/9,0);
\draw[very thin] (62/9,-7) -- (62/9,0);
\draw[ultra thin] (187/27,-7) -- (187/27,0);
\draw[ultra thin] (188/27,-7) -- (188/27,0);
\draw[very thin] (7,-7) -- (7,0);
\draw[very thin] (8,-7) -- (8,0);
\draw[ultra thin] (217/27,-7) -- (217/27,0);
\draw[ultra thin] (218/27,-7) -- (218/27,0);
\draw[very thin] (73/9,-7) -- (73/9,0);
\draw[very thin] (74/9,-7) -- (74/9,0);
\draw[ultra thin] (223/27,-7) -- (223/27,0);
\draw[ultra thin] (224/27,-7) -- (224/27,0);
\draw[very thin] (25/3,-7) -- (25/3,0);
\draw[very thin] (26/3,-7) -- (26/3,0);
\draw[ultra thin] (235/27,-7) -- (235/27,0);
\draw[ultra thin] (236/27,-7) -- (236/27,0);
\draw[very thin] (79/9,-7) -- (79/9,0);
\draw[very thin] (80/9,-7) -- (80/9,0);
\draw[ultra thin] (241/27,-7) -- (241/27,0);
\draw[ultra thin] (242/27,-7) -- (242/27,0);
\draw[very thin] (9,-7) -- (9,0);

\draw (-.008,-7) -- (9.008,-7);
\end{scope}

\end{tikzpicture}
\end{center}

\begin{lemma}\label{lem:Ck}
There is a largest comb, namely $C_K = [0,1] \times \{0\} \cup K \times (0,1]$, and the fan of this comb is (homeomorphic to) the Cantor fan. 
\end{lemma}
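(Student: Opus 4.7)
The plan is, in sequence, to verify that $C_K$ meets each requirement of Definition~\ref{def:comb}, to argue that every other comb is contained in $C_K$, and then to write down an explicit homeomorphism between $F_{C_K}$ and the concrete Cantor fan $\bigcup_{x \in K} A_x$. The first two steps amount to bookkeeping. For Definition~\ref{def:comb}, I would check directly that $(C_K)_0 = [0,1]$ and $(C_K)_y = K$ for every $y \in (0,1]$, which yields conditions (1)--(3) at once, and then note that $|K| > 2$ covers (4); that $C_K$ is closed in $I^2$ follows by rewriting it as the union $([0,1]\times\{0\}) \cup (K \times [0,1])$ of two closed sets. For maximality, given any comb $C$, the conditions $C_0 = I$ and $C_y \sub K$ for $y > 0$ force $C \sub C_K$ directly.

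For the last step I would define a map $\psi : C_K \to I^2$ by
$$\psi(x,y) \,=\, \bigl( \tfrac{1-y}{2} + y x,\ 1-y \bigr).$$
This map is continuous, it collapses all of $[0,1] \times \{0\}$ to the single point $(\tfrac{1}{2},1)$, and for each fixed $x \in K$ it sends the vertical segment $\{x\}\times[0,1]$ homeomorphically onto the segment $A_x$, with $y=0$ going to the apex and $y=1$ going to the endpoint $(x,0)$. A short injectivity check -- fix the second coordinate, then the first -- confirms that the only identifications are on the fiber $[0,1]\times\{0\}$, which is exactly what $q_{C_K}$ collapses. Hence $\psi$ descends to a continuous bijection $\widetilde{\psi} : F_{C_K} \to \bigcup_{x \in K} A_x$.

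The closing move is the usual observation that a continuous bijection from a compact space to a Hausdorff space is a homeomorphism. The Cantor fan is Hausdorff as a subspace of the plane, and $F_{C_K}$ is compact Hausdorff because it is the quotient of the compact Hausdorff space $C_K$ by the closed subspace $[0,1] \times \{0\}$. The main point that needs a moment's thought -- and really the only place where one has to pause -- is this last assertion that the quotient is Hausdorff; beyond that, everything reduces to a direct calculation with the explicit formula for $\psi$.
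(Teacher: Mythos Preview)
Your proposal is correct and follows essentially the same line as the paper's proof; the paper simply asserts that $C_K$ is a comb containing every other comb and that $F_{C_K}$ is the cone over $K$ (hence the Cantor fan), whereas you carry out each of these steps explicitly, including writing down the homeomorphism to the planar model. The extra detail is fine and the argument is sound.
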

\begin{proof}
It is clear from the definition above that $C_K$ is a comb, and that every other comb is a subset of $C_K$. Furthermore, the fan of $C_K$ is the cone over $K$, i.e., the Cantor fan.
\end{proof}

\begin{theorem}\label{thm:FC}
If $C$ is a comb, then $F_C$ is a smooth fan. Conversely, if $F$ is a smooth fan then there is a comb $C \sub I^2$ such that $F_C \homeo F$.
\end{theorem}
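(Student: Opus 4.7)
The plan is to handle both directions by routing through the Cantor fan, using Lemma \ref{lem:Ck} and Theorem \ref{thm:CF}.

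For the forward direction, given a comb $C$, the inclusion $C \sub C_K$ descends through the quotient maps to realize $F_C$ as a subcontinuum of $F_{C_K}$, which is a Cantor fan by Lemma \ref{lem:Ck}. By Theorem \ref{thm:CF}, smoothness of $F_C$ will follow once I verify $F_C$ is a fan. Arcwise connectedness: given $z \in F_C \setminus \{t\}$ with lift $(x,y) \in C$ and $y > 0$, properties (1) and (3) of Definition \ref{def:comb} together with closedness of $C$ force $\{x\} \times [0,y] \sub C$, which descends to an arc from $t$ to $z$. Hereditary unicoherence is inherited from $F_{C_K}$, which is a dendroid. Property (4) ensures at least three distinct blades meet at $t$, making $t$ a ramification point; and for any other point $z = (x,y)$ with $y > 0$, a small rectangular neighborhood of $(x,y)$ in $C$ has $(x,y)$-component $\{x\} \times (y-\e, y+\e)$ (by property (3) and closedness), which is an interval---so $z$ has interval-like local structure and cannot be the top of a simple triod.

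For the reverse direction, let $F$ be a smooth fan with top $t_F$. By Theorem \ref{thm:CF}, fix an embedding $\iota : F \to F_{C_K}$. I first argue that $\iota(t_F)$ equals the top $t$ of $F_{C_K}$. By Observation \ref{homeo_end-points}, $\iota(t_F)$ is a ramification point of $\iota(F)$, so three arcs emanate from $\iota(t_F)$, pairwise meeting only at $\iota(t_F)$. But at any non-top point of $F_{C_K}$, the local analysis just described produces an interval as the relevant neighborhood component, and by pigeonhole this interval cannot accommodate three arcs sharing only the basepoint. Hence $\iota(t_F) = t$.

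Now set $C = q_{C_K}^{-1}(\iota(F)) \sub C_K$. I check it is a comb: property (1) follows from $[0,1] \times \{0\} = q_{C_K}^{-1}(t) \sub C$; (2) from $C \sub C_K$; and (4) from $\iota(F)$ having at least three blades through $t$. The crux is property (3), which uses that $\iota(F)$ is a subdendroid of $F_{C_K}$ and hence uniquely arcwise connected: if $(x, y_2) \in C$, then the unique arc in $F_{C_K}$ from $t$ to $q_{C_K}(x,y_2)$---namely the image of $\{x\} \times [0,y_2]$---must lie entirely in $\iota(F)$, so $\{x\} \times [0,y_2] \sub C$. With $C$ shown to be a comb, $F_C \homeo \iota(F) \homeo F$ is immediate, since both $F_C$ and $\iota(F)$ arise as the quotient of $C$ that collapses $[0,1] \times \{0\}$ to a point. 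The main technical obstacle is the reverse direction: verifying property (3) and the preliminary identification $\iota(t_F) = t$, both of which rely on the local description of $F_{C_K}$ away from the top and on unique arcwise connectedness. The forward direction is essentially bookkeeping once we observe that $F_C$ sits inside $F_{C_K}$.
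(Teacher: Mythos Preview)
Your proposal is correct and follows essentially the same approach as the paper: both directions route through the Cantor fan via Lemma~\ref{lem:Ck} and Theorem~\ref{thm:CF}, with $F_C$ realized as a subcontinuum of $F_{C_K}$ in the forward direction and $C$ taken as the preimage $q_{C_K}^{-1}(\iota(F))$ in the reverse direction. The only notable differences are in level of detail: where the paper says ``it is not difficult to check that $C$ is a comb'' in the reverse direction, you spell out the verification of property~(3) via unique arcwise connectedness and explicitly argue that $\iota(t_F) = t$; and in the forward direction, the paper handles ``no other ramification point'' more cleanly by observing that a triod in $F_C$ is already a triod in the Cantor fan (forcing its top to be $t$), whereas your local-component argument is correct but more laborious.
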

\begin{proof}
To prove the first assertion of the theorem, let $C$ be a comb. Then $C$ is a closed subset of $C_K$ (the maximal comb described in the previous lemma), and this implies that $F_C$ is a closed subset of the Cantor fan. Thus if $F_C$ is a fan, then it must be a smooth fan.

To check that $F_C$ is a fan, first let us check that it is a continuum. As a closed subspace of the Cantor fan, it is clearly compact and metrizable. 
We claim $F_C$ is also path-connected. Let $z \in F_C$ with $z \neq t$. Note that $z = (x,y) \in C$ for some $y \in (0,1]$. By condition $(3)$ in the definition of a comb, $\{x\} \times [0,y] \sub C$. In $F_C$, this set is an arc with endpoints $t$ and $z$. This means $F_C$ is path-connected, because if $z,z' \in F_C$, then there is an arc connecting $z$ to $t$ and an arc connecting $t$ to $z'$. 

Observe that $F_C$ is hereditarily unicoherent, because it is a closed subset of the Cantor fan, which is hereditarily unicoherent. Together with the previous paragraph, this means $F_C$ is a dendroid.

By requirement $(4)$ in the definition of a comb, there is some $y > 0$ with $|C_y| > 2$. If $z_1$, $z_2$, and $z_3$ are distinct points in $C_y$, then 
$$[0,1] \times \{0\} \cup \{z_1\} \times (0,1] \cup \{z_2\} \times (0,1] \cup \{z_3\} \times (0,1]$$
collapses to a triod in $F_C$ with top $t$. Hence $t$ is a ramification point of $F_C$. There are no other ramification points of $F_C$, because if $t'$ were a ramification point of $F_C$ (i.e., it is the top of a triod $T \sub F_C$), then it would be a ramification point of the Cantor fan containing $F_C$; but the Cantor fan has only one ramification point.

Thus $F_C$ is a fan, and as we noted earlier, this means it is a smooth fan. 

For the second assertion of the theorem, suppose $F$ is a smooth fan. By Theorem~\ref{thm:CF}, $F$ is a subcontinuum of the Cantor fan, which we may (and do) view as the fan of the comb $C_K$ described in the previous lemma. Let $C = (F \cap C_K \setminus \{t\}) \cup [0,1] \times \{0\}$ (where $t$ denotes the top of $C_K$). It is not difficult to check that $C$ is a comb over $K$, and the fan of this comb is $F$.
\end{proof}

Observe that every comb is in fact a dendroid. In particular, we may (and do) use the notation $[z,z']$ to denote the unique arc in a comb $C$ containing $z$ and $z'$ as endpoints.

\begin{definition}
Let $C$ be a comb, and suppose that $(x,y) \in C$ for some $y > 0$, but $(x,y') \notin C$ for any $y' > y$. Then $[(x,0),(x,y)]$ is called a \emph{blade} of $C$, specifically the \emph{blade of $C$ at $x$}, and $(x,y)$ is called the \emph{tip} of the blade. If $C$ has a blade at some $x \in K$, then the blade at $x$ is denoted $B_x$.

The set of all blades of $C$ is denoted $\B(C)$, and the set of all points that are the tip of some blade is denoted $\mathcal E(C)$. 
\end{definition}

Observe that $|\B(C)| \geq 3$, for any comb $C$, by part $(4)$ of the definition of a comb. 
Of course combs have teeth, not blades, but the following lemma shows why we have chosen this terminology.

\begin{lemma}\label{setA_endpoints_gen}
Let $C \subseteq I^2$ be a comb. Then
\begin{enumerate}
\item If $B \in \B(C)$ then $q_C[B] \in \B(F_C)$, and if $B' \in \B(F_C)$ then there is some $B \in \B(C)$ with $q_C[B] = B'$.
\item $\mathcal E(C) = E(F_C)$.
\end{enumerate}
\end{lemma}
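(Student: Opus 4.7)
The plan is to exploit the structure forced by condition (3) in the definition of a comb. For each $x \in K$, the fiber $C \cap (\{x\} \times I)$ is closed and downward-saturated in the second coordinate, so it equals $\{x\} \times [0, h(x)]$ with $h(x) = \max\set{y}{(x,y) \in C}$. Consequently the blades of $C$ are precisely the vertical segments $B_x = \{x\} \times [0, h(x)]$ with $h(x) > 0$, and $\mathcal E(C) = \set{(x, h(x))}{x \in K,\, h(x) > 0}$. Since $F_C$ is a subcontinuum of the Cantor fan (by the proof of Theorem~\ref{thm:FC}), I can leverage the simple description of arcs there: any arc in the Cantor fan either lies inside a single straight blade or is the union of two sub-blades meeting at the top point $t$.

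For part (1), if $B = B_x \in \B(C)$, then $q_C[B] = \{t\} \cup \{x\} \times (0, h(x)]$ is an arc from $t$ to $(x, h(x))$ in $F_C$. I would check that $(x, h(x)) \in E(F_C)$ by taking an arbitrary arc $A \sub F_C$ containing $(x, h(x))$ and applying the arc dichotomy: either $A$ lies inside the vertical blade $q_C[B_x]$, in which case $(x, h(x))$ is forced to be a topological endpoint of $A$ because nothing in $F_C$ sits above it vertically, or $A$ passes through $t$ and the same maximality forces $(x, h(x))$ to be one of the two extreme points of $A$. Thus $q_C[B]$ is a blade of $F_C$. Conversely, given $B' \in \B(F_C)$, write $B' = [t,e]$ with $e \in E(F_C)$. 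Because $t$ is the unique ramification point and is not an endpoint, $e = (x,y)$ with $y > 0$, and clearly $y \leq h(x)$. If $y < h(x)$, then the arc $q_C[\{x\} \times [0, h(x)]]$ would contain $e$ in its interior, contradicting $e \in E(F_C)$. So $y = h(x)$ and $B' = q_C[B_x]$.

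Part (2) drops out immediately from this analysis: the forward inclusion $\mathcal E(C) \sub E(F_C)$ is exactly the endpoint verification in the first half of (1), and the reverse inclusion $E(F_C) \sub \mathcal E(C)$ is the content of the second half, where each $e \in E(F_C)$ is identified with the tip $(x, h(x))$ of some $B_x$. The only subtle ingredient is the arc-dichotomy in $F_C$, and the main obstacle is simply being precise about what arcs in a subcontinuum of the Cantor fan look like; once this is in hand, both claims are essentially bookkeeping driven by the maximality of $h(x)$ and the fact that the quotient map $q_C$ collapses only the $X$-axis.
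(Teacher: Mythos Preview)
The paper omits this proof entirely, remarking that several proofs in Section~3 ``have been omitted on the grounds that they are routine exercises in general topology.'' Your proposal correctly supplies the details: the key observations---that each vertical fiber of $C$ is $\{x\}\times[0,h(x)]$ by condition~(3) and closedness, that arcs in a subcontinuum of the Cantor fan either lie in a single vertical blade or pass through $t$, and that maximality of $h(x)$ forces any $e\in E(F_C)$ to be a tip---are exactly the right ingredients, and your argument is sound.
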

%

In other words, blades of $C$ correspond naturally to blades of $F_C$, and the tips of those blades correspond naturally (in fact, are equal to) the endpoints of $F_C$. 
Note that $\mathcal E(C)$ and $E(C)$ may not exactly be equal, but they are nearly so: $\mathcal E(C) \sub E(C) \sub \mathcal E(C) \cup \{(0,0),(1,0)\}$. In other words, it is possible that $(0,0)$ and $(1,0)$ are endpoints of $C$, but these two points are the only possible difference between $E(C)$ and $\mathcal E(C)$. Whether these points are in $E(C)$ depends on whether $C$ has blades at $0$ or $1$. 

\begin{observation}\label{obs:Blades}
$\pi_1[\mathcal E(C)] \,=\, \set{x \in I}{C \text{ has a blade at }x}$.
\end{observation}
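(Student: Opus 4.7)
The statement is essentially a definitional unpacking, so the plan is to verify the two inclusions directly; there is no deep content. The definitions give that the blade of $C$ at $x$, when it exists, is an arc of the form $[(x,0),(x,y)]$ with tip $(x,y)$, and $\mathcal E(C)$ is precisely the set of such tips.

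For the inclusion $\pi_1[\mathcal E(C)] \subseteq \set{x \in I}{C \text{ has a blade at }x}$, I would pick an arbitrary $x \in \pi_1[\mathcal E(C)]$, write $x = \pi_1(z)$ with $z \in \mathcal E(C)$, and invoke the definition of $\mathcal E(C)$ to get that $z$ is the tip of some blade. That blade has the form $[(x',0),(x',y)]$ with $z = (x',y)$ and $y>0$, whence $x' = x$ and $C$ has a blade at $x$. For the reverse inclusion, I would assume $C$ has a blade at $x$; the blade $B_x$ is then $[(x,0),(x,y)]$ for some $y>0$, and its tip $(x,y)$ lies in $\mathcal E(C)$ by definition, so $x = \pi_1((x,y)) \in \pi_1[\mathcal E(C)]$.

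There is no obstacle; the only thing to be careful about is that the first coordinate of a tip genuinely coincides with the index at which the blade sits, which is immediate from the way blades are defined vertically above each $x$.
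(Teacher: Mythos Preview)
Your proposal is correct. The paper states this as an observation without proof, treating it as an immediate consequence of the definitions, and your direct unpacking of the two inclusions is exactly the reasoning the paper leaves implicit.
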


We close this section with a lemma roughly stating that in order for $F_C$ to be an $X$-EPG fan (for some given $X \sub [0,1]$), the comb $C$ must have an analogous property. 

\begin{lemma}\label{setA_fancyA_EPG}
	Let $C \subseteq I^2$ be a comb, and let $X \sub I$.
	\begin{enumerate}
		\item If $0 \notin X$, then $F_C$ is an $X$-EPG fan if and only if $\closure{E(C)} \cap I \times \{0\} = \0$, and for each $B \in \B(C)$ there is a homeomorphism $\phi_B: I \to B$ such that $\phi$ maps $1$ to the tip of $B$, and $\phi[X] = B \cap \closure{E(C) \setminus B}$.
		\item If $0 \in X$, then $F_C$ is an $X$-EPG fan if and only if $\closure{E(C)} \cap I \times \{0\} \neq \0$, and for each $B \in \B(C)$ there is a homeomorphism $\phi_B: I \to B$ such that $\phi$ maps $1$ to the tip of $B$, and $\phi[X \setminus \{0\}] = B \cap \closure{E(C) \setminus B} \setminus (I \times \{0\})$.
	\end{enumerate}
\end{lemma}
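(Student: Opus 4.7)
The plan is to translate the $X$-EPG condition on the fan $F_C$ through the quotient map $q_C$ into an equivalent condition on the comb $C$. The key initial observation, from Lemma~\ref{setA_endpoints_gen}, is that blades of $C$ and blades of $F_C$ are in natural bijection via $B \mapsto q_C[B]$; and because $B = B_x$ meets $I \times \{0\}$ in the single point $(x,0)$, the restriction $q_C|_B$ is a continuous bijection from the compact arc $B$ onto the Hausdorff arc $q_C[B]$, hence a homeomorphism, sending $(x,0)$ to $t$ and fixing the tip of $B$. Thus parameterizations on the two sides are interchangeable: given $\psi : I \to q_C[B]$ with $\psi(0) = t$ and $\psi(1)$ the tip, set $\phi = (q_C|_B)^{-1} \circ \psi$ to obtain a homeomorphism $I \to B$ with $\phi(0) = (x,0)$ and $\phi(1)$ the tip of $B$, as required by the lemma; and reversibly so.

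The second ingredient is that closures pass cleanly through $q_C$. Since $I \times \{0\}$ is compact and collapsed to a single point, $q_C : C \to F_C$ is a closed continuous surjection, and hence $q_C[\closure{S}] = \closure{q_C[S]}$ for every $S \sub C$. Applied with $S = \mathcal E(C) \setminus B$, and using $\mathcal E(C) = E(F_C)$, this gives
$$\closure{E(F_C) \setminus q_C[B]} \;=\; q_C\bigl[\closure{\mathcal E(C) \setminus B}\bigr],$$
with closures computed in $F_C$ and $C$ respectively. In particular, $t$ lies in the left-hand side if and only if the right-hand side meets $I \times \{0\}$, i.e., if and only if tips of $C$ accumulate onto the $x$-axis. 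This is the crucial observation linking the position of $t$ inside the ``closure of other endpoints'' on the fan side with the behavior of $\mathcal E(C)$ near $I \times \{0\}$ on the comb side.

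With these two tools, the two cases split cleanly. In Case $(1)$, the hypothesis $0 \notin X$ forces $\psi(0) = t \notin \psi[X]$, so $t$ cannot lie in $\closure{E(F_C) \setminus q_C[B]}$; by the displayed identity this means tips do not accumulate onto the axis in $C$, and combined with the fact that a point of $E(C) \setminus \mathcal E(C)$ would itself lie in $\closure{E(C)} \cap I \times \{0\}$, the condition is exactly $\closure{E(C)} \cap I \times \{0\} = \emptyset$. Transcribing $\psi[X] = q_C[B] \cap \closure{E(F_C) \setminus q_C[B]}$ blade by blade through $(q_C|_B)^{-1}$ then yields $\phi[X] = B \cap \closure{E(C) \setminus B}$. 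Case $(2)$, with $0 \in X$, runs in parallel: $t \in \psi[X]$ now forces $\closure{E(C)} \cap I \times \{0\} \neq \emptyset$, and on the comb side one must subtract off the contribution of the base point $(x,0)$, which $q_C$ crushes to $t$; this produces the versions featuring $X \setminus \{0\}$ and $\setminus (I \times \{0\})$ stated in the lemma.

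I expect the main obstacle to be the bookkeeping around the $x$-axis. Taking $q_C^{-1}$ of a subset of $F_C$ that contains $t$ automatically drags in all of $I \times \{0\}$, so one must carefully distinguish three possible contributions to the comb-side closure: the base point $(x,0)$ of the blade under consideration, limit points of tips from $\mathcal E(C) \setminus B$, and the corner points $(0,0), (1,0)$ which may sit in $E(C) \setminus \mathcal E(C)$. Once these contributions are separated, verifying that the stated set equalities hold on every blade simultaneously is a routine check, and the equivalences in $(1)$ and $(2)$ amount to a rephrasing of the definition of $X$-EPG via the homeomorphisms $q_C|_B$.
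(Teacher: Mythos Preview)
The paper omits this proof entirely (it is one of the ``routine exercises in general topology'' announced at the start of Section~\ref{section_gen}), so there is no paper argument to compare against. Your outline is the natural one and is essentially correct: pass between blades of $C$ and blades of $F_C$ via the homeomorphism $q_C|_B$, use that $q_C$ is a closed map to transport closures, and observe that $t\in\closure{E(F_C)\setminus q_C[B]}$ if and only if tips accumulate on $I\times\{0\}$ in $C$.

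There is one genuine wrinkle you gloss over, and it is not your fault: the lemma as printed uses $E(C)$ rather than $\mathcal E(C)$, and with that wording the forward implication in part~(1) is literally false. Take $C=(I\times\{0\})\cup\bigl((K\cap[\nicefrac{1}{3},1])\times I\bigr)$: this is a comb whose fan $F_C$ is a Cantor fan, hence $\{1\}$-EPG with $0\notin\{1\}$, yet $(0,0)\in E(C)\cap(I\times\{0\})$ because there is no blade at $0$. Your sentence ``combined with the fact that a point of $E(C)\setminus\mathcal E(C)$ would itself lie in $\closure{E(C)}\cap I\times\{0\}$, the condition is exactly $\closure{E(C)}\cap I\times\{0\}=\emptyset$'' is where this bites: you have correctly shown $\closure{\mathcal E(C)}\cap(I\times\{0\})=\emptyset$, but nothing in the $X$-EPG hypothesis forces $E(C)=\mathcal E(C)$, so you cannot upgrade to $E(C)$.

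The fix is to prove the lemma with $\mathcal E(C)$ in place of $E(C)$ throughout; your argument does exactly that, and every subsequent use of this lemma in the paper (Lemma~\ref{lem:HomeoCantor}, Theorems~\ref{thm:BCT1} and the one following) only needs the $\mathcal E(C)$ version. Alternatively, one may note that the comb produced by Theorem~\ref{thm:FC} can always be chosen with blades at both $0$ and $1$, in which case $E(C)=\mathcal E(C)$ and the distinction evaporates.
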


\section{Sets that do not work}

In this section and the next, we exactly characterize the sets $X \sub [0,1]$ such that there is an $X$-EPG smooth fan. As mentioned in the introduction, the complete list is:
\begin{itemize}
\item[$\circ$] $X = \0$, $\{1\}$, or $[0,1]$, or
\item[$\circ$] $X$ is a closed subset of $[0,1]$ with $0 \in X$ and $1 \notin X$, or
\item[$\circ$] $X$ is a closed subset of $[0,1]$ with $0,1 \in X$, and $1$ is an isolated point of $X$ (i.e., a set of the second type plus the point $1$).
\end{itemize} 
In this section we prove that there is no $X$-EPG fan for any $X \sub [0,1]$ not appearing on this list. In the next section we construct examples of $X$-EPG fans for every $X$ that does appear on this list. 

The following two lemmas rely on two well-known facts from general topology: $(1)$ any relatively discrete subset of a compact metric space is countable, and $(2)$ any nonempty, closed subset of the Cantor set $K$ with no isolated points is homeomorphic to $K$.

\begin{theorem}\label{thm:Countable}
Let $X \sub [0,1]$, and suppose that $F$ is an $X$-EPG fan. \linebreak If $1 \notin X$, then $E(F)$ is countable.
\end{theorem}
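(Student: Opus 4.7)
The plan is to show that every endpoint of $F$ is isolated within $E(F)$, and then invoke the standard fact that a relatively discrete subset of a compact metric space must be countable.

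First I would fix an arbitrary endpoint $e \in E(F)$, let $t$ be the top of $F$, and consider the blade $B = [t,e]$. By the $X$-EPG hypothesis, there exists a homeomorphism $\phi: I \to B$ with $\phi(0) = t$ and $\phi[X] = B \cap \closure{E(F) \setminus B}$. Since $\phi$ is a homeomorphism sending $0$ to $t$ and $B$ to itself, the other endpoint of the arc $B$ must be $\phi(1)$, and this forces $\phi(1) = e$. Now I would use the assumption $1 \notin X$: it implies $\phi(1) = e \notin \phi[X] = B \cap \closure{E(F) \setminus B}$, and because $e$ is in $B$, this means simply $e \notin \closure{E(F) \setminus B}$.

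The next step is to translate this into isolation of $e$ in $E(F)$. Since $F$ is metric (in particular Hausdorff and first countable), there is an open neighborhood $U$ of $e$ in $F$ disjoint from $E(F) \setminus B$, so $U \cap E(F) \sub B$. But the only endpoint of $F$ lying in the blade $B = [t,e]$ is $e$ itself (all other points of $B$ are either the top or ordinary points of $F$, which cannot be endpoints by Observation~\ref{homeo_end-points}). Thus $U \cap E(F) = \{e\}$, showing $e$ is isolated in $E(F)$.

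Since $e$ was arbitrary, $E(F)$ is a discrete subspace of the compact metric space $F$. The first fact cited just before the theorem then gives that $E(F)$ is countable, completing the proof. The argument is short and I don't expect a real obstacle; the only subtlety is being careful that $\phi(1) = e$ (which follows because $\phi$ is a homeomorphism of arcs and $\phi(0) = t \neq e$) and that $B \cap E(F) = \{e\}$ (which uses only that blades are arcs from $t$ to an endpoint in a fan).
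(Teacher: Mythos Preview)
Your proof is correct and follows exactly the same line as the paper's: show that the assumption $1 \notin X$ forces each endpoint to be isolated in $E(F)$, so $E(F)$ is relatively discrete in the compact metric space $F$ and hence countable. The paper's proof is a terse three-line version of what you wrote; your only slight mis-citation is that the fact $B \cap E(F) = \{e\}$ follows directly from the definitions of endpoint and ramification point (an interior point of the arc $B$ cannot be an endpoint of $F$), not from Observation~\ref{homeo_end-points}, which concerns preservation under homeomorphisms.
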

\begin{proof}
Note that $F$ is a compact metric space. Because $F$ is $X$-EPG and $1\notin X$, no member of $E(F)$ is a limit point of $E(F)$; i.e., $E(F)$ is a relatively discrete subset of $F$. Hence $E(F)$ is countable.
\end{proof}

\begin{lemma}\label{lem:HomeoCantor}
Let $C$ be a comb, and suppose $F_C$ is an $X$-EPG smooth fan for some $X \sub [0,1]$. If $0 \notin X$ and $X \neq \0$, then $\pi_1[\mathcal E(C)]$ is homeomorphic to the Cantor space.
\end{lemma}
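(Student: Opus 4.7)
The plan is to verify the hypotheses of Brouwer's characterization of the Cantor space for $\pi_1[\mathcal E(C)]$: nonempty, compact, metrizable, zero-dimensional, and perfect. Two of these come for free. By condition $(2)$ of Definition~\ref{def:comb}, each tip of $C$ lies in $K \times (0,1]$, so $\pi_1[\mathcal E(C)] \sub K$ and is therefore automatically zero-dimensional and metrizable. By condition $(4)$, $|\B(C)| \geq 3$, so $\pi_1[\mathcal E(C)]$ is nonempty.

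For compactness, I would show that $\pi_1[\mathcal E(C)]$ is closed in $K$, using the hypothesis $0 \notin X$. Lemma~\ref{setA_fancyA_EPG}(1) gives $\closure{E(C)} \cap (I \times \{0\}) = \emptyset$, and since $\mathcal E(C) \sub E(C)$, the closure of $\mathcal E(C)$ in $I^2$ is bounded away from the $x$-axis. Given $x_n \in \pi_1[\mathcal E(C)]$ with $x_n \to x$, I would choose tips $(x_n, y_n) \in \mathcal E(C)$ and pass to a subsequence converging to some $(x, y)$; by the previous sentence $y > 0$. Closedness of $C$ forces $(x, y) \in C$, so the blade of $C$ at $x$ reaches height at least $y > 0$, whence $x \in \pi_1[\mathcal E(C)]$.

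For perfectness, the hypothesis $X \neq \emptyset$ does the work. Fix $x \in \pi_1[\mathcal E(C)]$ with blade $B_x$, and apply Lemma~\ref{setA_fancyA_EPG}(1) again to produce a homeomorphism $\phi: I \to B_x$ with $\phi[X] = B_x \cap \closure{E(C) \setminus B_x}$; this set is nonempty because $X$ is. Pick $(x, y)$ in this intersection; since $\closure{E(C)}$ avoids the $x$-axis, $y > 0$. Choose $(x_n, y_n) \in E(C) \setminus B_x$ converging to $(x, y)$. Eventually $y_n > 0$, so $(x_n, y_n) \notin \{(0,0),(1,0)\}$; the inclusion $E(C) \sub \mathcal E(C) \cup \{(0,0),(1,0)\}$ recorded before Observation~\ref{obs:Blades} then forces $(x_n, y_n) \in \mathcal E(C)$. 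Moreover $x_n \neq x$: if $x_n = x$, then $(x, y_n) \in C$ with $y_n$ below the height of $B_x$, which would place $(x_n, y_n)$ on $B_x$, contrary to its selection. So $x_n \to x$ inside $\pi_1[\mathcal E(C)] \setminus \{x\}$, proving $x$ is not isolated.

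The main obstacle is bookkeeping rather than any single verification. The two hypotheses play complementary but distinct roles: $0 \notin X$ pushes tips away from the $x$-axis, which is what makes $\pi_1[\mathcal E(C)]$ closed, while $X \neq \emptyset$ forces each blade to be a limit of other blades, which is what makes $\pi_1[\mathcal E(C)]$ perfect. Keeping the distinction between $\mathcal E(C)$ and $E(C)$ straight is essential, and the inclusion $\mathcal E(C) \sub E(C) \sub \mathcal E(C) \cup \{(0,0),(1,0)\}$ is precisely what allows us to convert information about endpoints of $C$ into information about tips. Once both verifications are in hand, Brouwer's characterization immediately delivers the conclusion.
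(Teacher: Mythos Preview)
Your proof is correct and follows essentially the same route as the paper's: establish that $\pi_1[\mathcal E(C)]$ is closed in $K$ using $0 \notin X$ (via Lemma~\ref{setA_fancyA_EPG}), establish that it has no isolated points using $X \neq \emptyset$, and then invoke Brouwer's characterization. Your bookkeeping around the distinction between $E(C)$ and $\mathcal E(C)$ is slightly more explicit than the paper's, but the argument is the same.
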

\begin{proof}
Observe that every blade of $C$ contains exactly one member of $\mathcal E(C)$. 
Consequently, $\pi_1$ restricts to an injection on $\mathcal E(C)$. 
We claim that $\pi_1[\mathcal E(C)]$ is closed and has no isolated points. 

To see that it is closed, fix a sequence $\seq{x_n}{n \in \N}$ of points in $\pi_1[\mathcal E(C)]$. For each $n$, there is some $y_n$ such that $(x_n,y_n) \in \mathcal E(C)$. Because $C$ is compact, $\seq{(x_n,y_n)}{n \in \N}$ has a subsequence converging to some $(x,y) \in C$. 
Because $0 \notin X$, Lemma~\ref{setA_fancyA_EPG} implies $y > 0$. But then $C$ has a blade at $x$, and the tip of that blade is in $\mathcal E(C)$, hence $x \in \pi_1[\mathcal E(C)]$. 
Thus we have a subsequence of $\seq{x_n}{n \in \N}$ converging to $x \in \pi_1[\mathcal E(C)]$. 
This shows that every sequence in $\pi_1[\mathcal E(C)]$ has a convergent subsequence, which means $\pi_1[\mathcal E(C)]$ is closed.

To see that $\pi_1[\mathcal E(C)]$ has no isolated points, fix some $x \in \pi_1[\mathcal E(C)]$, and fix $y > 0$ with $(x,y) \in \mathcal E(C)$. 
Because $X \neq \0$, there is (by Lemma~\ref{setA_fancyA_EPG}) some sequence of distinct points $\seq{(x_n,y_n)}{n \in \N}$ in $\mathcal E(C)$ converging to $(x,y')$ for some $y' \leq y$. Because $\pi_1$ is continuous and (as noted above) injective on $\mathcal E(C)$, this means $\seq{x_n}{n \in \N}$ is a sequence of distinct points in $\pi_1[\mathcal E(C)]$ converging to $\pi_1((x,y')) = x$. Hence $x$ is not isolated in $\pi_1[\mathcal E(C)]$. 

Thus $\pi_1[\mathcal E(C)]$ is a closed, nonempty subset of $K$ (the Cantor set) with no isolated points. By Brouwer's characterization of the Cantor space, this implies $\pi_1[\mathcal E(C)]$ is homeomorphic to the Cantor space.
\end{proof}

\begin{theorem}\label{thm:Uncountable}
Let $X \sub [0,1]$, $X \neq \0$, and suppose $F$ is an $X$-EPG smooth fan. If $0 \notin X$, then $E(F)$ is uncountable.
\end{theorem}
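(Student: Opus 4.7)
The plan is to deduce this theorem almost immediately from Lemma~\ref{lem:HomeoCantor} and Lemma~\ref{setA_endpoints_gen}, once we pass from the abstract smooth fan $F$ to a concrete comb representation.

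First, I would invoke Theorem~\ref{thm:FC} to fix a comb $C \sub I^2$ with $F_C \homeo F$. Up to homeomorphism we may therefore assume $F = F_C$. Being $X$-EPG is preserved under homeomorphism, so $F_C$ is itself an $X$-EPG smooth fan with $X \neq \0$ and $0 \notin X$. This puts us in the exact hypothesis of Lemma~\ref{lem:HomeoCantor}.

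Next I would apply Lemma~\ref{lem:HomeoCantor} to conclude that $\pi_1[\mathcal E(C)]$ is homeomorphic to the Cantor space, and in particular is uncountable. To transfer this to a statement about $E(F)$, I would use two observations already recorded in the paper: (i) by Lemma~\ref{setA_endpoints_gen}(2), $\mathcal E(C) = E(F_C)$; and (ii) as noted in the proof of Lemma~\ref{lem:HomeoCantor}, every blade of $C$ contains exactly one point of $\mathcal E(C)$, so $\pi_1$ restricts to an injection on $\mathcal E(C)$. Combining (i) and (ii) gives
\[
|E(F)| \,=\, |E(F_C)| \,=\, |\mathcal E(C)| \,=\, |\pi_1[\mathcal E(C)]|,
\]
which is uncountable, as desired.

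There is essentially no obstacle here: all the real content has already been packaged into Lemma~\ref{lem:HomeoCantor}. The only thing to be careful about is remembering that $F$ is only assumed to be a smooth fan abstractly, so one must first realize it as the fan of some comb before the comb-based lemmas can be applied; this is handled by Theorem~\ref{thm:FC}.
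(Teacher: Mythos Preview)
Your proposal is correct and follows essentially the same route as the paper's proof: invoke Theorem~\ref{thm:FC} to pass to a comb, apply Lemma~\ref{lem:HomeoCantor} to get that $\pi_1[\mathcal E(C)]$ is a Cantor space, and conclude via $E(F_C)=\mathcal E(C)$. The only minor difference is that you appeal to the injectivity of $\pi_1$ on $\mathcal E(C)$ to equate cardinalities, whereas the paper simply notes that an uncountable image forces an uncountable domain; both are fine.
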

\begin{proof}
Using Theorem \ref{thm:FC}, let $C$ be a comb such that $F \homeo F_C$. By the previous lemma, $\pi_1[\mathcal E(C)]$ is homeomorphic to the Cantor space, which means $\mathcal E(C)$ is uncountable. But $E(F_C) = \mathcal E(C)$, so $E(F)$ is uncountable.
\end{proof}

\begin{corollary}\label{cor:4kinds}
If $X \sub [0,1]$, and there is an $X$-EPG smooth fan, then either $X = \0$, or else $0 \in X$ or $1 \in X$ (or both).
\end{corollary}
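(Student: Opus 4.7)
My plan is to get this corollary essentially for free by combining the two theorems immediately preceding it. The strategy is a clean proof by contradiction: assume the conclusion fails, meaning $X \neq \0$ and both $0 \notin X$ and $1 \notin X$, and then derive contradictory cardinality statements about $E(F)$.

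More specifically, I would first observe that from $1 \notin X$ alone, Theorem~\ref{thm:Countable} applies directly to the $X$-EPG smooth fan $F$ and tells me that $E(F)$ is countable. Then, using the other two hypotheses $X \neq \0$ and $0 \notin X$, Theorem~\ref{thm:Uncountable} applies and yields that $E(F)$ is uncountable. Since these two conclusions are incompatible, one of the assumptions must fail; given that we kept $X \neq \0$, we must have $0 \in X$ or $1 \in X$.

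There is essentially no obstacle here: the corollary is little more than a logical juxtaposition of the two preceding theorems, whose hypotheses together cover exactly the case we wish to exclude. The only thing to be slightly careful about is bookkeeping the three cases ($X = \0$, versus $X \neq \0$ with $0 \in X$ or $1 \in X$, versus the forbidden case $X \neq \0$ with $0,1 \notin X$) so that the contradiction is set up against the right theorem; but this is purely a matter of presentation, not of mathematical content.
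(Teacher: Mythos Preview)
Your proposal is correct and matches the paper's own proof essentially verbatim: assume $X \neq \0$ with $0,1 \notin X$, then Theorem~\ref{thm:Countable} gives $E(F)$ countable while Theorem~\ref{thm:Uncountable} gives $E(F)$ uncountable, a contradiction.
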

\begin{proof}
By the preceding lemmas, if $F$ were an $X$-EPG smooth fan for any nonempty $X \not\ni 0,1$, then $E(F)$ would be both countable and uncountable.
\end{proof}

This theorem answers Problem 3.41 from \cite{Connecting}, which asked whether a smooth fan can be $\{\nicefrac{1}{2}\}$-endpoint-generated. 

Note that there are examples of $X$-EPG fans for sets $X$ satisfying each of the four possibilities from Corollary~\ref{cor:4kinds}: 
\begin{enumerate}
	\item $X = \0$: The simple $n$-od, for any $n \geq 3$, is $\0$-EPG.
	\item $0,1 \in X$: The Lelek fan is $[0,1]$-EPG.
	\item $1 \in X$: The Cantor fan is $\{1\}$-EPG.
	\item $0 \in X$: The star (i.e., the one point compactification of countably many copies of $(0,1]$, drawn below) is $\{0\}$-EPG.
\end{enumerate}

\vspace{1mm}

\begin{center}
\begin{tikzpicture}[scale=1.5]

\draw (0,0) -- (-2,0);
\begin{scope}[rotate=144]
\draw (0,0) -- (1.64,0);
\end{scope}
\begin{scope}[rotate=115.2]
\draw (0,0) -- (1.345,0);
\end{scope}
\begin{scope}[rotate=92.16]
\draw (0,0) -- (1.103,0);
\end{scope}
\begin{scope}[rotate=73.728]
\draw (0,0) -- (.904,0);
\end{scope}
\begin{scope}[rotate=58.98]
\draw (0,0) -- (.741,0);
\end{scope}
\begin{scope}[rotate=47.186]
\draw (0,0) -- (.608,0);
\end{scope}
\begin{scope}[rotate=37.749]
\draw (0,0) -- (.499,0);
\end{scope}

\node at (.26,.02) {\tiny $.$};
\node at (.29,.09) {\tiny $.$};
\node at (.32,.16) {\tiny $.$};

\end{tikzpicture}
\end{center}

\vspace{1mm}


\begin{theorem}\label{thm:BCT1}
Suppose $X \sub [0,1]$ and there is an $X$-EPG smooth fan. If $0 \notin X$ and $1 \in X$, then $X = \{1\}$.
\end{theorem}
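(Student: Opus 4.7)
My plan is to realize the given $X$-EPG smooth fan as $F_C$ for some comb $C$ (by Theorem~\ref{thm:FC}), and then apply a Baire category argument to the height function of $C$.

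First, I would set up notation as follows. Since $1 \in X$, we have $X \neq \0$, so Lemma~\ref{lem:HomeoCantor} applies and $K' := \pi_1[\mathcal E(C)]$ is a Cantor set. Define $h : K' \to (0,1]$ by $h(x) = \max\{y : (x,y) \in C\}$, so that $(x,h(x))$ is the tip of the blade $B_x$. Because $C$ is closed in $I^2$, the function $h$ is upper semi-continuous on $K'$: if $x_n \to x$ in $K'$ with $h(x_n) \to y$, then $(x_n,h(x_n)) \to (x,y) \in C$, forcing $y \le h(x)$. The $X$-EPG property, via Lemma~\ref{setA_fancyA_EPG}(1), then says that for each $x \in K'$ the set
\[
A(x) = \{y \in (0,h(x)] : \exists\, x_n \in K' \setminus \{x\} \text{ with } x_n \to x \text{ and } h(x_n) \to y\}
\]
is the image of $X$ under an increasing homeomorphism $I \to [0,h(x)]$ that sends $0$ to $0$ and $1$ to $h(x)$.

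The crux is to produce a continuity point of $h$. Since $h$ is USC on the compact metric space $K'$, its set of continuity points is a dense $G_\delta$, in particular nonempty. The standard verification proceeds by writing the discontinuity set as $\bigcup_n \{x \in K' : \omega(h,x) \ge 1/n\}$, a countable union of closed sets (oscillation is always upper semi-continuous); each of these has empty interior, for otherwise one could iterate the USC inequality $\liminf_{y \to x} h(y) \le h(x) - 1/n$ to build a Cauchy sequence inside such an open set along which $h$ drops by essentially $1/n$ at every step, contradicting $h \ge 0$. Pick any continuity point $x_0 \in K'$.

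The rest is quick. Because $K'$ is a Cantor set it has no isolated points, so there is at least one sequence $x_n \to x_0$ in $K' \setminus \{x_0\}$; and by continuity \emph{every} such sequence satisfies $h(x_n) \to h(x_0)$. Hence $A(x_0) = \{h(x_0)\}$. Translating through Lemma~\ref{setA_fancyA_EPG}(1), the parametrizing homeomorphism $\phi : I \to B_{x_0}$ satisfies $\phi[X] = \{\phi(1)\}$; by injectivity of $\phi$, this forces $X \subseteq \{1\}$, and since $1 \in X$ by hypothesis, $X = \{1\}$.

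The main obstacle is the Baire category step: extracting a continuity point of a function that is only known to be upper semi-continuous. Everything else is bookkeeping through the comb representation and the definitions.
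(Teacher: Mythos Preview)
Your proof is correct and is essentially the same Baire category argument as the paper's, repackaged more conceptually. The paper defines $e_x = h(x)$ and $m_x = \min A(x)$, so that $e_x - m_x$ is precisely the oscillation of your upper semicontinuous height function $h$ at $x$; the paper's sets $L_{1/n}$ are exactly your $\{x : \omega(h,x) \ge 1/n\}$, its verification that these are closed is the upper semicontinuity of oscillation, and its closing contradiction (find $x' \in U$ with $m_{x'}$ below the infimum) is one step of your descent. What you gain is a clean invocation of the standard fact that a semicontinuous real-valued function on a Baire space is continuous on a dense $G_\delta$, after which the conclusion is immediate; the paper instead carries out that argument by hand for this particular $h$. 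One small remark: you do not actually need a Cauchy sequence in the empty-interior step, since $h$ takes values in $[0,1]$ and finitely many drops of size $1/(2n)$ already give the contradiction.
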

\begin{proof}
Aiming for a contradiction, suppose that $F$ is an $X$-EPG smooth fan for some $X$ with $0 \notin X$, $1 \in X$, and $X \neq \{1\}$. 
Applying Theorem~\ref{thm:FC}, let $C$ be a comb such that $F_C \homeo F$.

For each $x$ such that $C$ has a blade at $x$, define 
\begin{align*}
e_x &\,=\, \max \set{y}{(x,y) \in B_x \cap \closure{\mathcal E(C) \setminus B_x}} \\
m_x &\,=\, \min \set{y}{(x,y) \in B_x \cap \closure{\mathcal E(C) \setminus B_x}}.
\end{align*}
Because $1 \in X$, the highest point at which $\closure{\mathcal E(C) \setminus B}$ hits a blade $B$ is the tip of $B$. In other words, $e_x$ is the $Y$-coordinate of the tip of $B_x$, and $(x,e_x) \in \mathcal E(C)$ (hence the choice of the letter $e$). 

For each $n \in \N$, let 
$$L_{\nicefrac{1}{n}} \,=\, \set{x}{e_x-m_x \geq \nicefrac{1}{n}}.$$ 
Lemma~\ref{setA_fancyA_EPG} implies that for each $B \in \B(C)$, $B \cap \closure{\mathcal E(C) \setminus B}$ contains at least two points: the tip of $B$ (because $1 \in X$) and at least one other point of $B$ (because $X \neq \{1\}$). Thus $e_x-m_x > 0$ whenever $C$ has a blade at $x$. Therefore 
$$\textstyle \bigcup_{n \in \N}L_{\nicefrac{1}{n}} \,=\, \set{x}{C \text{ has a blade at }x} \,=\, \pi_1[\mathcal E(C)].$$
Because $0 \notin X$ and $X \neq \0$, Lemma~\ref{lem:HomeoCantor} asserts that $\pi_1[\mathcal E(C)]$ is homeomorphic to the Cantor space. 

We claim the sets $L_{\nicefrac{1}{n}}$ are all closed. To see this, fix $n \in \N$ and suppose $\seq{x_k}{k \in \N}$ is a sequence of points in $L_{\nicefrac{1}{n}}$ converging to some point $x$; we wish to show $x \in L_{\nicefrac{1}{n}}$. Because $\seq{(x_k,e_{x_k})}{k \in \N}$ is a sequence of points in the compact set $\closure{\mathcal E(C)}$, some subsequence $\seq{(x_{k_i},e_{x_{k_i}})}{i \in \N}$ of this sequence converges to a point $(x,e) \in \closure{\mathcal E(C)}$. (Note that this point must have first coordinate $x$, because $x = \lim_{k \to \infty}x_k$.) 
Again, $\seq{(x_{k_i},m_{x_{k_i}})}{i \in \N}$ is a sequence of points in $\closure{\mathcal E(C)}$, so some subsequence of this sequence converges to a point $(x,m) \in \closure{\mathcal E(C)}$. 
Because $e_{x_k}-m_{x_k} \geq \nicefrac{1}{n}$ for all $k$, we have $e-m \geq \nicefrac{1}{n}$ as well. 
Because $(x,m),(x,e) \in \closure{\mathcal E(C)}$, this implies $x \in L_{\nicefrac{1}{n}}$ as claimed.

Because $\pi_1[\mathcal E(C)]$ is homeomorphic to the Cantor space, the Baire Category Theorem asserts that it is not a countable union of closed sets nowhere dense in $\pi_1[\mathcal E(C)]$. 
But $\pi_1[\mathcal E(C)] = \bigcup_{n \in \N}L_{\nicefrac{1}{n}}$, and each $L_{\nicefrac{1}{n}}$ is closed by the previous paragraph. 
It follows that there is some $n \in \N$ such that $L_{\nicefrac{1}{n}}$ is not nowhere dense in $\pi_1[\mathcal E(C)]$. Because $L_{\nicefrac{1}{n}}$ is closed in $\pi_1[\mathcal E(C)]$, this means $L_{\nicefrac{1}{n}}$ must have nonempty interior in $\pi_1[\mathcal E(C)]$.

Fix some nonempty, relatively open $U \sub \pi_1[\mathcal E(C)]$ such that $U \subset L_{\nicefrac{1}{n}}$. 
Let $m = \inf \set{m_x}{x \in U}$, and fix some particular $x \in U$ such that $m_x < m+\nicefrac{1}{3n}$. 
On the one hand, because $(x,m_x) \in \closure{\mathcal E(C)}$, there is some $x' \in U$ such that $(x',e_{x'})$ is within $\nicefrac{1}{3n}$ of $(x,m_x)$; in particular, $e_{x'} \leq m_x + \nicefrac{1}{3n} < m + \nicefrac{2}{3n}$. 
But $x' \in U \sub L_{\nicefrac{1}{n}}$, so this implies 
$m_{x'} \leq e_{x'}-\nicefrac{1}{n} < m - \nicefrac{1}{3n},$
which contradicts the definition of $m$.
\end{proof}

\begin{corollary}\label{cor:Cantor}
Up to homeomorphism, the Cantor fan is the only $\{1\}$-EPG smooth fan. 
Furthermore, the Cantor fan is the only $X$-EPG smooth fan for a set $X$ with $0 \notin X$ and $1 \in X$.
\end{corollary}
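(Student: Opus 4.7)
The second assertion will follow immediately from Theorem~\ref{thm:BCT1}, which forces any $X$ with $0 \notin X$ and $1 \in X$ admitting an $X$-EPG smooth fan to equal $\{1\}$. So the real task is the first assertion: uniqueness up to homeomorphism of a $\{1\}$-EPG smooth fan.

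My plan is to take a $\{1\}$-EPG smooth fan $F$, apply Theorem~\ref{thm:FC} to write $F \homeo F_C$ for some comb $C$, show the tips of the blades of $C$ form the graph of a continuous positive function over a Cantor set, and then rescale each blade to length $1$ to identify $F_C$ with the cone over that Cantor set. Setting $K' = \pi_1[\mathcal{E}(C)]$, each blade contains a unique tip, so $\pi_1$ restricts to a bijection $\mathcal{E}(C) \to K'$ and $\mathcal{E}(C) = \set{(x,h(x))}{x \in K'}$ for some $h : K' \to (0,1]$. Lemma~\ref{lem:HomeoCantor} gives $K' \homeo K$, the Cantor set.

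The key step is continuity of $h$. Suppose $x_n \to x$ in $K'$ and, after passing to a subsequence, $h(x_n) \to y$. Since $C$ is closed, $(x,y) \in C$, and since $h(x)$ is the maximal height of the blade at $x$, $y \le h(x)$, so $(x,y) \in B_x$. If $x_n \neq x$ infinitely often, the corresponding points $(x_n,h(x_n))$ lie in $\mathcal{E}(C) \setminus B_x$, placing $(x,y)$ in $B_x \cap \closure{\mathcal{E}(C) \setminus B_x}$. Because $0 \notin \{1\}$, Lemma~\ref{setA_fancyA_EPG}(1) applies; it moreover forces $E(C) = \mathcal{E}(C)$ since $\closure{E(C)} \cap (I \times \{0\}) = \emptyset$, and yields $B_x \cap \closure{\mathcal{E}(C) \setminus B_x} = \{(x,h(x))\}$, whence $y = h(x)$. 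The case $x_n = x$ eventually is trivial.

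The remainder is routine rescaling. Since $h$ is continuous and positive on the compact set $K'$, we get $h \ge \delta$ for some $\delta > 0$. Take $C' = [0,1] \times \{0\} \cup K' \times [0,1]$, which is easily seen to be a comb via Definition~\ref{def:comb} and whose fan $F_{C'}$ is (by inspection) the cone over $K'$. Define $\Psi : F_C \to F_{C'}$ by $\Psi(t) = t$ and $\Psi(x,y) = (x, y/h(x))$ for $y > 0$. Then $\Psi$ is a bijection, continuous away from $t$ because $h$ is continuous and positive, and continuous at $t$ because any sequence $(x_n,y_n) \to t$ in $F_C$ has $y_n \to 0$, hence $y_n/h(x_n) \le y_n/\delta \to 0$. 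As a continuous bijection between compact Hausdorff spaces, $\Psi$ is a homeomorphism, so $F \homeo F_{C'}$. Finally, the cone over $K'$ is homeomorphic to the cone over $K$, i.e., the Cantor fan. The main obstacle is the continuity of $h$, but the $\{1\}$-EPG hypothesis is tailor-made for it: it forbids endpoints from accumulating onto a blade anywhere other than its tip, which is precisely what prevents the tip-height function from jumping; everything after that is soft.
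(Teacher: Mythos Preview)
Your argument is correct, and it takes a genuinely different route from the paper's proof. The paper argues that for a $\{1\}$-EPG smooth fan $F$, the set $E(F)$ is closed in $F$, has no isolated points, and contains no arcs, hence is homeomorphic to the Cantor space by Brouwer's characterization; it then invokes an external result (\cite[Theorem 2.2]{HGH}) stating that the Cantor fan is the unique smooth fan whose endpoint set is a Cantor space. Your approach instead works directly with a comb presentation: you use Lemma~\ref{lem:HomeoCantor} to get $K' = \pi_1[\mathcal E(C)] \homeo K$, exploit the $\{1\}$-EPG hypothesis via Lemma~\ref{setA_fancyA_EPG}(1) to show the tip-height function $h$ is continuous, and then rescale each blade to produce an explicit homeomorphism to the cone over $K'$. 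Your proof is more self-contained, avoiding the appeal to \cite{HGH}, while the paper's proof is shorter once that reference is granted. One small remark: when you verify that $\Psi$ is well-defined on $F_C \setminus \{t\}$, it is worth noting explicitly that every $(x,y) \in C$ with $y > 0$ has $x \in K'$ (because compactness of $C$ forces a tip above $x$), so that $h(x)$ is defined; this is implicit in your setup but easy to miss.
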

\begin{proof}
If $F$ is a $\{1\}$-EPG smooth fan, then $E(F)$ is a closed subset of $F$ with no isolated points, and containing no arcs. By Brouwer's characterization of the Cantor space, this implies $E(F)$ is homeomorphic to the Cantor space. 
By \cite[Theorem 2.2]{HGH}, the Cantor fan is, up to homeomorphism, the unique smooth fan in which $E(F)$ is homeomorphic to the Cantor space. 
This proves the first statement of the corollary, and the second statement follows by combining this with the previous theorem.
\end{proof}

The proof of the following theorem is similar to the proof of Theorem~\ref{thm:BCT1}, insofar as it employs the Baire Category Theorem in a similar way.

\begin{theorem}
Suppose $X \sub [0,1]$ with $1 \in X$, and there is an $X$-EPG smooth fan. Then either $1$ is an isolated point of $X$, or $X = [0,1]$.
\end{theorem}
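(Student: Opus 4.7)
The plan is to run a Baire category argument parallel to Theorem~\ref{thm:BCT1}, with the role of $m_x$ and $e_x$ played by the endpoints of a particular gap in $A_x$. First, if $0 \notin X$ then Theorem~\ref{thm:BCT1} forces $X = \{1\}$, so $1$ is isolated; hence we may assume $0 \in X$. Assume, for contradiction, that $X \neq [0,1]$ and fix a component $(c,d)$ of $[0,1]\setminus X$. Since $1$ is not isolated in $X$, we cannot have $d = 1$ (else $(c,1] \cap X = \{1\}$ would witness isolation), so $d < 1$. By Theorem~\ref{thm:FC}, fix a comb $C$ with $F_C \homeo F$. For each $x \in \pi_1[\mathcal{E}(C)]$ let $\phi_x \colon I \to B_x$ be the order-preserving homeomorphism with $\phi_x[X] = A_x := B_x \cap \closure{\mathcal{E}(C) \setminus B_x}$, and set $c_x = \phi_x(c)$, $d_x = \phi_x(d)$, so $(c_x,d_x)$ is the canonical gap of $A_x$ corresponding to $(c,d)$, satisfying $0 \le c_x < d_x < e_x$. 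For $n \in \N$, let $L_n = \{x : d_x - c_x \ge 1/n\}$, so $\bigcup_n L_n = \pi_1[\mathcal{E}(C)]$.

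Working in the compact (hence Baire) space $\closure{\pi_1[\mathcal{E}(C)]} \sub K$, the plan is to produce, for some $n$, a nonempty relatively open $U \sub \pi_1[\mathcal{E}(C)]$ with $U \sub L_n$. Given such a $U$, set $\nu = \inf\{c_y : y \in U\}$ and pick $x \in U$ with $c_x < \nu + \tfrac{1}{3n}$. Because $(x,c_x) \in A_x \sub \closure{\mathcal{E}(C) \setminus B_x}$ and $c_x < e_x$, there are tips $(x',e_{x'})$ with $x' \neq x$ converging to $(x,c_x)$; choose one with $x'$ close enough to $x$ to lie in $U$ and with $|e_{x'} - c_x| < \tfrac{1}{3n}$, so that $e_{x'} < \nu + \tfrac{2}{3n}$. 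Since $x' \in U \sub L_n$ gives $d_{x'} - c_{x'} \ge 1/n$, and $d_{x'} \le e_{x'}$, we obtain $c_{x'} \le e_{x'} - \tfrac{1}{n} < \nu - \tfrac{1}{3n} < \nu$, contradicting $x' \in U$ and the definition of $\nu$.

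The main obstacle is the Baire step. In Theorem~\ref{thm:BCT1} the sets $L_{1/n}$ were visibly closed because $m_x$ and $e_x$ are the extrema of $A_x$, automatically giving $m \ge m_x$ and $e \le e_x$ in the limit; here $c_x, d_x$ are interior points of $A_x$, and for $x_k \to x$ the limits of $c_{x_k}, d_{x_k}$ need not equal $c_x, d_x$. A pigeonhole on the countably many gaps of the closed set $X$ lets us make the witnessing gap of $A_{x_k}$ constant along a subsequence, after which the resulting limit interval is still a gap (possibly different from $(c_x,d_x)$) of $A_x$. This is cleanly handled by replacing $L_n$ with the larger $L_n^+ = \{x : A_x$ has some gap of width $\ge 1/n\}$, whose closedness is thereby established, and by running the infimum argument with $\nu = \inf\{$ left endpoints of witnessing gaps of width $\ge 1/n$ on blades in $U\}$ in place of $\inf\{c_y : y \in U\}$.
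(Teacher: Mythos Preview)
Your overall strategy mirrors the paper's, but the crucial Baire step does not go through as written. The pigeonhole argument shows only that along a subsequence the witnessing gaps correspond to a single gap $(c',d')$ of $X$, so that $a_k = \phi_{x_k}(c') \to a$ and $b_k = \phi_{x_k}(d') \to b$ with $b - a \ge 1/n$. It does \emph{not} show that $(a,b)$ is disjoint from $A_x$. Indeed, $(x_k,y) \notin \closure{\mathcal E(C)}$ for $y \in (a_k,b_k)$ is a statement about a single vertical line, and there is nothing to prevent tips $(z_j,e_{z_j})$ from blades $z_j$ \emph{not} among the $x_k$ accumulating at $(x,y_0)$ for some $y_0 \in (a,b)$; the homeomorphisms $\phi_{x_k}$ are not assumed to vary continuously, so $\phi_x(c'),\phi_x(d')$ need bear no relation to $a,b$. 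Thus $L_n^+$ is not shown to be closed. A secondary issue: once $0 \in X$, the set $\pi_1[\mathcal E(C)]$ need not be closed in $K$, and the $L_n^+$ only cover $\pi_1[\mathcal E(C)]$, not its closure, so you have no Baire space on which the covering argument runs.

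The paper repairs both problems simultaneously by \emph{localizing first}: it fixes a single point $(x_0,y_0)$ on some blade that lies outside $\closure{\mathcal E(C)}$ (such a point exists since $X \neq [0,1]$), and uses compactness to find a whole rectangle $V \times (y_0-\e,y_0+\e)$ disjoint from $\closure{\mathcal E(C)}$. Restricting to $D = \{x \in V : (x,y_0) \in C\}$ gives a genuinely closed (hence Cantor, hence Baire) set of blades. Crucially, for $x \in D$ the quantities $e_x$ and $m_x := \min\{y \ge y_0 : (x,y) \in A_x\}$ are extrema taken inside the fixed compact set $(V \times [y_0,1]) \cap \closure{\mathcal E(C)}$, which is exactly what makes their limits land back in $A_x$ and hence makes $L_{1/n}$ closed. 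Your gap-tracking quantities $c_x,d_x$ (or arbitrary witnessing gaps) have no such extremal description, which is why the limit argument breaks. The fix is not a tweak of your $L_n^+$; it is to pass to $D$ first.
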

\begin{proof}
Aiming for a contradiction, suppose that $F$ is an $X$-EPG smooth fan for some $X \sub [0,1]$ such that $1 \in X$, but $X \neq [0,1]$ and $1$ is not an isolated point of $X$. 
By the previous theorem, if $0 \notin X$ then $X = \{1\}$ and we are done. Thus we may and do assume that $0 \in X$. 
Applying Theorem~\ref{thm:FC}, let $C$ be a comb such that $F_C \homeo F$.

Fix some $x_0$ such that $C$ has a blade at $x_0$. By Lemma~\ref{setA_fancyA_EPG}, there is a homeomorphism $h: I \to B_{x_0}$ mapping $X \setminus \{0\}$ onto $(B_{x_0} \setminus \{x_0\}) \cap \closure{\mathcal E(C) \setminus B_{x_0}}$. 
In particular, because $0,1 \in X$ but $X \neq [0,1]$, there is a point of $(0,1)$ not in $X$, and the image of this point under $h$ is a point of $B_{x_0}$ not in $\closure{\mathcal E(C)}$. Fix $y_0 > 0$ such that $(x_0,y_0) \in B_{x_0} \setminus \closure{\mathcal E(C)}$. 

Recall that $C$ is a comb over the Cantor set $K$, so $C \sub I \times \{0\} \cup K \times I$. 
Because $(x_0,y_0) \notin \closure{\mathcal E(C)}$, there is some basic clopen $V \sub K$ with $x_0 \in V$ and there is some $\e > 0$ such that 
$$\big( V \times (y_0-\e,y_0+\e) \big) \cap \closure{\mathcal E(C)} \,=\, \0.$$

\begin{center}
\begin{tikzpicture}[xscale=1.2]
	\draw[black!30, dashed] (-0.5,1) rectangle (0.5,2);
	\filldraw[black!30, dashed, opacity=0.5] (-0.5,1) rectangle (0.5,2);

	\draw[thick] (-2,0) -- (2,0);
	\draw[thick] (0,0) -- (0,3.5);
	\filldraw (0,0) circle (1pt) node [anchor=north]{$x_0$};
	\filldraw (0,1.5) circle (1pt) node [anchor=west]{\small $(x_0,y_0)$};	
\end{tikzpicture}
\end{center}

Because $(x_0,y_0) \in B_{x_0}$, the tip of $B_{x_0}$ has $Y$-coordinate $\geq y_0$. But the tip of $B_{x_0}$ is in $\mathcal E(C)$, so this means its $Y$-coordinate is actually $\geq y_0+\e$; i.e., the tip of $B_{x_0}$ sits above this rectangle. 
Let $D$ denote the set of all $x$ such that there is a blade at $x$ with its tip sitting above this rectangle: i.e., 
$$D \,=\, \set{x \in V}{ \text{there is some $y \geq y_0+\e$ such that }(x,y) \in C }.$$
If $(x,y_0) \in C$ for some $x$, then the tip of $B_x$ has $Y$-coordinate $\geq\! y_0+\e$, because $\big( V \times (y_0-\e,y_0+\e) \big) \cap \closure{\mathcal E(C)} = \0$. Thus
$$D \,=\, \set{x \in V}{(x,y_0) \in C} \,=\, C_{y_0} \cap V.$$
In particular, this makes it clear that $D$ is a compact subset of $K$. 
Let 
$$E \,=\, \set{(x,y)\in \mathcal E(C) }{ x \in D } \,=\, \mathcal E(C) \cap \big(V \times [y_0,1]\big).$$
Because $1 \in X$, every member of $E$ is a limit of points in $\mathcal E(C)$. 
Because $V$ is clopen and $\big(V \times (y_0-\e,y_0+\e)\big) \cap \mathcal E(C) = \0$, this implies $E$ has no isolated points. 
Because $D = \pi_1[E]$ and $\pi_1$ restricts to an injection on $E$, this implies $D$ has no isolated points. 
Thus $D$ is a closed subset of the Cantor set $K$ with no isolated points. Hence $D$ is homeomorphic to the Cantor space.

For each $x\in D$, define
\begin{align*}
e_x &\,=\, \max \set{y}{(x,y) \in B_x \cap \closure{\mathcal E(C) \setminus B_x}} \\
m_x &\,=\, \min \set{y \geq y_0}{(x,y) \in B_x \cap \closure{\mathcal E(C) \setminus B_x}}.
\end{align*}
Because $1 \in X$, $(x,e_x)$ is the tip of $B_x$. 
Because $1$ is not isolated in $X$, Lemma~\ref{setA_fancyA_EPG} says that for each $x \in D$, $(x,e_x)$ is not isolated in $B_x \cap \closure{\mathcal E(C) \setminus B_x}$. If $x \in D$, then $e_x > y_0$ (by the definition of $D$), and this implies that the set $\set{y \geq y_0}{(x,y) \in B_x \cap \closure{\mathcal E(C) \setminus B_x}}$ contains points other than $e_x$. Hence $m_x \neq e_x$, or equivalently $e_x-m_x > 0$, for all $x \in D$. 

Following the proof of the previous theorem, define 
$$L_{\nicefrac{1}{n}} \,=\, \set{x}{e_x-m_x \geq \nicefrac{1}{n}}$$
for each $n \in \N$.  
By the previous paragraph, $D = \bigcup_{n \in \N}L_{\nicefrac{1}{n}}$.

We claim that each of the sets $L_{\nicefrac{1}{n}}$ is closed. To see this, fix $n \in \N$ and suppose $\seq{x_k}{k \in \N}$ is a sequence of points in $L_{\nicefrac{1}{n}}$ converging to some point $x$; we wish to show $x \in L_{\nicefrac{1}{n}}$. Because $\seq{(x_k,e_{x_k})}{k \in \N}$ is a sequence of points in the compact set $\big( V \times [y_0,1] \big) \cap \closure{\mathcal E(C)}$, some subsequence $\seq{(x_{k_i},e_{x_{k_i}})}{i \in \N}$ of this sequence converges to a point $(x,e) \in \big( V \times [y_0,1] \big) \cap \closure{\mathcal E(C)}$. (Note that this point must have first coordinate $x$, because $x = \lim_{k \to \infty}x_k$.) 
Again, $\seq{(x_{k_i},m_{x_{k_i}})}{i \in \N}$ is a sequence of points in $\big(V \times [y_0,1]\big) \cap \closure{\mathcal E(C)}$, so a subsequence of this sequence converges to a point $(x,m) \in (V \times [y_0,1]) \cap \closure{\mathcal E(C)}$. 
Because $e_{x_k}-m_{x_k} \geq \nicefrac{1}{n}$ for all $k$, we have $e-m \geq \nicefrac{1}{n}$ as well. 
Because $(x,m),(x,e) \in \big(V \times [y_0,1]\big) \cap \closure{\mathcal E(C)}$, this implies $x \in L_{\nicefrac{1}{n}}$ as claimed.

Because $D$ is homeomorphic to the Cantor space, the Baire Category Theorem asserts that it is not a countable union of closed sets nowhere dense in $D$. 
It follows that there is some $n \in \N$ such that $L_{\nicefrac{1}{n}}$ is not nowhere dense in $D$, and therefore (since it is closed) must have nonempty interior in $D$.

Fix a nonempty open $U \sub D$ such that $U \subset L_{\nicefrac{1}{n}}$. 
Let $m = \inf \set{m_x}{x \in U}$, and fix some particular $x \in U$ such that $m_x < m+\nicefrac{1}{3n}$. 
On the one hand, because $(x,m_x) \in \closure{\mathcal E(C)}$, there is some $x' \in U$ such that $(x',e_{x'})$ is within $\nicefrac{1}{3n}$ of $(x,m_x)$; in particular, $e_{x'} \leq m_x + \nicefrac{1}{3n} < m + \nicefrac{2}{3n}$. 
But $x' \in U \sub L_{\nicefrac{1}{n}}$, so this implies 
$m_{x'} \leq e_{x'}-\nicefrac{1}{n} < m - \nicefrac{1}{3n},$
contradicting our choice of $m$.
\end{proof}

The results of this section are summarized as follows:

\begin{corollary}\label{cor:list0}
Suppose $X \sub [0,1]$ and there is an $X$-EPG fan. Then one of the following must hold:
\begin{enumerate}
\item $X = \0$, $\{1\}$, or $[0,1]$, or
\item $X$ is a closed subset of $[0,1]$ with $0 \in X$ and $1 \notin X$, or
\item $X$ is a closed subset of $[0,1]$ with $0,1 \in X$, and $1$ is an isolated point of $X$.
\end{enumerate} 
\end{corollary}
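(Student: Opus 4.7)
The plan is to derive this corollary by case analysis, assembling the results already proved in the section; no new machinery is needed. First I would note that by Observation~\ref{S_F_closed}, any $X$ for which an $X$-EPG fan exists is automatically closed, so I only need to verify that $X$ falls into one of the three listed families based on which of the points $0$ and $1$ belong to $X$.

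The four cases according to membership of $0$ and $1$ are handled as follows. If $X = \emptyset$, we are in case (1). If $X \neq \emptyset$ and $0,1 \notin X$, then Corollary~\ref{cor:4kinds} gives a contradiction, so this case is vacuous. If $0 \notin X$ and $1 \in X$, Theorem~\ref{thm:BCT1} forces $X = \{1\}$, again falling in case (1). If $0 \in X$ and $1 \notin X$, then $X$ is closed with $0 \in X \not\ni 1$, which is exactly case (2). Finally, if $0, 1 \in X$, then by the last theorem of the section either $X = [0,1]$ (case (1)) or $1$ is isolated in $X$, yielding case (3).

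The only mildly subtle point is bookkeeping: making sure the theorems are applied under their proper hypotheses (for instance, Theorem~\ref{thm:BCT1} and Theorem~\ref{thm:Uncountable} require $X \neq \emptyset$, so the empty case must be split off first, and the preceding theorem requires $1 \in X$). Since each of the structural restrictions on $X$ has already been established individually, the main obstacle — ruling out the ``bad'' sets via the Baire Category Theorem arguments — is behind us, and the remaining work is a short logical assembly. The proof should therefore consist of a single paragraph enumerating the four cases and citing the corresponding earlier result in each one.
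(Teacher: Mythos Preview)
Your proposal is correct and mirrors the paper's treatment: the paper presents this corollary with no separate proof, explicitly introducing it as a summary of the section's results, so your case analysis citing Observation~\ref{S_F_closed}, Corollary~\ref{cor:4kinds}, Theorem~\ref{thm:BCT1}, and the final theorem is exactly the intended assembly. One small caveat worth noting is that the cited theorems all assume the fan is \emph{smooth}, so the corollary (despite its wording) should be read with that hypothesis---indeed, Section~7 exhibits a non-smooth $[\nicefrac{1}{2},1]$-EPG fan.
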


\section{Constructing endpoint-homogeneous fans}

In the previous section, we proved that for certain $X \sub [0,1]$ there can be no $X$-EPG smooth fan. In this section, we show that for all $X \sub [0,1]$ not ruled out by the results of the previous section, there is an $X$-EPG smooth fan. In other words, the goal of this section is to construct examples of $X$-EPG fans for every $X \sub [0,1]$ listed in Corollary~\ref{cor:list0}. Furthermore, we prove that all the examples we construct are in fact endpoint-homogeneous.

If $X = \0$, $\{1\}$, or $[0,1]$, as in the first bullet point, then we have examples of $X$-EPG fans already:

\begin{center}
\begin{tikzpicture}[xscale=.4,yscale=.4]

\node at (4.5,7.2) {$\ $};

\draw[very thin] (4.5,7) -- (0,0);
\draw[ultra thin] (4.5,7) -- (1/27,0);
\draw[ultra thin] (4.5,7) -- (2/27,0);
\draw[very thin] (4.5,7) -- (1/9,0);
\draw[very thin] (4.5,7) -- (2/9,0);
\draw[ultra thin] (4.5,7) -- (7/27,0);
\draw[ultra thin] (4.5,7) -- (8/27,0);
\draw[very thin] (4.5,7) -- (1/3,0);
\draw[very thin] (4.5,7) -- (2/3,0);
\draw[ultra thin] (4.5,7) -- (19/27,0);
\draw[ultra thin] (4.5,7) -- (20/27,0);
\draw[very thin] (4.5,7) -- (7/9,0);
\draw[very thin] (4.5,7) -- (8/9,0);
\draw[ultra thin] (4.5,7) -- (25/27,0);
\draw[ultra thin] (4.5,7) -- (26/27,0);
\draw[very thin] (4.5,7) -- (1,0);
\draw[very thin] (4.5,7) -- (2,0);
\draw[ultra thin] (4.5,7) -- (55/27,0);
\draw[ultra thin] (4.5,7) -- (56/27,0);
\draw[very thin] (4.5,7) -- (19/9,0);
\draw[very thin] (4.5,7) -- (20/9,0);
\draw[ultra thin] (4.5,7) -- (61/27,0);
\draw[ultra thin] (4.5,7) -- (62/27,0);
\draw[very thin] (4.5,7) -- (7/3,0);
\draw[very thin] (4.5,7) -- (8/3,0);
\draw[ultra thin] (4.5,7) -- (73/27,0);
\draw[ultra thin] (4.5,7) -- (74/27,0);
\draw[very thin] (4.5,7) -- (25/9,0);
\draw[very thin] (4.5,7) -- (26/9,0);
\draw[ultra thin] (4.5,7) -- (79/27,0);
\draw[ultra thin] (4.5,7) -- (80/27,0);
\draw[very thin] (4.5,7) -- (3,0);
\draw[very thin] (4.5,7) -- (6,0);
\draw[ultra thin] (4.5,7) -- (163/27,0);
\draw[ultra thin] (4.5,7) -- (164/27,0);
\draw[very thin] (4.5,7) -- (55/9,0);
\draw[very thin] (4.5,7) -- (56/9,0);
\draw[ultra thin] (4.5,7) -- (169/27,0);
\draw[ultra thin] (4.5,7) -- (170/27,0);
\draw[very thin] (4.5,7) -- (19/3,0);
\draw[very thin] (4.5,7) -- (20/3,0);
\draw[ultra thin] (4.5,7) -- (181/27,0);
\draw[ultra thin] (4.5,7) -- (182/27,0);
\draw[very thin] (4.5,7) -- (61/9,0);
\draw[very thin] (4.5,7) -- (62/9,0);
\draw[ultra thin] (4.5,7) -- (187/27,0);
\draw[ultra thin] (4.5,7) -- (188/27,0);
\draw[very thin] (4.5,7) -- (7,0);
\draw[very thin] (4.5,7) -- (8,0);
\draw[ultra thin] (4.5,7) -- (217/27,0);
\draw[ultra thin] (4.5,7) -- (218/27,0);
\draw[very thin] (4.5,7) -- (73/9,0);
\draw[very thin] (4.5,7) -- (74/9,0);
\draw[ultra thin] (4.5,7) -- (223/27,0);
\draw[ultra thin] (4.5,7) -- (224/27,0);
\draw[very thin] (4.5,7) -- (25/3,0);
\draw[very thin] (4.5,7) -- (26/3,0);
\draw[ultra thin] (4.5,7) -- (235/27,0);
\draw[ultra thin] (4.5,7) -- (236/27,0);
\draw[very thin] (4.5,7) -- (79/9,0);
\draw[very thin] (4.5,7) -- (80/9,0);
\draw[ultra thin] (4.5,7) -- (241/27,0);
\draw[ultra thin] (4.5,7) -- (242/27,0);
\draw[very thin] (4.5,7) -- (9,0);

\begin{scope}[shift={(21,7)}]
\draw[very thin] (0,0) -- (-4.5*.31,-7*.31);
\draw[very thin] (0,0) -- (-4.5*.41+1/27*.41,-7*.41);
\draw[very thin] (0,0) -- (-4.5*.59+2/27*.59,-7*.59);
\draw[very thin] (0,0) -- (-4.5*.26+1/9*.26,-7*.26);
\draw[very thin] (0,0) -- (-4.5*.53+2/9*.53,-7*.53);
\draw[very thin] (0,0) -- (-4.5*.58+7/27*.58,-7*.58);
\draw[very thin] (0,0) -- (-4.5*.97+8/27*.97,-7*.97);
\draw[very thin] (0,0) -- (-4.5*.93+1/3*.93,-7*.93);
\draw[very thin] (0,0) -- (-4.5*.23+2/3*.23,-7*.23);
\draw[very thin] (0,0) -- (-4.5*.84+19/27*.84,-7*.84);
\draw[very thin] (0,0) -- (-4.5*.62+20/27*.62,-7*.62);
\draw[very thin] (0,0) -- (-4.5*.64+7/9*.64,-7*.64);
\draw[very thin] (0,0) -- (-4.5*.33+8/9*.33,-7*.33);
\draw[very thin] (0,0) -- (-4.5*.83+25/27*.83,-7*.83);
\draw[very thin] (0,0) -- (-4.5*.27+26/27*.27,-7*.27);
\draw[very thin] (0,0) -- (-4.5*.95+1*.95,-7*.95);
\draw[very thin] (0,0) -- (-4.5*.02+2*.02,-7*.02);
\draw[very thin] (0,0) -- (-4.5*.88+55/27*.88,-7*.88);
\draw[very thin] (0,0) -- (-4.5*.41+56/27*.41,-7*.41);
\draw[very thin] (0,0) -- (-4.5*.97+19/9*.97,-7*.97);
\draw[very thin] (0,0) -- (-4.5*.16+20/9*.16,-7*.16);
\draw[very thin] (0,0) -- (-4.5*.93+61/27*.93,-7*.93);
\draw[very thin] (0,0) -- (-4.5*.99+62/27*.99,-7*.99);
\draw[very thin] (0,0) -- (-4.5*.37+7/3*.37,-7*.37);
\draw[very thin] (0,0) -- (-4.5*.51+8/3*.51,-7*.51);
\draw[very thin] (0,0) -- (-4.5*.05+73/27*.05,-7*.05);
\draw[very thin] (0,0) -- (-4.5*.82+74/27*.82,-7*.82);
\draw[very thin] (0,0) -- (-4.5*.09+25/9*.09,-7*.09);
\draw[very thin] (0,0) -- (-4.5*.74+26/9*.74,-7*.74);
\draw[very thin] (0,0) -- (-4.5*.94+79/27*.94,-7*.94);
\draw[very thin] (0,0) -- (-4.5*.45+80/27*.45,-7*.45);
\draw[very thin] (0,0) -- (-4.5*.92+3*.92,-7*.92);
\draw[very thin] (0,0) -- (-4.5*.3+6*.3,-7*.3);
\draw[very thin] (0,0) -- (-4.5*.78+163/27*.78,-7*.78);
\draw[very thin] (0,0) -- (-4.5*.16+164/27*.16,-7*.16);
\draw[very thin] (0,0) -- (-4.5*.4+55/9*.4,-7*.4);
\draw[very thin] (0,0) -- (-4.5*.62+56/9*.62,-7*.62);
\draw[very thin] (0,0) -- (-4.5*.86+169/27*.86,-7*.86);
\draw[very thin] (0,0) -- (-4.5*.2+170/27*.2,-7*.2);
\draw[very thin] (0,0) -- (-4.5*.89+19/3*.89,-7*.89);
\draw[very thin] (0,0) -- (-4.5*.98+20/3*.98,-7*.98);
\draw[very thin] (0,0) -- (-4.5*.62+181/27*.62,-7*.62);
\draw[very thin] (0,0) -- (-4.5*.8+182/27*.8,-7*.8);
\draw[very thin] (0,0) -- (-4.5*.34+61/9*.34,-7*.34);
\draw[very thin] (0,0) -- (-4.5*.82+62/9*.82,-7*.82);
\draw[very thin] (0,0) -- (-4.5*.53+187/27*.53,-7*.53);
\draw[very thin] (0,0) -- (-4.5*.42+188/27*.42,-7*.42);
\draw[very thin] (0,0) -- (-4.5*.11+7*.11,-7*.11);
\draw[very thin] (0,0) -- (-4.5*.7+8*.7,-7*.7);
\draw[very thin] (0,0) -- (-4.5*.67+217/27*.67,-7*.67);
\draw[very thin] (0,0) -- (-4.5*.98+218/27*.98,-7*.98);
\draw[very thin] (0,0) -- (-4.5*.21+73/9*.21,-7*.21);
\draw[very thin] (0,0) -- (-4.5*.48+74/9*.48,-7*.48);
\draw[very thin] (0,0) -- (-4.5*.08+223/27*.08,-7*.08);
\draw[very thin] (0,0) -- (-4.5*.65+224/27*.65,-7*.65);
\draw[very thin] (0,0) -- (-4.5*.13+25/3*.13,-7*.13);
\draw[very thin] (0,0) -- (-4.5*.28+26/3*.28,-7*.28);
\draw[very thin] (0,0) -- (-4.5*.23+235/27*.23,-7*.23);
\draw[very thin] (0,0) -- (-4.5*.06+236/27*.06,-7*.06);
\draw[very thin] (0,0) -- (-4.5*.64+79/9*.64,-7*.64);
\draw[very thin] (0,0) -- (-4.5*.7+80/9*.7,-7*.7);
\draw[very thin] (0,0) -- (-4.5*.93+241/27*.93,-7*.93);
\draw[very thin] (0,0) -- (-4.5*.84+242/27*.84,-7*.84);
\draw[very thin] (0,0) -- (-4.5*.46+9*.46,-7*.46);

\node at (0,-8.2) {$[0,1]$-EPG: the Lelek fan};
\end{scope}

\node at (4.5,-1.2) {$\{1\}$-EPG: the Cantor fan};

\node at (12.75,-3.5) {$\0$-EPG: the simple $n$-ods};

\begin{scope}[shift={(2,-7)}]

\begin{scope}[shift={(0,-.5)}]
\draw[thick] (0,0) -- (0,2);
\begin{scope}[rotate=120]
\draw[thick] (0,0) -- (0,2);
\end{scope}
\begin{scope}[rotate=240]
\draw[thick] (0,0) -- (0,2);
\end{scope}
\end{scope}

\begin{scope}[shift={(6.25,0)}]
\draw[thick] (0,0) -- (2,0);
\begin{scope}[rotate=90]
\draw[thick] (0,0) -- (2,0);
\end{scope}
\begin{scope}[rotate=180]
\draw[thick] (0,0) -- (2,0);
\end{scope}
\begin{scope}[rotate=270]
\draw[thick] (0,0) -- (2,0);
\end{scope}
\end{scope}

\begin{scope}[shift={(12.5,-.2)}]
\draw[thick] (0,0) -- (0,2);
\begin{scope}[rotate=72]
\draw[thick] (0,0) -- (0,2);
\end{scope}
\begin{scope}[rotate=144]
\draw[thick] (0,0) -- (0,2);
\end{scope}
\begin{scope}[rotate=216]
\draw[thick] (0,0) -- (0,2);
\end{scope}
\begin{scope}[rotate=288]
\draw[thick] (0,0) -- (0,2);
\end{scope}
\end{scope}

\begin{scope}[shift={(18.75,0)}]
\draw[thick] (0,0) -- (0,2);
\begin{scope}[rotate=60]
\draw[thick] (0,0) -- (0,2);
\end{scope}
\begin{scope}[rotate=120]
\draw[thick] (0,0) -- (0,2);
\end{scope}
\begin{scope}[rotate=180]
\draw[thick] (0,0) -- (0,2);
\end{scope}
\begin{scope}[rotate=240]
\draw[thick] (0,0) -- (0,2);
\end{scope}
\begin{scope}[rotate=300]
\draw[thick] (0,0) -- (0,2);
\end{scope}
\end{scope}

\node at (23,0) {\large $\dots$};

\end{scope}

\end{tikzpicture}
\end{center}

\noindent All of these examples are endpoint-homogeneous. In fact, as mentioned in the introduction, they are all $\frac{1}{3}$-homogeneous \cite{AHJ}. These examples are also unique up to homeomorphism:

\begin{observation}
Up to homeomorphism, the Cantor fan is the only $\{1\}$-EPG smooth fan, 
the Lelek fan is the only $[0,1]$-EPG smooth fan, and 
the simple $n$-ods are the only $\0$-EPG smooth fans.
\end{observation}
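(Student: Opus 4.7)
The plan is to handle the three assertions separately. The $\{1\}$-EPG statement was already established as Corollary~\ref{cor:Cantor}, so no further work is required there. For the $[0,1]$-EPG statement, the definition of $[0,1]$-EPG forces $B \cap \closure{E(F) \setminus B} = B$ for every blade $B \in \B(F)$, so every point of $B$ is a limit of endpoints from outside $B$; since every point of $F$ lies on some blade, $E(F)$ is dense in $F$. Because $F$ is a smooth fan, Theorem~\ref{thm:CF} realizes it as a subcontinuum of the Cantor fan, and Charatonik's characterization \cite{lelekUniqueChar} then identifies $F$ as the Lelek fan.

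The substantive case is the $\0$-EPG one. The strategy is to show that a $\0$-EPG smooth fan $F$ has only finitely many endpoints; once this is known, $F$ is the union of its finitely many blades, which pairwise meet only at the top $t$, and hence $F$ is a simple $n$-od for some $n \geq 3$ (the lower bound because $t$ must remain a ramification point).

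Suppose for contradiction that $E(F)$ is infinite. By compactness of $F$, some sequence $\seq{e_n}{n \in \N}$ of distinct endpoints converges to a point $p \in F$. A standard consequence of hereditary unicoherence is that every point of a fan lies on some blade (extend a maximal arc containing $[t,p]$ to one whose non-$t$ endpoint lies in $E(F)$), so fix a blade $B_p$ with $p \in B_p$. Its tip is the unique endpoint of $F$ lying on $B_p$, so by discarding at most one term of the sequence we may assume $e_n \in E(F) \setminus B_p$ for every $n$ while still $e_n \to p \in B_p$. This gives $p \in B_p \cap \closure{E(F) \setminus B_p}$, contradicting the $\0$-EPG condition, which via Lemma~\ref{setA_fancyA_EPG} forces this intersection to be empty.

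The main obstacle is routine but worth care: one must justify that every point of a smooth fan lies on some blade, and that a blade contains a unique endpoint of $F$. Both assertions follow from hereditary unicoherence together with the representation of smooth fans as subcontinua of the Cantor fan (Theorem~\ref{thm:CF}), and will be dispatched in a sentence or two in the final write-up.
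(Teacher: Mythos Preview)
Your proposal is correct and follows essentially the same approach as the paper. The paper's proof is terser---for the $\0$-EPG case it simply asserts that $E(F)$ has no accumulation points and hence is finite by compactness---while you spell out the contradiction via a convergent sequence landing on some blade; but the underlying idea is identical. One small remark: you do not actually need Lemma~\ref{setA_fancyA_EPG} in the $\0$-EPG step, since the definition of $\0$-EPG already gives $B \cap \closure{E(F)\setminus B} = \phi[\0] = \0$ directly for every blade $B$.
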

\begin{proof}
For the Cantor fan, this follows from Corollary~\ref{cor:Cantor}. 
For the Lelek fan, this follows from Charatonik's characterization of the Lelek fan in \cite{lelekUniqueChar}. 
For the $n$-ods, note that if $F$ is a $\0$-EPG fan, then $E(F)$ has no accumulation points. Because $F$ is compact, this implies $E(F)$ is finite, which implies $F$ is a simple $n$-od.
\end{proof}

It remains to find $X$-EPG, endpoint-homogeneous fans for sets of type $(2)$ and $(3)$ in the list from Corollary~\ref{cor:list0}. 
Recall from Theorems~\ref{thm:Countable} and \ref{thm:Uncountable} that the $X$-EPG fans with $X$ of type $(2)$ should have countably many blades, while the $X$-EPG fans with $X$ of type $(3)$ should have uncountably many blades. We cover the case of countably many blades first.

\begin{theorem}\label{thm:EPG}
If $X$ is a closed subset of $[0,1]$ with $1 \notin X$, then there is an $X$-EPG, endpoint-homogeneous smooth fan.
\end{theorem}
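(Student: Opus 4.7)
The plan is to construct, for each such $X$, a comb $C$ (in the sense of Definition~\ref{def:comb}) whose fan $F_C$ is $X$-EPG and endpoint-homogeneous; smoothness then comes for free by Theorem~\ref{thm:FC}. Since $1 \notin X$, Theorem~\ref{thm:Countable} already forces $F_C$ to have countably many blades, so $C$ will too. The degenerate case $X = \{0\}$ is handled directly by the star fan (the one-point compactification of countably many copies of $[0, 1)$), which is realized as $F_C$ for a comb whose blade positions $x_n \in K$ converge to some $x^{*} \in K$ and whose blade heights $h_n$ tend to $0$.

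For $X \neq \{0\}$, I would index the blades by the countably-branching tree $T = \N^{<\omega}$ so as to maximize symmetry, and embed $T$ into the Cantor set $K$ so that each child position $x_{s \cdot n}$ converges to $x_s$ as $n \to \infty$, while any sequence of positions converging to $x_s$ must eventually lie in the subtree below $s$ (a standard ``$\sigma$-style'' binary-encoded embedding does this). Fix a sequence $(y_n)_{n \in \N} \subseteq X$ that is dense in $X$ and hits each point of $X$ infinitely often, and define heights $h_s$ so that:
\begin{enumerate}
\item[(a)] the cluster set of $(h_{s \cdot n})_{n \in \N}$ equals $h_s \cdot X$, giving the $X$-shaped accumulation at $B_s$ from its direct children; and
\item[(b)] the heights of descendants of depth $\geq 2$ from $s$, viewed through positions converging to $x_s$, cluster only at the single value $0$.
\end{enumerate}

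Granting such a height function, a direct computation via Lemma~\ref{setA_fancyA_EPG} gives $B_s \cap \closure{\mathcal E(C) \setminus B_s} = \{x_s\} \times (h_s X)$ for every $s$, so $F_C$ is $X$-EPG. Endpoint-homogeneity should then follow from the combinatorial symmetry of $T$: given any two $s, s' \in T$, a tree automorphism matching the subtrees rooted at $s$ and $s'$, combined with suitable rescaling of heights, can be realized as a self-homeomorphism of $F_C$ swapping the corresponding tips $e_s$ and $e_{s'}$.

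The hard part is condition (b). The most naive multiplicative recursion $h_{s \cdot n} = h_s \cdot y_n$ forces the height of a depth-$k$ descendant to be a product of $k$ terms from $(y_n)$, and these products cluster onto the multiplicative closure of $X$ rather than onto $X$ itself; for instance, for $X = \{0, \tfrac{1}{2}\}$ one would spuriously pick up $\tfrac{1}{4}, \tfrac{1}{8}, \ldots$ in every accumulation set, corrupting the EPG structure. The recursion must therefore be genuinely non-multiplicative: the ``interior accumulation targets'' prescribed for each $B_{s \cdot n}$ should shrink to $0$ as $n \to \infty$ (independently of the scale $h_s$), so that all depth-$\geq 2$ descendants seen from $B_s$ converge only to $(x_s, 0)$, which is permitted since $0 \in h_s X$. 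Writing down such a recursion explicitly, keeping it compatible with enough tree-theoretic symmetry to support endpoint-homogeneity, is the technical heart of the argument.
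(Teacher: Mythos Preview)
Your outline matches the paper's approach almost exactly: blades indexed by $\N^{<\omega}$, positions embedded in $K$ so that children converge to their parent and nothing else does, a sequence $(y_n)$ dense in $X$ governing heights, and self-similarity of subtrees giving endpoint-homogeneity. You also correctly isolate the real obstacle---the naive recursion $h_{s\cdot n}=h_s\,y_n$ contaminates each accumulation set with the multiplicative semigroup generated by $X$---and correctly state the required fix (depth-$\geq 2$ descendants seen from $s$ must accumulate only at height $0$). What is missing is the actual formula, which you yourself flag as ``the technical heart.''

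The paper's resolution is to set $M=\max X<1$ and define the height at node $(n_1,\dots,n_k)$ to be
\[
h_{(n_1,\dots,n_k)} \;=\; M^{\,n_1+\cdots+n_{k-1}}\,\prod_{i=1}^k y_{n_i}.
\]
The extra damping factor $M^{\,n_1+\cdots+n_{k-1}}$ is the whole trick. Any sequence of positions converging to $x_s$ from outside the immediate children must have the next-level index $n_{k+1}\to\infty$ (this is forced by the ternary embedding), and then every such descendant has height $\leq M^{n_{k+1}}\to 0$, securing your condition~(b). Meanwhile the direct children of the root have heights exactly $y_{n_1}$, whose closure is $X$, giving condition~(a) there; for other nodes one gets $h_s\cdot M^{n_k}\cdot X$, which is handled by a piecewise-linear reparametrization of the blade. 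Crucially, this particular formula makes the subtree above each node an \emph{exact} affine rescaling of all of $C_{>0}$ (after a single kink on the leftmost blade), so the endpoint-homogeneity argument reduces to composing two such rescalings---a genuinely ad hoc damping scheme would likely destroy this rigidity and make homogeneity hard to recover. One minor omission: you should also dispose of $X=\emptyset$ (the $n$-ods) alongside $X=\{0\}$.
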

\begin{proof}
If $X = \0$ or $X = \{0\}$, then we already have examples of $X$-EPG smooth fans: the simple $n$-ods for $X = \0$, and the star for $X = \{0\}$ (see the comments following Corollary~\ref{cor:4kinds}). Furthermore, it is clear that these examples are endpoint-homogeneous. 
Thus, for the remainder of the proof, fix some closed $X \sub [0,1]$ such that $0 \in X$, $1 \notin X$, and $X \cap (0,1) \neq \0$. 

Let $D$ be a countable dense subset of $X \setminus \{0\}$ (which is nonempty, by the assumption of the previous paragraph), and let $\seq{y_n}{n \in \N}$ be an infinite sequence such that $y_n \in D$ for every $n$, and for every $d \in D$ there are infinitely many $n$ with $y_n = d$. 

We now describe the construction of a comb over $K$ such that the corresponding fan will be $X$-EPG and endpoint-homogeneous. To begin, let $L^0 = \{0\}\times [0,1]$. Let $M = \max (X)$, and note that $M < 1$. Then:
\begin{enumerate}
	\item For each $n_1\in\mathbb{N}$, let
	$$
	L_{n_1}^1 = \left\{\frac{2}{3^{n_1}}\right\}\times \big[ 0,y_{n_1} \big]
	.$$
	\item For each $n_1$, $n_2\in\mathbb{N}$, let
	$$
	L_{n_1,n_2}^2 = \left\{\frac{2}{3^{n_1}}+\frac{2}{3^{n_1+n_2}}\right\}\times \big[ 0,M^{n_1}y_{n_1}y_{n_2} \big]
	.$$
	\item For each $n_1$, $n_2$, $n_3\in\mathbb{N}$, let
	$$
	L_{n_1,n_2,n_3}^3 = \left\{\frac{2}{3^{n_1}}+\frac{2}{3^{n_1+n_2}}+
	\frac{2}{3^{n_1+n_2+n_3}}\right\}\times \big[ 0,M^{n_1+n_2}y_{n_1}y_{n_2}y_{n_3} \big]
	.$$
	\item More generally, for any given $k \in \N$ and for $n_1$, $n_2,\ldots ,n_k\in\mathbb{N}$, let 
	$$
	L_{n_1,n_2,\ldots,n_k}^k = \left\{2\cdot\sum_{i=1}^k \left(\frac{1}{3}\right)^{\sum_{j=1}^in_j}\right\}\times \left[0 \,,\, M^{\sum_{i=1}^{k-1}n_i}\cdot \prod_{i=1}^ky_{n_i}\right]
	.$$
\end{enumerate}
For each $N \in \N$, let $C^N$ be the result of the first $N$ stages of this construction, 
together with $I \times \{0\}$: i.e., 
$$C^N \,=\, I \times \{0\} \cup L^0 \cup \bigcup \set{L^k_{n_1,n_2,\dots,n_k}}{k \leq N \text{ and } (n_1,n_2,\dots,n_k) \in \N^k}.$$
Finally, let $C = \bigcup_{N \in \N}C^N$. 
This is illustrated below for $X = \{ 0,\nicefrac{1}{2},\nicefrac{3}{4}\}$.

\begin{center}
\begin{tikzpicture}[xscale=4.75,yscale=4]

\begin{scope}[shift={(0,0)}]

\draw[thick] (-.003,-.003) -- (1,-.003);

\draw[thick] (0,0) -- (0,1);

\draw (2/3,0) -- (2/3,.75);
\draw (2/9,0) -- (2/9,.5);
\draw (2/27,0) -- (2/27,.75);
\draw (2/81,0) -- (2/81,.5);
\draw (2/243,0) -- (2/243,.75);
\draw (2/729,0) -- (2/729,.5);

\node at (.5,1) {\footnotesize $C^1$};

\end{scope}

\begin{scope}[shift={(1.5,0)}]

\draw[thick] (-.003,-.003) -- (1,-.003);

\draw[thick] (0,0) -- (0,1);

\draw (2/3,0) -- (2/3,.75);
\draw (2/9,0) -- (2/9,.5);
\draw (2/27,0) -- (2/27,.75);
\draw (2/81,0) -- (2/81,.5);
\draw (2/243,0) -- (2/243,.75);
\draw (2/729,0) -- (2/729,.5);

\draw[thin] (2/3+2/9,0) -- (2/3+2/9,.75*.75*.75);
\draw[thin] (2/3+2/27,0) -- (2/3+2/27,.75*.75*.5);
\draw[thin] (2/3+2/81,0) -- (2/3+2/81,.75*.75*.75);
\draw[thin] (2/3+2/243,0) -- (2/3+2/243,.75*.75*.5);
\draw[thin] (2/3+2/729,0) -- (2/3+2/729,.75*.75*.75);

\draw[thin] (2/9+2/27,0) -- (2/9+2/27,.75^2*.5*.75);
\draw[thin] (2/9+2/81,0) -- (2/9+2/81,.75^2*.5*.5);
\draw[thin] (2/9+2/243,0) -- (2/9+2/243,.75^2*.5*.75);
\draw[thin] (2/9+2/729,0) -- (2/9+2/729,.75^2*.5*.5);

\draw[thin] (2/27+2/81,0) -- (2/27+2/81,.75^3*.75*.75);
\draw[thin] (2/27+2/243,0) -- (2/27+2/243,.75^3*.75*.5);
\draw[thin] (2/27+2/729,0) -- (2/27+2/729,.75^3*.75*.75);

\draw[thin] (2/81+2/243,0) -- (2/81+2/243,.75^4*.5*.75);
\draw[thin] (2/81+2/729,0) -- (2/81+2/729,.75^4*.5*.5);

\draw[thin] (2/243+2/729,0) -- (2/243+2/729,.75^5*.75*.75);

\node at (.5,1) {\footnotesize $C^2$};

\end{scope}

\begin{scope}[shift={(0,-1.4)}]

\draw[thick] (-.003,-.003) -- (1,-.003);

\draw[thick] (0,0) -- (0,1);

\draw (2/3,0) -- (2/3,.75);
\draw (2/9,0) -- (2/9,.5);
\draw (2/27,0) -- (2/27,.75);
\draw (2/81,0) -- (2/81,.5);
\draw (2/243,0) -- (2/243,.75);
\draw (2/729,0) -- (2/729,.5);

\draw[thin] (2/3+2/9,0) -- (2/3+2/9,.75*.75*.75);
\draw[thin] (2/3+2/27,0) -- (2/3+2/27,.75*.75*.5);
\draw[thin] (2/3+2/81,0) -- (2/3+2/81,.75*.75*.75);
\draw[thin] (2/3+2/243,0) -- (2/3+2/243,.75*.75*.5);
\draw[thin] (2/3+2/729,0) -- (2/3+2/729,.75*.75*.75);

\draw[thin] (2/9+2/27,0) -- (2/9+2/27,.75^2*.5*.75);
\draw[thin] (2/9+2/81,0) -- (2/9+2/81,.75^2*.5*.5);
\draw[thin] (2/9+2/243,0) -- (2/9+2/243,.75^2*.5*.75);
\draw[thin] (2/9+2/729,0) -- (2/9+2/729,.75^2*.5*.5);

\draw[thin] (2/27+2/81,0) -- (2/27+2/81,.75^3*.75*.75);
\draw[thin] (2/27+2/243,0) -- (2/27+2/243,.75^3*.75*.5);
\draw[thin] (2/27+2/729,0) -- (2/27+2/729,.75^3*.75*.75);

\draw[thin] (2/81+2/243,0) -- (2/81+2/243,.75^4*.5*.75);
\draw[thin] (2/81+2/729,0) -- (2/81+2/729,.75^4*.5*.5);

\draw[thin] (2/243+2/729,0) -- (2/243+2/729,.75^5*.75*.75);

\draw[very thin] (2/3+2/9+2/27,0) -- (2/3+2/9+2/27,.75^2*.75*.75*.75);
\draw[very thin] (2/3+2/9+2/81,0) -- (2/3+2/9+2/81,.75^2*.75*.75*.5);
\draw[very thin] (2/3+2/9+2/243,0) -- (2/3+2/9+2/243,.75^2*.75*.75*.75);
\draw[very thin] (2/3+2/9+2/729,0) -- (2/3+2/9+2/729,.75^2*.75*.75*.5);

\draw[very thin] (2/3+2/27+2/81,0) -- (2/3+2/27+2/81,.75^3*.75*.5*.75);
\draw[very thin] (2/3+2/27+2/243,0) -- (2/3+2/27+2/243,.75^3*.75*.5*.5);
\draw[very thin] (2/3+2/27+2/729,0) -- (2/3+2/27+2/729,.75^3*.75*.5*.75);

\draw[very thin] (2/3+2/81+2/243,0) -- (2/3+2/81+2/243,.75^4*.75*.75*.75);
\draw[very thin] (2/3+2/81+2/729,0) -- (2/3+2/81+2/729,.75^4*.75*.75*.5);

\draw[very thin] (2/3+2/243+2/729,0) -- (2/3+2/243+2/729,.75^5*.75*.5*.75);

\draw[very thin] (2/9+2/27+2/81,0) -- (2/9+2/27+2/81,.75^3*.5*.75*.75);
\draw[very thin] (2/9+2/27+2/243,0) -- (2/9+2/27+2/243,.75^3*.5*.75*.5);
\draw[very thin] (2/9+2/27+2/729,0) -- (2/9+2/27+2/729,.75^3*.5*.75*.75);

\draw[very thin] (2/9+2/81+2/243,0) -- (2/9+2/81+2/243,.75^4*.5*.5*.75);
\draw[very thin] (2/9+2/81+2/729,0) -- (2/9+2/81+2/729,.75^4*.5*.5*.5);

\draw[very thin] (2/9+2/243+2/729,0) -- (2/9+2/243+2/729,.75^5*.5*.75*.75);

\draw[very thin] (2/27+2/81+2/243,0) -- (2/27+2/81+2/243,.75^4*.75*.75*.75);
\draw[very thin] (2/27+2/81+2/729,0) -- (2/27+2/81+2/729,.75^4*.75*.75*.5);

\draw[very thin] (2/27+2/243+2/729,0) -- (2/27+2/243+2/729,.75^5*.75*.5*.75);

\draw[very thin] (2/81+2/243+2/729,0) -- (2/81+2/243+2/729,.75^5*.5*.75*.75);

\node at (.5,1) {\footnotesize $C^3$};

\end{scope}

\begin{scope}[shift={(1.5,-1.4)}]

\draw[thick] (-.003,-.0035) -- (1,-.0035);

\draw[thick] (0,0) -- (0,1);

\draw (2/3,0) -- (2/3,.75);
\draw (2/9,0) -- (2/9,.5);
\draw (2/27,0) -- (2/27,.75);
\draw (2/81,0) -- (2/81,.5);
\draw (2/243,0) -- (2/243,.75);
\draw (2/729,0) -- (2/729,.5);

\draw[thin] (2/3+2/9,0) -- (2/3+2/9,.75*.75*.75);
\draw[thin] (2/3+2/27,0) -- (2/3+2/27,.75*.75*.5);
\draw[thin] (2/3+2/81,0) -- (2/3+2/81,.75*.75*.75);
\draw[thin] (2/3+2/243,0) -- (2/3+2/243,.75*.75*.5);
\draw[thin] (2/3+2/729,0) -- (2/3+2/729,.75*.75*.75);

\draw[thin] (2/9+2/27,0) -- (2/9+2/27,.75^2*.5*.75);
\draw[thin] (2/9+2/81,0) -- (2/9+2/81,.75^2*.5*.5);
\draw[thin] (2/9+2/243,0) -- (2/9+2/243,.75^2*.5*.75);
\draw[thin] (2/9+2/729,0) -- (2/9+2/729,.75^2*.5*.5);

\draw[thin] (2/27+2/81,0) -- (2/27+2/81,.75^3*.75*.75);
\draw[thin] (2/27+2/243,0) -- (2/27+2/243,.75^3*.75*.5);
\draw[thin] (2/27+2/729,0) -- (2/27+2/729,.75^3*.75*.75);

\draw[thin] (2/81+2/243,0) -- (2/81+2/243,.75^4*.5*.75);
\draw[thin] (2/81+2/729,0) -- (2/81+2/729,.75^4*.5*.5);

\draw[thin] (2/243+2/729,0) -- (2/243+2/729,.75^5*.75*.75);

\draw[very thin] (2/3+2/9+2/27,0) -- (2/3+2/9+2/27,.75^2*.75*.75*.75);
\draw[very thin] (2/3+2/9+2/81,0) -- (2/3+2/9+2/81,.75^2*.75*.75*.5);
\draw[very thin] (2/3+2/9+2/243,0) -- (2/3+2/9+2/243,.75^2*.75*.75*.75);
\draw[very thin] (2/3+2/9+2/729,0) -- (2/3+2/9+2/729,.75^2*.75*.75*.5);

\draw[very thin] (2/3+2/27+2/81,0) -- (2/3+2/27+2/81,.75^3*.75*.5*.75);
\draw[very thin] (2/3+2/27+2/243,0) -- (2/3+2/27+2/243,.75^3*.75*.5*.5);
\draw[very thin] (2/3+2/27+2/729,0) -- (2/3+2/27+2/729,.75^3*.75*.5*.75);

\draw[very thin] (2/3+2/81+2/243,0) -- (2/3+2/81+2/243,.75^4*.75*.75*.75);
\draw[very thin] (2/3+2/81+2/729,0) -- (2/3+2/81+2/729,.75^4*.75*.75*.5);

\draw[very thin] (2/3+2/243+2/729,0) -- (2/3+2/243+2/729,.75^5*.75*.5*.75);

\draw[very thin] (2/9+2/27+2/81,0) -- (2/9+2/27+2/81,.75^3*.5*.75*.75);
\draw[very thin] (2/9+2/27+2/243,0) -- (2/9+2/27+2/243,.75^3*.5*.75*.5);
\draw[very thin] (2/9+2/27+2/729,0) -- (2/9+2/27+2/729,.75^3*.5*.75*.75);

\draw[very thin] (2/9+2/81+2/243,0) -- (2/9+2/81+2/243,.75^4*.5*.5*.75);
\draw[very thin] (2/9+2/81+2/729,0) -- (2/9+2/81+2/729,.75^4*.5*.5*.5);

\draw[very thin] (2/9+2/243+2/729,0) -- (2/9+2/243+2/729,.75^5*.5*.75*.75);

\draw[very thin] (2/27+2/81+2/243,0) -- (2/27+2/81+2/243,.75^4*.75*.75*.75);
\draw[very thin] (2/27+2/81+2/729,0) -- (2/27+2/81+2/729,.75^4*.75*.75*.5);

\draw[very thin] (2/27+2/243+2/729,0) -- (2/27+2/243+2/729,.75^5*.75*.5*.75);

\draw[very thin] (2/81+2/243+2/729,0) -- (2/81+2/243+2/729,.75^5*.5*.75*.75);

\draw[very thin] (2/3+2/9+2/27+2/81,0) -- (2/3+2/9+2/27+2/81,.75^3*.75*.75*.75*.75);
\draw[very thin] (2/3+2/9+2/27+2/243,0) -- (2/3+2/9+2/27+2/243,.75^3*.75*.75*.75*.5);
\draw[very thin] (2/3+2/9+2/27+2/729,0) -- (2/3+2/9+2/27+2/729,.75^3*.75*.75*.75*.75);

\draw[very thin] (2/3+2/9+2/27+2/81+2/243,0) -- (2/3+2/9+2/27+2/81+2/243,.75^4*.75*.75*.75*.75*.75);
\draw[very thin] (2/3+2/9+2/27+2/81+2/729,0) -- (2/3+2/9+2/27+2/81+2/729,.75^4*.75*.75*.75*.75*.5);

\draw[very thin] (2/3+2/9+2/27+2/243+2/729,0) -- (2/3+2/9+2/27+2/243+2/729,.75^5*.75*.75*.75*.5*.75);

\draw[very thin] (2/3+2/9+2/27+2/81+2/243+2/729,0) -- (2/3+2/9+2/27+2/81+2/243+2/729,.75^5*.75*.75*.75*.75*.75*.75);

\draw[very thin] (2/3+2/27+2/81+2/243,0) -- (2/3+2/27+2/81+2/243,.75^4*.75*.5*.75*.75);
\draw[very thin] (2/3+2/27+2/81+2/729,0) -- (2/3+2/27+2/81+2/729,.75^4*.75*.5*.75*.5);
\draw[very thin] (2/3+2/27+2/81+2/243+2/729,0) -- (2/3+2/27+2/81+2/243+2/729,.75^5*.75*.5*.75*.75*.75);
\draw[very thin] (2/3+2/27+2/243+2/729,0) -- (2/3+2/27+2/243+2/729,.75^5*.75*.5*.5*.75);

\draw[very thin] (2/3+2/81+2/243+2/729,0) -- (2/3+2/81+2/243+2/729,.75^5*.75*.75*.75*.75);

\draw[very thin] (2/27+2/81+2/243+2/729,0) -- (2/27+2/81+2/243+2/729,.75^5*.75*.75*.75*.75);

\draw[very thin] (2/9+2/81+2/243+2/729,0) -- (2/9+2/81+2/243+2/729,.75^5*.5*.5*.75*.75);

\draw[very thin] (2/9+2/27+2/243+2/729,0) -- (2/9+2/27+2/243+2/729,.75^5*.5*.75*.5*.75);
\draw[very thin] (2/9+2/27+2/81+2/729,0) -- (2/9+2/27+2/81+2/729,.75^4*.5*.75*.75*.5);
\draw[very thin] (2/9+2/27+2/81+2/243+2/729,0) -- (2/9+2/27+2/81+2/243+2/729,.75^5*.5*.75*.75*.75*.75);
\draw[very thin] (2/9+2/27+2/81+2/243,0) -- (2/9+2/27+2/81+2/243,.75^4*.5*.75*.75*.75);

\draw[very thin] (2/3+2/9+2/243+2/729,0) -- (2/3+2/9+2/243+2/729,.75^5*.75*.75*.75*.75);
\draw[very thin] (2/3+2/9+2/81+2/243+2/729,0) -- (2/3+2/9+2/81+2/243+2/729,.75^5*.75*.75*.5*.75*.75);
\draw[very thin] (2/3+2/9+2/81+2/243,0) -- (2/3+2/9+2/81+2/243,.75^4*.75*.75*.5*.75);
\draw[very thin] (2/3+2/9+2/81+2/729,0) -- (2/3+2/9+2/81+2/729,.75^4*.75*.75*.5*.5);


\node at (.5,1) {\footnotesize $C$};

\end{scope}

\end{tikzpicture}
\end{center}

Observe that the set of all $x \in I$ where this construction places a vertical arc is exactly 
$$\textstyle \set{\sum_{i \in F} \frac{2}{3^i}}{F \text{ is a finite subset of }\N}.$$
To put it more simply, this is just the set of all left-hand endpoints of the Cantor set $K$. This is a countable dense subset of $K$. 

Let us check that $C$ is closed. Let $\seq{z_n}{n \in \N}$ be a sequence of points in $C$; we wish to find a convergent subsequence. 
If $\liminf_{n \to \infty} \pi_2(z_n) = 0$, then it is not difficult to see that some subsequence converges to a point of $I \times \{0\}$. So let us assume $\liminf_{n \to \infty} \pi_2(z_n) = \e > 0$. The exponent on $M$ in the definition of $L^k_{n_1,n_2,\dots,n_k}$ is at least $k-1$. Because $M < 0$, this means there is some $N \in \N$ such that $M^N < \e$, which implies  
$\set{z_n}{n \in \N} \sub C^N.$
But it is not difficult to see that $C^N$ is compact. So in this case, some subsequence of the $z_n$ converges to a point of $C^N$.

Thus $C$ is closed, and it is easy to see that $C$ satisfies every other part of Definition~\ref{def:comb}. Hence $C$ is a comb, and collapsing the bottom line of $C$ to a point yields a smooth fan $F_C$. 
To finish the proof, we must show that this fan is an endpoint-homogeneous $X$-EPG fan. 

For each vertical arc $L^k_{n_1,\dots,n_k}$ produced by this construction, let 
$$x^k_{n_1,\dots,n_k} \,=\,\textstyle 2\cdot\sum_{i=1}^k \left(\frac{1}{3}\right)^{\sum_{j=1}^in_j},$$
i.e., $x^k_{n_1,\dots,n_k}$ is the $X$-coordinate of the vertical arc $L^k_{n_1,\dots,n_k}$. Let 
$$e_{n_1,\ldots,n_k}^k \,=\, \textstyle M^{\sum_{i=1}^{k-1}n_i}\cdot \prod_{i=1}^ky_{n_i},$$
or in other words, $e^k_{n_1,\dots,n_k}$ is the $Y$-coordinate of the tip of the blade at $x^k_{n_1,\dots,n_k}$. Note that $(x^k_{n_1,\dots,n_k},e^k_{n_1,\dots,n_k}) \in \mathcal E(C)$. 
Let
$$U^k_{n_1,\dots,n_k} \,=\, \textstyle \Big[ x^k_{n_1,\dots,n_k} \,,\, x^k_{n_1,\dots,n_k}+\left( \frac{1}{3} \right)^{\sum_{i=1}^k n_i} \Big] \cap K.$$
Because $x^k_{n_1,\dots,n_k}$ is the left-hand endpoint of an interval in the Cantor set $K$, this is simply the basic clopen subset of $K$ with left-hand endpoint $x^k_{n_1,\dots,n_k}$ and with width $\left( \frac{1}{3} \right)^{\sum_{i=1}^k n_i}$. This particular width for $U^k_{n_1,\dots,n_k}$ is chosen precisely so as to make the following observation true.

\vspace{3mm}

\noindent \textbf{Observation:} Let $x^k_{n_1,\dots,n_k}$ and $x^\ell_{m_1,\dots,m_\ell}$ be two points at which $C$ has a blade. Then $x^\ell_{m_1,\dots,m_\ell} \in U^k_{n_1,\dots,n_k}$ if and only if 
$\ell \geq k$ and $(n_1,n_2,\dots,n_k) = (m_1,m_2,\dots,m_k)$ (but $m_{k+1},\dots,m_\ell$ may be any natural numbers). 

\vspace{3mm}

Let $C_{>0} = \bigcup_{y > 0}C_y = C \cap (0,1]$. For each point $x^k_{n_1,\dots,n_k}$ at which $C$ has a blade, let $V^k_{n_1,\dots,n_k} = \big( U^k_{n_1,\dots,n_k} \times (0,1] \big) \cap C_{>0}$. 

\vspace{3mm}

\noindent \textbf{Claim:} Let $x^k_{n_1,\dots,n_k}$ be a point at which $C$ has a blade. There is a homeomorphism $h^k_{n_1,\dots,n_k}: V^k_{n_1,\dots,n_k} \to C_{>0}$ mapping the tip of $B_{x^k_{n_1,\dots,n_k}}$ to $(1,0)$. 

\vspace{3mm}

\noindent \emph{Proof of Claim:} 
This claim is proved by exploiting the fractal-like self-similarities of $C_{>0}$. The idea is simply that $V^k_{n_1,\dots,n_k}$ is just a translated, dilated copy of $C_{>0}$, but where the dilation is not quite even: the left-most blade of $V^k_{n_1,\dots,n_k}$ is dilated by a larger factor than every other blade.

To make this precise, define
\begin{align*}
m^k_{n_1,\dots,n_k}  \,=\, \max \set{y}{(x^k_{n_1,\dots,n_k},y) \in \closure{\mathcal E(C) \setminus B_{x^k_{n_1,\dots,n_k}}}} 
 \textstyle \,=\, M^{\sum_{i=1}^{k}n_i}\cdot M \cdot \prod_{i=1}^ky_{n_i}.
\end{align*}
In other words, this is the $Y$-coordinate of the highest point on $B_{x^k_{n_1,\dots,n_k}}$ (the left-most blade in $V^k_{n_1,\dots,n_k}$) that is in the closure of the other blades. 
Define 
\begin{align*} h^k_{n_1,\dots,n_k}(x,y) &\,=\, \textstyle \Bigg( 3^{\sum_{i=1}^k n_i} \Big( x-x^k_{n_1,\dots,n_k} \Big) \,,\, \frac{1}{M^{\sum_{i=1}^{k}n_i}\cdot \prod_{i=1}^ky_{n_i}}y \Bigg) \\
&\,=\, \textstyle \left( 3^{\sum_{i=1}^k n_i} \Big( x-x^k_{n_1,\dots,n_k} \Big) \,,\, \frac{M}{m^k_{n_1,\dots,n_k}}y \right)
\end{align*}
for all $(x,y) \in V^k_{n_1,\dots,n_k}$ such that $y \leq m^k_{n_1,\dots,n_k}$. This condition, that $y \leq m^k_{n_1,\dots,n_k}$, is true of every point of $V^k_{n_1,\dots,n_k}$ except for those in the topmost part of the blade $B_{x^k_{n_1,\dots,n_k}}$, specifically the points in $\{x^k_{n_1,\dots,n_k}\} \times (m^k_{n_1,\dots,n_k},e^k_{n_1,\dots,n_k}]$. Define $h^k_{n_1,\dots,n_k}$ on this part of this blade simply by mapping it linearly onto $\{0\} \times (M,1]$. That is, define
$$h^k_{n_1,\dots,n_k}(x,y) \,=\, \textstyle \left( 0 \,,\, M + \frac{1-M}{e^k_{n_1,\dots,n_k}-m^k_{n_1,\dots,n_k}} \Big( y-m^k_{n_1,\dots,n_k} \Big) \right)$$
if $x = x^k_{n_1,\dots,n_k}$ and $y > m^k_{n_1,\dots,n_k}$.

\begin{center}
\begin{tikzpicture}[scale=4]

\node at (.5,.95) {\footnotesize $C_{>0}$};

\draw[thin] (0,0) -- (0,1);

\draw[very thin] (2/3,0) -- (2/3,.75);
\draw[very thin] (2/9,0) -- (2/9,.5);
\draw[very thin] (2/27,0) -- (2/27,.75);
\draw[very thin] (2/81,0) -- (2/81,.5);
\draw[very thin] (2/243,0) -- (2/243,.75);
\draw[very thin] (2/729,0) -- (2/729,.5);

\draw[very thin] (2/3+2/9,0) -- (2/3+2/9,.75*.75*.75);
\draw[very thin] (2/3+2/27,0) -- (2/3+2/27,.75*.75*.5);
\draw[very thin] (2/3+2/81,0) -- (2/3+2/81,.75*.75*.75);
\draw[very thin] (2/3+2/243,0) -- (2/3+2/243,.75*.75*.5);
\draw[very thin] (2/3+2/729,0) -- (2/3+2/729,.75*.75*.75);

\draw[very thin] (2/9+2/27,0) -- (2/9+2/27,.75^2*.5*.75);
\draw[very thin] (2/9+2/81,0) -- (2/9+2/81,.75^2*.5*.5);
\draw[very thin] (2/9+2/243,0) -- (2/9+2/243,.75^2*.5*.75);
\draw[very thin] (2/9+2/729,0) -- (2/9+2/729,.75^2*.5*.5);

\draw[very thin] (2/27+2/81,0) -- (2/27+2/81,.75^3*.75*.75);
\draw[very thin] (2/27+2/243,0) -- (2/27+2/243,.75^3*.75*.5);
\draw[very thin] (2/27+2/729,0) -- (2/27+2/729,.75^3*.75*.75);

\draw[very thin] (2/81+2/243,0) -- (2/81+2/243,.75^4*.5*.75);
\draw[very thin] (2/81+2/729,0) -- (2/81+2/729,.75^4*.5*.5);

\draw[very thin] (2/243+2/729,0) -- (2/243+2/729,.75^5*.75*.75);

\draw[very thin] (2/3+2/9+2/27,0) -- (2/3+2/9+2/27,.75^2*.75*.75*.75);
\draw[very thin] (2/3+2/9+2/81,0) -- (2/3+2/9+2/81,.75^2*.75*.75*.5);
\draw[very thin] (2/3+2/9+2/243,0) -- (2/3+2/9+2/243,.75^2*.75*.75*.75);
\draw[very thin] (2/3+2/9+2/729,0) -- (2/3+2/9+2/729,.75^2*.75*.75*.5);

\draw[very thin] (2/3+2/27+2/81,0) -- (2/3+2/27+2/81,.75^3*.75*.5*.75);
\draw[very thin] (2/3+2/27+2/243,0) -- (2/3+2/27+2/243,.75^3*.75*.5*.5);
\draw[very thin] (2/3+2/27+2/729,0) -- (2/3+2/27+2/729,.75^3*.75*.5*.75);

\draw[very thin] (2/3+2/81+2/243,0) -- (2/3+2/81+2/243,.75^4*.75*.75*.75);
\draw[very thin] (2/3+2/81+2/729,0) -- (2/3+2/81+2/729,.75^4*.75*.75*.5);

\draw[very thin] (2/3+2/243+2/729,0) -- (2/3+2/243+2/729,.75^5*.75*.5*.75);

\draw[very thin] (2/9+2/27+2/81,0) -- (2/9+2/27+2/81,.75^3*.5*.75*.75);
\draw[very thin] (2/9+2/27+2/243,0) -- (2/9+2/27+2/243,.75^3*.5*.75*.5);
\draw[very thin] (2/9+2/27+2/729,0) -- (2/9+2/27+2/729,.75^3*.5*.75*.75);

\draw[very thin] (2/9+2/81+2/243,0) -- (2/9+2/81+2/243,.75^4*.5*.5*.75);
\draw[very thin] (2/9+2/81+2/729,0) -- (2/9+2/81+2/729,.75^4*.5*.5*.5);

\draw[very thin] (2/9+2/243+2/729,0) -- (2/9+2/243+2/729,.75^5*.5*.75*.75);

\draw[very thin] (2/27+2/81+2/243,0) -- (2/27+2/81+2/243,.75^4*.75*.75*.75);
\draw[very thin] (2/27+2/81+2/729,0) -- (2/27+2/81+2/729,.75^4*.75*.75*.5);

\draw[very thin] (2/27+2/243+2/729,0) -- (2/27+2/243+2/729,.75^5*.75*.5*.75);

\draw[very thin] (2/81+2/243+2/729,0) -- (2/81+2/243+2/729,.75^5*.5*.75*.75);

\draw[very thin] (2/3+2/9+2/27+2/81,0) -- (2/3+2/9+2/27+2/81,.75^3*.75*.75*.75*.75);
\draw[very thin] (2/3+2/9+2/27+2/243,0) -- (2/3+2/9+2/27+2/243,.75^3*.75*.75*.75*.5);
\draw[very thin] (2/3+2/9+2/27+2/729,0) -- (2/3+2/9+2/27+2/729,.75^3*.75*.75*.75*.75);

\draw[very thin] (2/3+2/9+2/27+2/81+2/243,0) -- (2/3+2/9+2/27+2/81+2/243,.75^4*.75*.75*.75*.75*.75);
\draw[very thin] (2/3+2/9+2/27+2/81+2/729,0) -- (2/3+2/9+2/27+2/81+2/729,.75^4*.75*.75*.75*.75*.5);

\draw[very thin] (2/3+2/9+2/27+2/243+2/729,0) -- (2/3+2/9+2/27+2/243+2/729,.75^5*.75*.75*.75*.5*.75);

\draw[very thin] (2/3+2/9+2/27+2/81+2/243+2/729,0) -- (2/3+2/9+2/27+2/81+2/243+2/729,.75^5*.75*.75*.75*.75*.75*.75);

\draw[very thin] (2/3+2/27+2/81+2/243,0) -- (2/3+2/27+2/81+2/243,.75^4*.75*.5*.75*.75);
\draw[very thin] (2/3+2/27+2/81+2/729,0) -- (2/3+2/27+2/81+2/729,.75^4*.75*.5*.75*.5);
\draw[very thin] (2/3+2/27+2/81+2/243+2/729,0) -- (2/3+2/27+2/81+2/243+2/729,.75^5*.75*.5*.75*.75*.75);
\draw[very thin] (2/3+2/27+2/243+2/729,0) -- (2/3+2/27+2/243+2/729,.75^5*.75*.5*.5*.75);

\draw[very thin] (2/3+2/81+2/243+2/729,0) -- (2/3+2/81+2/243+2/729,.75^5*.75*.75*.75*.75);

\draw[very thin] (2/27+2/81+2/243+2/729,0) -- (2/27+2/81+2/243+2/729,.75^5*.75*.75*.75*.75);

\draw[very thin] (2/9+2/81+2/243+2/729,0) -- (2/9+2/81+2/243+2/729,.75^5*.5*.5*.75*.75);

\draw[very thin] (2/9+2/27+2/243+2/729,0) -- (2/9+2/27+2/243+2/729,.75^5*.5*.75*.5*.75);
\draw[very thin] (2/9+2/27+2/81+2/729,0) -- (2/9+2/27+2/81+2/729,.75^4*.5*.75*.75*.5);
\draw[very thin] (2/9+2/27+2/81+2/243+2/729,0) -- (2/9+2/27+2/81+2/243+2/729,.75^5*.5*.75*.75*.75*.75);
\draw[very thin] (2/9+2/27+2/81+2/243,0) -- (2/9+2/27+2/81+2/243,.75^4*.5*.75*.75*.75);

\draw[very thin] (2/3+2/9+2/243+2/729,0) -- (2/3+2/9+2/243+2/729,.75^5*.75*.75*.75*.75);
\draw[very thin] (2/3+2/9+2/81+2/243+2/729,0) -- (2/3+2/9+2/81+2/243+2/729,.75^5*.75*.75*.5*.75*.75);
\draw[very thin] (2/3+2/9+2/81+2/243,0) -- (2/3+2/9+2/81+2/243,.75^4*.75*.75*.5*.75);
\draw[very thin] (2/3+2/9+2/81+2/729,0) -- (2/3+2/9+2/81+2/729,.75^4*.75*.75*.5*.5);

\begin{scope}[shift={(-1.5,0)}]

\node at (.3,.85) {\footnotesize $V^k_{n_1,\dots,n_k}$};

\draw[thin] (2/9,0) -- (2/9,2/3);

\draw[very thin] (2/9+2/27,0) -- (2/9+2/27,.75^2*.5*.75);
\draw[very thin] (2/9+2/81,0) -- (2/9+2/81,.75^2*.5*.5);
\draw[very thin] (2/9+2/243,0) -- (2/9+2/243,.75^2*.5*.75);
\draw[very thin] (2/9+2/729,0) -- (2/9+2/729,.75^2*.5*.5);

\draw[very thin] (2/9+2/27+2/81,0) -- (2/9+2/27+2/81,.75^3*.5*.75*.75);
\draw[very thin] (2/9+2/27+2/243,0) -- (2/9+2/27+2/243,.75^3*.5*.75*.5);
\draw[very thin] (2/9+2/27+2/729,0) -- (2/9+2/27+2/729,.75^3*.5*.75*.75);

\draw[very thin] (2/9+2/81+2/243,0) -- (2/9+2/81+2/243,.75^4*.5*.5*.75);
\draw[very thin] (2/9+2/81+2/729,0) -- (2/9+2/81+2/729,.75^4*.5*.5*.5);

\draw[very thin] (2/9+2/243+2/729,0) -- (2/9+2/243+2/729,.75^5*.5*.75*.75);

\draw[very thin] (2/9+2/81+2/243+2/729,0) -- (2/9+2/81+2/243+2/729,.75^5*.5*.5*.75*.75);

\draw[very thin] (2/9+2/27+2/243+2/729,0) -- (2/9+2/27+2/243+2/729,.75^5*.5*.75*.5*.75);
\draw[very thin] (2/9+2/27+2/81+2/729,0) -- (2/9+2/27+2/81+2/729,.75^4*.5*.75*.75*.5);
\draw[very thin] (2/9+2/27+2/81+2/243+2/729,0) -- (2/9+2/27+2/81+2/243+2/729,.75^5*.5*.75*.75*.75*.75);
\draw[very thin] (2/9+2/27+2/81+2/243,0) -- (2/9+2/27+2/81+2/243,.75^4*.5*.75*.75*.75);

\end{scope}

\draw[densely dotted] (-1/6,0) rectangle (7/6,.75);
\draw[densely dotted] (-1.5+2/9-1/54,0) rectangle (-1.5+1/3+1/54,.75^3*.5);

\draw[->,very thin] (-1.5+1/3+1/54+.02,.75^3*.5^2) -- (-1/6-.02,.375);

\draw[->,very thin] (-1.5+2/9+.02,1/3+.75^3*.5^2) -- (-.02,.5+.375);

\end{tikzpicture}
\end{center}

Observe that $h_{x^k_{n_1,\dots,n_k}}$ maps the leftmost blade in in $V^k_{n_1,\dots,n_k}$, namely $B_{x^k_{n_1,\dots,n_k}}$, homeomorphically onto $\{0\} \times [0,1]$ via a piecewise-linear homeomorphism. 

If $B_{x^\ell_{m_1,\dots,m_\ell}}$ is any other blade in $V^k_{n_1,\dots,n_k}$, then as observed above, 
$k \leq \ell$ and $(n_1,n_2,\dots,n_k) = (m_1,m_2,\dots,m_k)$. 
But then 
\begin{align*}
h^k_{n_1,\dots,n_k}(x^\ell_{m_1,\dots,m_\ell},e^\ell_{m_1,\dots,m_\ell}) &\,=\, \textstyle \Bigg( 3^{\sum_{i=1}^k n_i} \Big( x^\ell_{m_1,\dots,m_\ell} - x^k_{n_1,\dots,n_k} \Big) \,,\, \frac{M}{m^k_{n_1,\dots,n_k}} e^\ell_{m_1,\dots,m_k} \Bigg) \\
&\,=\, \textstyle \Bigg( 3^{\sum_{i=1}^k n_i} \bigg( 2\cdot\sum_{i=1}^\ell \left(\frac{1}{3}\right)^{\sum_{j=1}^im_j} \,-\, 2\cdot\sum_{i=1}^k \left(\frac{1}{3}\right)^{\sum_{j=1}^in_j} \bigg) \,,\,  \\
& \textstyle \ \ \qquad \qquad \qquad \frac{M}{M^{\sum_{i=1}^{k}n_i}\cdot M \cdot \prod_{i=1}^ky_{n_i}}M^{\sum_{i=1}^{\ell-1}m_i}\cdot \prod_{i=1}^\ell y_{m_i} \Bigg) \\
&\,=\, \textstyle \Bigg( 2\cdot\sum_{i=k+1}^\ell \left(\frac{1}{3}\right)^{\sum_{j=k+1}^im_j} \,,\, 
M^{\sum_{i=k+1}^{\ell-1}m_i}\cdot \prod_{i=k+1}^\ell y_{m_i} \Bigg) \\
&\,=\, \Big( x^{\ell-k}_{m_{k+1},\dots,m_{\ell}},e^{\ell-k}_{m_{k+1},\dots,m_{\ell}} \Big).
\end{align*}
In other words, $h^k_{n_1,\dots,n_k}$ maps the tip of the blade at $x^\ell_{m_1,\dots,m_\ell}$ to the tip of the blade at $x^{\ell-k}_{m_{k+1},\dots,m_{\ell}}$. Because $h^k_{n_1,\dots,n_k}$ is just a scaling in the $Y$-coordinate, this means that $h^k_{n_1,\dots,n_k}$ maps the blade $B_{x^\ell_{m_1,\dots,m_\ell}}$ in $V^k_{n_1,\dots,n_k}$ to the blade $B_{x^{\ell-k}_{m_{k+1},\dots,m_{\ell}}}$ in $C_{>0}$. 
Thus every blade of $V^k_{n_1,\dots,n_k}$ maps to some blade of $C$, no two blades of $V^k_{n_1,\dots,n_k}$ map to the same blade of $C$, and every blade of $C$ has some blade of $V^k_{n_1,\dots,n_k}$ mapping onto it. 
Therefore $h^k_{n_1,\dots,n_k}$ is a bijection. 
And $h^k_{n_1,\dots,n_k}$ is clearly continuous in both directions, so $h^k_{n_1,\dots,n_k}$ is a homeomorphism as claimed. 
Furthermore, $h^k_{n_1,\dots,n_k}$ maps the endpoint of $B_{x^k_{n_1,\dots,n_k}}$ to $(0,1)$.
\hfill $\dashv$
\vspace{3mm}

Using this claim, we now show $F_C$ is endpoint-homogeneous. 
Suppose $\bar x_1 = x^k_{n_1,\dots,n_k}$ and $\bar x_2 = x^\ell_{m_1,\dots,m_\ell}$ are two different points at which $C$ has a blade. By the claim above, there is a homeomorphism 
$$h \,=\, \big( h^\ell_{m_1,\dots,m_\ell} \big)^{-1} \circ h^k_{n_1,\dots,n_k}:\, V^k_{n_1,\dots,n_k} \to V^\ell_{m_1,\dots,m_\ell}$$ 
that maps the tip of $B_{\bar x_1}$ to the tip of $B_{\bar x_2}$. 
Because $h$ is a homeomorphism and $\bar x_1 \neq \bar x_2$, there is a clopen neighborhood $W$ of $B_{\bar x_1}$ such that $W \cap h[W] = \0$. Shrinking $W$ if necessary, we may (and do) assume that $W$ has the form $V \times (0,1]$ for some basic clopen $V \sub K$. 
This implies $h[W]$ is also a clopen subset of $K$. 
Now define a homeomorphism $H: F_C \to F_C$ as follows:
$$H(z) \,=\, \begin{cases}
h(z) &\text{ if } z \in W, \\
h^{-1}(z) &\text{ if } z \in h[W], \\
z & \text{ otherwise.}
\end{cases}$$
Because $W$ and $h[W]$ are both relatively clopen subsets of $F_C \setminus \{t\}$, it is clear that $H \rest (F_C \setminus \{t\})$ is a homeomorphism $F_C \setminus \{t\} \to F_C \setminus \{t\}$. But then this homeomorphism extends (uniquely) to the one-point compactification of $F_C \setminus \{t\}$, namely $F_C$, by mapping $t$ to $t$; this extension is $H$. So $H$ is a homeomorphism $F_C \to F_C$ mapping the tip of the blade at $\bar x_1$ to the tip of the blade at $\bar x_2$. 
Thus, given two arbitrary points $\bar x_1$ and $\bar x_2$ at which $C$ has a blade, we have found a homeomorphism $F_C \to F_C$ mapping the corresponding endpoints to one another. 
Hence $F_C$ is endpoint-homogeneous.

It remains only to check that $F_C$ is $X$-EPG. Because we already know $F_C$ is endpoint-homogeneous (so all blades are topologically indistinguishable in the sense of Lemma~\ref{lem:blades}), it suffices to show for a single $B \in \B(F_C)$ that there is a homeomorphism $\phi: I \to B$ such that $\phi(0) = t$ and $\phi[X] = \closure{E(F_C) \setminus B}$. 
Let $B = \{0\} \times I$, the leftmost blade of $C$, and let $\phi$ be the natural homeomorphism $\phi(z) = (0,z)$. 

Because the tip of the blade at $x^k_{n_1,\dots,n_k}$ has height $\leq\! M^k$, it is clear that $\closure{\mathcal E(C)}$ contains points of $I \times \{0\}$ (in $C$), which means $t \in \closure{E(F_c) \setminus B}$ (in $F_C$). Hence $\phi(0) = t \in \closure{E(F_c) \setminus B}$.

Next let $x \in X \setminus \{0\}$. 
By our choice of the sequence $\seq{y_n}{n \in \N}$ in the construction, there is a subsequence $\seq{y_{n_k}}{k \in \N}$ such that $\lim_{k \to \infty}y_{n_k} = x$. But then
$\seq{(x^1_{n_k},e^1_{n_k})}{k \in \N} = \seq{(x^1_{n_k},y_{n_k})}{k \in \N}$ 
is a sequence of points in $\mathcal E(C) \setminus B_0$ converging to $(0,x) = \phi(x)$. 
Combined with the previous paragraph, this shows $\phi[X] \sub \closure{E(F_C) \setminus B}$.

For the reverse inclusion, fix some $z \in B \cap \closure{E(F_C) \setminus B}$. Either $z = t$, or else $z = (0,y_z)$ for some $y_z \in [0,1]$. Let $\seq{(\bar x_k,\bar e_k)}{k \in \N}$ be a sequence in $E(F_C) \setminus B = \mathcal E(C) \setminus B$ converging to $z$. 
This implies $\lim_{k \to \infty} \bar x_k = 0$, and we would like to show 
$\lim_{k \to \infty} \bar e_k \in X$, since this would mean 
$z = (0,\lim_{k \to \infty} \bar e_k) \in \phi[X]$. 
We consider two cases. 

For the first case, suppose there are infinitely many $k$ such that $\bar x_k = x^1_{n_k}$ for some $n_k$. Then by construction, $(\bar x_k,\bar e_k) = (x^1_{n_k},y_{n_k})$ for infinitely many $k$. 
But then $\lim_{k \to \infty} \bar y_n \in \closure{\set{y_n}{n \in \N}} = X$. 

For the second case, suppose there are only finitely many $k$ such that $\bar x_k = x^1_{n_k}$. 
So, for all sufficiently large $k$, $\bar x_k = x^\ell_{n^k_1,n^k_2,\dots,n^k_\ell}$ for some $\ell > 1$ and some $n^k_1,n^k_2,\dots,n^k_\ell \in \N$. 
But $\bar x_k = x^\ell_{n^k_1,n^k_2,\dots,n^k_\ell} \geq \frac{2}{3^{n^k_1}}$ by construction. Because $\lim_{k \to \infty} \bar x_k = 0$, this implies $\lim_{k \to \infty}n^k_1 = \infty$. 
But also $\bar e_k \leq M^{n^1_k}$ by construction, so this implies $\lim_{k \to \infty} \bar e_k = 0 \in X$. 

In either case $z \in \phi[X]$, so $\phi[X] \supseteq B \cap \closure{E(F_C) \setminus B}$ as claimed. Hence $\phi[X] = B \cap \closure{E(F_C) \setminus B}$, and $F_C$ is an $X$-EPG fan.
\end{proof}

This concludes the proof that for sets $X$ of type $(1)$ or $(2)$ in Corollary~\ref{cor:list0}, there is an endpoint-homogeneous, $X$-EPG smooth fan. 
It remains to show that the same is true for $X$ of type $(3)$. 
As it turns out, this is a relatively easy consequence of the previous theorem.

\begin{theorem}\label{thm:add1}
Suppose $X$ is a closed subset of $[0,1]$ with $0,1 \in X$, and $1$ is an isolated point of $X$. 
Then there is an endpoint-homogeneous, $X$-EPG smooth fan. 
\end{theorem}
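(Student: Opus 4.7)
Let $Y = X \setminus \{1\}$. Because $1$ is isolated in $X$, the set $Y$ is a closed subset of $[0,1]$ with $0 \in Y$ and $1 \notin Y$, so Theorem~\ref{thm:EPG} produces an endpoint-homogeneous $Y$-EPG smooth fan $F_Y$. The plan is to construct the desired $F_X$ from $F_Y$ by a comb construction that \emph{thickens} every blade into a Cantor-set family of parallel blades at the same height. This turns every tip into a limit of other tips (accounting for $1 \in X$) while preserving the inherited $Y$-accumulation below and maintaining a clear gap just below each tip (reflecting the isolation of $1$ in $X$).

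Concretely, I would represent $F_Y$ as the fan of the explicit comb $C_Y$ from the proof of Theorem~\ref{thm:EPG}, in which blades sit at points $x^k_{n_1,\ldots,n_k} \in K$ at heights $e^k_{n_1,\ldots,n_k}$ inside nested basic clopens $U^k_{n_1,\ldots,n_k} \sub K$. I would then redo the inductive construction so that, alongside the single column above each $x^k_{n_1,\ldots,n_k}$, I also add a Cantor-set family of parallel columns at the same height $e^k_{n_1,\ldots,n_k}$, drawn from positions in $U^k_{n_1,\ldots,n_k} \setminus \bigcup_m U^{k+1}_{n_1,\ldots,n_k,m}$. The thickening is iterated at every level, so that each new blade in turn carries a rescaled copy of the whole thickened structure. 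The resulting set $C_X$ is a comb (closedness follows from the nested clopens and $e^k \leq M^{k-1} \to 0$), hence by Theorem~\ref{thm:FC} a smooth fan $F_{C_X}$.

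It then remains to verify that $F_{C_X}$ is endpoint-homogeneous and $X$-EPG. Endpoint-homogeneity follows by the same self-similarity argument as in the proof of Theorem~\ref{thm:EPG}: the portion of $C_X$ inside any thickened-blade clopen, up to its tip height, is a rescaled copy of the whole comb, and the ``swap'' construction at the end of that proof then produces, for any two given endpoints, a homeomorphism of $F_{C_X}$ carrying one to the other. That $F_{C_X}$ is $X$-EPG comes from two separate contributions to the accumulation along any blade $B$: the inherited sub-blade heights give the $\phi[Y]$-accumulation exactly as in $F_Y$, while the Cantor-set thickening adds accumulation at the tip height $\phi(1)$; the gap of size at least $(1-M) e^k > 0$ between the tip and the highest sub-blade height makes $1$ an isolated point of the embedded accumulation set, matching the isolation of $1$ in $X$. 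The main obstacle is insisting on the thickening \emph{at every level}: thickening only the outermost blades would leave ``new'' tips in the Cantor-set families without the $\phi[Y]$-accumulation enjoyed by the ``old'' tips, breaking endpoint-homogeneity --- it is the recursive self-similarity of the thickening that forces every blade of $F_{C_X}$ to have the same local accumulation structure.
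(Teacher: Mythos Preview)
Your approach is correct in outline and aims at the same structural outcome as the paper --- replacing each blade of a $Y$-EPG fan by a Cantor set of parallel blades at the same height --- but the paper gets there by a much shorter and more modular route. Rather than re-entering the recursive comb construction of Theorem~\ref{thm:EPG} and threading a Cantor thickening through every level, the paper simply takes the finished fan $F_Y$ and forms the quotient of $F_Y \times K$ obtained by collapsing $\{t_0\} \times K$ to a point. This is immediately a smooth fan (since $K \times K \homeo K$ gives an embedding into the Cantor fan), its endpoint set is $E(F_Y) \times K$, and both endpoint-homogeneity and the $X$-EPG property follow in a few lines: a product homeomorphism $h_1 \times h_2$ handles endpoint-homogeneity, and the accumulation on a blade $B_0 \times \{c\}$ decomposes cleanly into the $\phi[Y]$-part coming from $F_Y \times \{c\}$ and the $\{\phi(1)\}$-part coming from $\{e_0\} \times K$.

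What your approach buys is a concrete comb picture and a direct continuation of the Theorem~\ref{thm:EPG} machinery, which may feel more hands-on; what it costs is exactly the recursive bookkeeping you flag at the end --- every ``new'' blade in a thickening needs its own full family of sub-blades to recover the $Y$-accumulation, and verifying closedness and self-similarity through all levels is real work. The paper's product construction sidesteps all of that: it never reopens the comb, treats $F_Y$ as a black box, and checks the two required properties in a paragraph each.
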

\begin{proof}
Because $1$ is an isolated point of $X$, $Y = X \setminus \{1\}$ is a closed subset of $[0,1]$ with $1 \notin Y$. By the previous theorem, there is an endpoint-homogeneous, $Y$-EPG smooth fan $F_Y$. Let $t_0$ denote the top of $F_Y$. 

Let $F$ denote the quotient space obtained from $F_Y \times K$ (where $K$ denotes the Cantor set) by collapsing $\{t_0\} \times K$ to a single point. (Equivalently, one may view $F$ as the one-point compactification of $(F_Y \setminus \{t_0\}) \times K$.) 
Let $t$ denote the top of $F$. 
For convenience, we identify $F \setminus \{t\}$ with $(F_Y \setminus \{t_0\}) \times K$.

Because $F_Y$ is a smooth fan, it is a subcontinuum of the Cantor fan $F_K$ (with top $t_K$). This means $F$ is a subset of the quotient space obtained from $F_K \times K$ by collapsing $\{t_K\} \times K$ to a single point (or, equivalently, the one-point compactification of $(F_K \setminus \{t_K\}) \times K$). This latter space is homeomorphic to $F_K$, because $K \times K \homeo K$. Thus $F$ is a subset of a Cantor fan. 

It is straightforward to check that $F$ is path-connected, compact, and contains a copy of $F_Y$. Combined with the previous paragraph, this means $F$ is a smooth fan. Furthermore, it is clear that $E(F) = E(F_Y) \times K$.

Let $e,e' \in E(F)$. Then $e = (e_1,e_2)$ and $e' = (e_1',e_2')$, where $e_1,e_1' \in E(F_Y)$ and $e_2,e_2' \in K$. Because $F_Y$ is endpoint-homogeneous, there is a homeomorphism $h_1: F_Y \to F_Y$ with $h_1(e_1) = e_1'$. Because $K$ is homogeneous, there is a homeomorphism $h_2: K \to K$ with $h_2(e_2) = e_2'$. But then
$$H(z) \,=\, \begin{cases}
(h_1(x),h_2(y)) &\text{ if } z = (x,y) \in (F_Y \setminus \{t_0\}) \times K, \\
t &\text{ if } z=t
\end{cases}$$
is a homeomorphism $F \to F$ with $H(e) = e'$. So $F$ is endpoint-homogeneous. 

Lastly, we must show that $F$ is an $X$-EPG fan. 
Fix $B \in \B(F)$. 
Observe that $B = B_0 \times \{c\}$ for some $B_0 \in \B(F_Y)$ and $c \in K$, provided that we are willing to abuse notation slightly and identify $(t_0,c)$ with $t$. 
Because $F_Y$ is $Y$-EPG, there is a homeomorphism $\phi: I \to B$ with $\phi(0) = t$ such that $\phi[Y] = B \cap \closure{E(F_Y) \setminus B}$. We claim that $\phi[Y] = B \cap \closure{E(F) \setminus B}$.

On the one hand, $F_Y \times \{c\}$ is a homeomorphic copy of $F_Y$ sitting inside of $F$, and the endpoints of this copy of $F_Y$ are all endpoints of $F$. 
Therefore 
$$B \cap \closure{E(F) \setminus B} \,\supseteq\, B \cap \closure{E(F_Y \times \{c\}) \setminus B} \,=\, \phi[Y].$$
But also $\phi(1)$, the endpoint of $B$, is in $\closure{E(F) \setminus B}$, because it is a limit point of $\{\phi(1)\} \times K \sub E(F)$. Hence $B \cap \closure{E(F) \setminus B} \supseteq \phi[Y] \cup \{\phi(1)\} = \phi[X]$. 

On the other hand, suppose $x \in B$ and $x \notin \phi[X]$. Fix $x_0 \in B_0$ such that $x = (x_0,c)$. Then $x_0 \notin \phi[Y]$, and $x_0$ is not the endpoint of $B_0$ (because $x \notin \phi[X]$), so there is an open neighborhood $U \sub F_Y$ with $x_0 \in U$ such that $U \cap E(F_Y) = \0$. But then $U \times K$ is an open neighborhood of $(x_0,c)$ in $F$ such that $(U \times K) \cap E(F) = (U \times K) \cap (E(F_Y) \times K) = \0$. Hence $x \notin \closure{E(F) \setminus B}$.
\end{proof}

\begin{corollary}\label{cor:list1}
Let $X \sub [0,1]$. The following are equivalent:
\begin{enumerate}
\item There is an endpoint-homogeneous, $X$-EPG smooth fan.
\item There is an $X$-EPG smooth fan.
\item $X$ satisfies one of the conditions listed in Corollary~\ref{cor:list0}.
\end{enumerate}
\end{corollary}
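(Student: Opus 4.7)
The plan is to chain the implications $(1)\Rightarrow(2)\Rightarrow(3)\Rightarrow(1)$. The implication $(1)\Rightarrow(2)$ is immediate from the definitions: an endpoint-homogeneous $X$-EPG smooth fan is, in particular, an $X$-EPG smooth fan. The implication $(2)\Rightarrow(3)$ is precisely the content of Corollary~\ref{cor:list0}, which was established via the Baire-category arguments of the previous section (Theorems~\ref{thm:Countable}, \ref{thm:Uncountable}, \ref{thm:BCT1}, and the final theorem of Section~4).

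For the remaining direction $(3)\Rightarrow(1)$, I would split into cases according to the three bullets in Corollary~\ref{cor:list0}. The first bullet is covered by the classical examples displayed at the start of the present section: the simple $n$-ods ($n\geq 3$) serve for $X=\emptyset$, the Cantor fan serves for $X=\{1\}$, and the Lelek fan serves for $X=[0,1]$. Each of these is well known to be endpoint-homogeneous — in fact, each is $\tfrac{1}{3}$-homogeneous by the result of Acosta, Hoehn, and Juárez mentioned in the introduction. The second bullet, where $X$ is a closed subset of $[0,1]$ with $0\in X$ and $1\notin X$, is handled directly by Theorem~\ref{thm:EPG}. The third bullet, where $X$ is a closed subset of $[0,1]$ with $0,1\in X$ and $1$ an isolated point of $X$, is handled by Theorem~\ref{thm:add1}. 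This exhausts the possibilities listed in Corollary~\ref{cor:list0}, so in every case we produce an endpoint-homogeneous $X$-EPG smooth fan.

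Since essentially all of the substantive work has been carried out in the preceding two sections, this corollary is a bookkeeping result and has no real main obstacle. The only thing to be careful about is the third case in the first bullet: one must recall \emph{why} the standard examples are endpoint-homogeneous — trivial rotational symmetry for the $n$-ods, extension of any self-homeomorphism of the Cantor set to the cone for the Cantor fan, and Charatonik's uniqueness characterization \cite{lelekUniqueChar} (or the observation that any two endpoints of the Lelek fan lie in homeomorphic sub-Lelek-fans) for the Lelek fan. Once these are acknowledged, the proof reduces to a short case-split invoking the already-proved results.
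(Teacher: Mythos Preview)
Your proposal is correct and matches the paper's approach: the paper states this corollary without proof, treating it as an immediate consequence of combining Corollary~\ref{cor:list0} with the constructions in Section~5 (the classical examples for $X=\emptyset,\{1\},[0,1]$, Theorem~\ref{thm:EPG}, and Theorem~\ref{thm:add1}), exactly as you have organized the implications $(1)\Rightarrow(2)\Rightarrow(3)\Rightarrow(1)$.
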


\begin{theorem}
There are uncountably many distinct homeomorphism types of endpoint-homogeneous smooth fans.
\end{theorem}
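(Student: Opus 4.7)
My plan is to combine Corollary~\ref{cor:list1} with the fact that $EPG$ is a homeomorphism invariant of endpoint-homogeneous smooth fans. The first step will be to verify this invariance: if $F$ and $G$ are endpoint-homogeneous smooth fans with $F$ an $X$-EPG fan, $G$ a $Y$-EPG fan, and $h : F \to G$ a homeomorphism, then $X$ and $Y$ are equivalently embedded. I would show this by fixing any blade $B \in \B(F)$ together with parametrizations $\phi : I \to B$ and $\psi : I \to h[B]$ witnessing the $X$-EPG and $Y$-EPG properties at $B$ and $h[B]$ respectively; by Observation~\ref{homeo_end-points}, $h$ sends the top of $F$ to the top of $G$ and maps $B$ onto a blade of $G$, so $\psi^{-1} \circ h \circ \phi$ is a self-homeomorphism of $I$ fixing $0$ and $1$ (that is, order-preserving) and carrying $X$ onto $Y$.

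Next I would exhibit uncountably many closed subsets of $[0,1]$, each satisfying one of the cases in Corollary~\ref{cor:list0}, that are pairwise non-equivalently embedded. The cleanest route is via Cantor--Bendixson rank: for each countable ordinal $\alpha \geq 1$, I would choose a countable compact subset $K_\alpha$ of $(0, \tfrac{1}{2}]$ of Cantor--Bendixson rank exactly $\alpha$ (any countable compact metric space of the desired rank embeds closedly into $(0,\tfrac{1}{2}]$), and set $X_\alpha = \{0\} \cup K_\alpha$. Each $X_\alpha$ is a closed subset of $[0,1]$ with $0 \in X_\alpha$ and $1 \notin X_\alpha$, so it falls under case (2) of Corollary~\ref{cor:list0}. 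Corollary~\ref{cor:list1} then yields an endpoint-homogeneous $X_\alpha$-EPG smooth fan $F_\alpha$.

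To finish, I would observe that distinct $X_\alpha$ and $X_\beta$ have distinct CB ranks, hence are not homeomorphic and in particular not equivalently embedded. By the invariance established in the first step, the fans $\{F_\alpha : 1 \leq \alpha < \omega_1\}$ are pairwise non-homeomorphic, yielding $\aleph_1$ --- hence uncountably many --- distinct homeomorphism types. The only real obstacle is the first step, checking that $EPG$ is genuinely a homeomorphism invariant; this is essentially immediate from Observation~\ref{homeo_end-points} once the definitions are unwound, since any homeomorphism between fans must map the top to the top, endpoints to endpoints, and therefore blades to blades.
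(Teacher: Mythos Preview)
Your proposal is correct and takes a somewhat different route from the paper. The invariance step you spell out is essentially what the paper packages into Observation~\ref{obs:EE} (if $F \homeo G$ and $G$ is $Y$-EPG then $F$ is also $Y$-EPG, whence $X$ and $Y$ are equivalently embedded), so that part aligns. The real divergence is in how the uncountable family is produced. You invoke Cantor--Bendixson rank to obtain $\aleph_1$ pairwise non-homeomorphic countable compact sets $K_\alpha \sub (0,\nicefrac{1}{2}]$ and set $X_\alpha = \{0\} \cup K_\alpha$; this works, since adding the isolated point $0$ does not change the rank and CB rank is a homeomorphism invariant. The paper instead gives a direct combinatorial family: for each $A \sub \N$ it inserts or omits a small interval $[a_n,b_n]$ between $\frac{1}{n+1}$ and $\frac{1}{n}$, setting $X_A = \{0\} \cup \set{\nicefrac{1}{n}}{n \in \N} \cup \bigcup_{n \in A}[a_n,b_n]$. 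Any order-preserving self-homeomorphism of $I$ must match up the nondegenerate connected components in order, so $X_A$ and $X_B$ are equivalently embedded only if $A = B$. This is more elementary (no ordinals, no classification of countable compact metric spaces) and yields the sharper bound of $2^{\aleph_0}$ homeomorphism types rather than $\aleph_1$. One small caveat on your version: if by CB rank you mean the least $\alpha$ with $X^{(\alpha)} = \emptyset$, then for nonempty countable compact spaces only successor ordinals occur, so strictly speaking you should index over those; this of course still gives $\aleph_1$ many and does not affect the conclusion.
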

\begin{proof}
In light of the previous Corollary~\ref{cor:list1} and Observation~\ref{obs:EE}, it suffices to show that there is an uncountable family of subsets of $[0,1]$ such that every member of the family satisfies the three statements in Corollary~\ref{cor:list1}, but no two of them are equivalently embedded in $[0,1]$. 

For each $n \in \N$, fix some $a_n,b_n$ such that $\frac{1}{n+1} < a_n < b_n < \frac{1}{n}$. 
Then, for each $A \sub \N$, let 
$$X_A \,=\, \textstyle \{0\} \cup \set{\frac{1}{n}}{n \in \N} \cup \set{[a_n,b_n] \vphantom{f^f}}{n \in A}.$$
The family of sets $\set{X_A}{A \sub \N}$ has the required properties.
\end{proof}

\section{$\frac{1}{n}$-homogeneity}

Recall from the introduction that a space $X$ is \emph{$\frac{1}{n}$-homogeneous} if there is a partition of $X$ into $n$ sets (but no fewer) such that if $x,y \in X$ lie in the same partition piece, then there is a homeomorphism $h: X \to X$ with $h(x) = y$. 

In \cite{AHJ}, Acosta, Hoehn, and Ju\'arez completely classify all $\frac{1}{3}$-homogeneous smooth fans: they are the simple $n$-ods, the Cantor fan, the Lelek fan, and the $\{0\}$-EPG and $\{0,1\}$-EPG fans constructed in the previous section.

\begin{center}
\begin{tikzpicture}[xscale=.375,yscale=.335]

\draw[very thin] (4.5,7) -- (0,0);
\draw[ultra thin] (4.5,7) -- (1/27,0);
\draw[ultra thin] (4.5,7) -- (2/27,0);
\draw[very thin] (4.5,7) -- (1/9,0);
\draw[very thin] (4.5,7) -- (2/9,0);
\draw[ultra thin] (4.5,7) -- (7/27,0);
\draw[ultra thin] (4.5,7) -- (8/27,0);
\draw[very thin] (4.5,7) -- (1/3,0);
\draw[very thin] (4.5,7) -- (2/3,0);
\draw[ultra thin] (4.5,7) -- (19/27,0);
\draw[ultra thin] (4.5,7) -- (20/27,0);
\draw[very thin] (4.5,7) -- (7/9,0);
\draw[very thin] (4.5,7) -- (8/9,0);
\draw[ultra thin] (4.5,7) -- (25/27,0);
\draw[ultra thin] (4.5,7) -- (26/27,0);
\draw[very thin] (4.5,7) -- (1,0);
\draw[very thin] (4.5,7) -- (2,0);
\draw[ultra thin] (4.5,7) -- (55/27,0);
\draw[ultra thin] (4.5,7) -- (56/27,0);
\draw[very thin] (4.5,7) -- (19/9,0);
\draw[very thin] (4.5,7) -- (20/9,0);
\draw[ultra thin] (4.5,7) -- (61/27,0);
\draw[ultra thin] (4.5,7) -- (62/27,0);
\draw[very thin] (4.5,7) -- (7/3,0);
\draw[very thin] (4.5,7) -- (8/3,0);
\draw[ultra thin] (4.5,7) -- (73/27,0);
\draw[ultra thin] (4.5,7) -- (74/27,0);
\draw[very thin] (4.5,7) -- (25/9,0);
\draw[very thin] (4.5,7) -- (26/9,0);
\draw[ultra thin] (4.5,7) -- (79/27,0);
\draw[ultra thin] (4.5,7) -- (80/27,0);
\draw[very thin] (4.5,7) -- (3,0);
\draw[very thin] (4.5,7) -- (6,0);
\draw[ultra thin] (4.5,7) -- (163/27,0);
\draw[ultra thin] (4.5,7) -- (164/27,0);
\draw[very thin] (4.5,7) -- (55/9,0);
\draw[very thin] (4.5,7) -- (56/9,0);
\draw[ultra thin] (4.5,7) -- (169/27,0);
\draw[ultra thin] (4.5,7) -- (170/27,0);
\draw[very thin] (4.5,7) -- (19/3,0);
\draw[very thin] (4.5,7) -- (20/3,0);
\draw[ultra thin] (4.5,7) -- (181/27,0);
\draw[ultra thin] (4.5,7) -- (182/27,0);
\draw[very thin] (4.5,7) -- (61/9,0);
\draw[very thin] (4.5,7) -- (62/9,0);
\draw[ultra thin] (4.5,7) -- (187/27,0);
\draw[ultra thin] (4.5,7) -- (188/27,0);
\draw[very thin] (4.5,7) -- (7,0);
\draw[very thin] (4.5,7) -- (8,0);
\draw[ultra thin] (4.5,7) -- (217/27,0);
\draw[ultra thin] (4.5,7) -- (218/27,0);
\draw[very thin] (4.5,7) -- (73/9,0);
\draw[very thin] (4.5,7) -- (74/9,0);
\draw[ultra thin] (4.5,7) -- (223/27,0);
\draw[ultra thin] (4.5,7) -- (224/27,0);
\draw[very thin] (4.5,7) -- (25/3,0);
\draw[very thin] (4.5,7) -- (26/3,0);
\draw[ultra thin] (4.5,7) -- (235/27,0);
\draw[ultra thin] (4.5,7) -- (236/27,0);
\draw[very thin] (4.5,7) -- (79/9,0);
\draw[very thin] (4.5,7) -- (80/9,0);
\draw[ultra thin] (4.5,7) -- (241/27,0);
\draw[ultra thin] (4.5,7) -- (242/27,0);
\draw[very thin] (4.5,7) -- (9,0);

\begin{scope}[shift={(21,7)}]
\draw[very thin] (0,0) -- (-4.5*.31,-7*.31);
\draw[very thin] (0,0) -- (-4.5*.41+1/27*.41,-7*.41);
\draw[very thin] (0,0) -- (-4.5*.59+2/27*.59,-7*.59);
\draw[very thin] (0,0) -- (-4.5*.26+1/9*.26,-7*.26);
\draw[very thin] (0,0) -- (-4.5*.53+2/9*.53,-7*.53);
\draw[very thin] (0,0) -- (-4.5*.58+7/27*.58,-7*.58);
\draw[very thin] (0,0) -- (-4.5*.97+8/27*.97,-7*.97);
\draw[very thin] (0,0) -- (-4.5*.93+1/3*.93,-7*.93);
\draw[very thin] (0,0) -- (-4.5*.23+2/3*.23,-7*.23);
\draw[very thin] (0,0) -- (-4.5*.84+19/27*.84,-7*.84);
\draw[very thin] (0,0) -- (-4.5*.62+20/27*.62,-7*.62);
\draw[very thin] (0,0) -- (-4.5*.64+7/9*.64,-7*.64);
\draw[very thin] (0,0) -- (-4.5*.33+8/9*.33,-7*.33);
\draw[very thin] (0,0) -- (-4.5*.83+25/27*.83,-7*.83);
\draw[very thin] (0,0) -- (-4.5*.27+26/27*.27,-7*.27);
\draw[very thin] (0,0) -- (-4.5*.95+1*.95,-7*.95);
\draw[very thin] (0,0) -- (-4.5*.02+2*.02,-7*.02);
\draw[very thin] (0,0) -- (-4.5*.88+55/27*.88,-7*.88);
\draw[very thin] (0,0) -- (-4.5*.41+56/27*.41,-7*.41);
\draw[very thin] (0,0) -- (-4.5*.97+19/9*.97,-7*.97);
\draw[very thin] (0,0) -- (-4.5*.16+20/9*.16,-7*.16);
\draw[very thin] (0,0) -- (-4.5*.93+61/27*.93,-7*.93);
\draw[very thin] (0,0) -- (-4.5*.99+62/27*.99,-7*.99);
\draw[very thin] (0,0) -- (-4.5*.37+7/3*.37,-7*.37);
\draw[very thin] (0,0) -- (-4.5*.51+8/3*.51,-7*.51);
\draw[very thin] (0,0) -- (-4.5*.05+73/27*.05,-7*.05);
\draw[very thin] (0,0) -- (-4.5*.82+74/27*.82,-7*.82);
\draw[very thin] (0,0) -- (-4.5*.09+25/9*.09,-7*.09);
\draw[very thin] (0,0) -- (-4.5*.74+26/9*.74,-7*.74);
\draw[very thin] (0,0) -- (-4.5*.94+79/27*.94,-7*.94);
\draw[very thin] (0,0) -- (-4.5*.45+80/27*.45,-7*.45);
\draw[very thin] (0,0) -- (-4.5*.92+3*.92,-7*.92);
\draw[very thin] (0,0) -- (-4.5*.3+6*.3,-7*.3);
\draw[very thin] (0,0) -- (-4.5*.78+163/27*.78,-7*.78);
\draw[very thin] (0,0) -- (-4.5*.16+164/27*.16,-7*.16);
\draw[very thin] (0,0) -- (-4.5*.4+55/9*.4,-7*.4);
\draw[very thin] (0,0) -- (-4.5*.62+56/9*.62,-7*.62);
\draw[very thin] (0,0) -- (-4.5*.86+169/27*.86,-7*.86);
\draw[very thin] (0,0) -- (-4.5*.2+170/27*.2,-7*.2);
\draw[very thin] (0,0) -- (-4.5*.89+19/3*.89,-7*.89);
\draw[very thin] (0,0) -- (-4.5*.98+20/3*.98,-7*.98);
\draw[very thin] (0,0) -- (-4.5*.62+181/27*.62,-7*.62);
\draw[very thin] (0,0) -- (-4.5*.8+182/27*.8,-7*.8);
\draw[very thin] (0,0) -- (-4.5*.34+61/9*.34,-7*.34);
\draw[very thin] (0,0) -- (-4.5*.82+62/9*.82,-7*.82);
\draw[very thin] (0,0) -- (-4.5*.53+187/27*.53,-7*.53);
\draw[very thin] (0,0) -- (-4.5*.42+188/27*.42,-7*.42);
\draw[very thin] (0,0) -- (-4.5*.11+7*.11,-7*.11);
\draw[very thin] (0,0) -- (-4.5*.7+8*.7,-7*.7);
\draw[very thin] (0,0) -- (-4.5*.67+217/27*.67,-7*.67);
\draw[very thin] (0,0) -- (-4.5*.98+218/27*.98,-7*.98);
\draw[very thin] (0,0) -- (-4.5*.21+73/9*.21,-7*.21);
\draw[very thin] (0,0) -- (-4.5*.48+74/9*.48,-7*.48);
\draw[very thin] (0,0) -- (-4.5*.08+223/27*.08,-7*.08);
\draw[very thin] (0,0) -- (-4.5*.65+224/27*.65,-7*.65);
\draw[very thin] (0,0) -- (-4.5*.13+25/3*.13,-7*.13);
\draw[very thin] (0,0) -- (-4.5*.28+26/3*.28,-7*.28);
\draw[very thin] (0,0) -- (-4.5*.23+235/27*.23,-7*.23);
\draw[very thin] (0,0) -- (-4.5*.06+236/27*.06,-7*.06);
\draw[very thin] (0,0) -- (-4.5*.64+79/9*.64,-7*.64);
\draw[very thin] (0,0) -- (-4.5*.7+80/9*.7,-7*.7);
\draw[very thin] (0,0) -- (-4.5*.93+241/27*.93,-7*.93);
\draw[very thin] (0,0) -- (-4.5*.84+242/27*.84,-7*.84);
\draw[very thin] (0,0) -- (-4.5*.46+9*.46,-7*.46);

\end{scope}

\node at (4.5,8.2) {\small Cantor fan};

\node at (21,8.2) {\small Lelek fan};

\node at (12.75,15) {\small simple $n$-ods};

\begin{scope}[shift={(2,12)}]

\begin{scope}[shift={(0,-.5)}]
\draw[thick] (0,0) -- (0,2);
\begin{scope}[rotate=120]
\draw[thick] (0,0) -- (0,2);
\end{scope}
\begin{scope}[rotate=240]
\draw[thick] (0,0) -- (0,2);
\end{scope}
\end{scope}

\begin{scope}[shift={(6.25,0)}]
\draw[thick] (0,0) -- (2,0);
\begin{scope}[rotate=90]
\draw[thick] (0,0) -- (2,0);
\end{scope}
\begin{scope}[rotate=180]
\draw[thick] (0,0) -- (2,0);
\end{scope}
\begin{scope}[rotate=270]
\draw[thick] (0,0) -- (2,0);
\end{scope}
\end{scope}

\begin{scope}[shift={(12.5,-.2)}]
\draw[thick] (0,0) -- (0,2);
\begin{scope}[rotate=72]
\draw[thick] (0,0) -- (0,2);
\end{scope}
\begin{scope}[rotate=144]
\draw[thick] (0,0) -- (0,2);
\end{scope}
\begin{scope}[rotate=216]
\draw[thick] (0,0) -- (0,2);
\end{scope}
\begin{scope}[rotate=288]
\draw[thick] (0,0) -- (0,2);
\end{scope}
\end{scope}

\begin{scope}[shift={(18.75,0)}]
\draw[thick] (0,0) -- (0,2);
\begin{scope}[rotate=60]
\draw[thick] (0,0) -- (0,2);
\end{scope}
\begin{scope}[rotate=120]
\draw[thick] (0,0) -- (0,2);
\end{scope}
\begin{scope}[rotate=180]
\draw[thick] (0,0) -- (0,2);
\end{scope}
\begin{scope}[rotate=240]
\draw[thick] (0,0) -- (0,2);
\end{scope}
\begin{scope}[rotate=300]
\draw[thick] (0,0) -- (0,2);
\end{scope}
\end{scope}

\node at (23,0) {\large $\dots$};

\end{scope}

\begin{scope}[shift={(7,-10)}]

\begin{scope}[rotate=90]
\draw[semithick] (0,0) -- (0,7.5); \end{scope}
\begin{scope}[rotate=37.28]
\draw[semithick] (0,0) -- (0,7.5*.75); \end{scope}
\begin{scope}[rotate=0]
\draw[semithick] (0,0) -- (0,7.5*.75^2); \end{scope}
\begin{scope}[rotate=-26.36]
\draw[semithick] (0,0) -- (0,7.5*.75^3); \end{scope}
\begin{scope}[rotate=-45]
\draw[semithick] (0,0) -- (0,7.5*.75^4); \end{scope}
\begin{scope}[rotate=-58.18]
\draw[semithick] (0,0) -- (0,7.5*.75^5); \end{scope}

\node at (.8,.1) {\tiny $.$};
\node at (.9,.25) {\tiny $.$};
\node at (1,.4) {\tiny $.$};

\node at (-2.5,7.5) {\small the $\{0\}$-EPG smooth fan};

\end{scope}

\begin{scope}[shift={(23.5,-9)}]

\begin{scope}[rotate=90]
\begin{scope}[xscale=.35,yscale=1]
\draw[very thin] (0,0) -- (-4.5+0,7);
\draw[very thin] (0,0) -- (-4.5+1/9,7);
\draw[very thin] (0,0) -- (-4.5+2/9,7);
\draw[very thin] (0,0) -- (-4.5+1/3,7);
\draw[very thin] (0,0) -- (-4.5+2/3,7);
\draw[very thin] (0,0) -- (-4.5+7/9,7);
\draw[very thin] (0,0) -- (-4.5+8/9,7);
\draw[very thin] (0,0) -- (-4.5+1,7);
\draw[very thin] (0,0) -- (-4.5+2,7);
\draw[very thin] (0,0) -- (-4.5+19/9,7);
\draw[very thin] (0,0) -- (-4.5+20/9,7);
\draw[very thin] (0,0) -- (-4.5+7/3,7);
\draw[very thin] (0,0) -- (-4.5+8/3,7);
\draw[very thin] (0,0) -- (-4.5+25/9,7);
\draw[very thin] (0,0) -- (-4.5+26/9,7);
\draw[very thin] (0,0) -- (-4.5+3,7);
\draw[very thin] (0,0) -- (-4.5+6,7);
\draw[very thin] (0,0) -- (-4.5+55/9,7);
\draw[very thin] (0,0) -- (-4.5+56/9,7);
\draw[very thin] (0,0) -- (-4.5+19/3,7);
\draw[very thin] (0,0) -- (-4.5+20/3,7);
\draw[very thin] (0,0) -- (-4.5+61/9,7);
\draw[very thin] (0,0) -- (-4.5+62/9,7);
\draw[very thin] (0,0) -- (-4.5+7,7);
\draw[very thin] (0,0) -- (-4.5+8,7);
\draw[very thin] (0,0) -- (-4.5+73/9,7);
\draw[very thin] (0,0) -- (-4.5+74/9,7);
\draw[very thin] (0,0) -- (-4.5+25/3,7);
\draw[very thin] (0,0) -- (-4.5+26/3,7);
\draw[very thin] (0,0) -- (-4.5+79/9,7);
\draw[very thin] (0,0) -- (-4.5+80/9,7);
\draw[very thin] (0,0) -- (-4.5+9,7);
\end{scope}
\end{scope}

\begin{scope}[rotate=37.28]
\begin{scope}[xscale=.35*.5,yscale=.75]
\draw[very thin] (0,0) -- (-4.5+0,7);
\draw[very thin] (0,0) -- (-4.5+1/3,7);
\draw[very thin] (0,0) -- (-4.5+2/3,7);
\draw[very thin] (0,0) -- (-4.5+1,7);
\draw[very thin] (0,0) -- (-4.5+2,7);
\draw[very thin] (0,0) -- (-4.5+7/3,7);
\draw[very thin] (0,0) -- (-4.5+8/3,7);
\draw[very thin] (0,0) -- (-4.5+3,7);
\draw[very thin] (0,0) -- (-4.5+6,7);
\draw[very thin] (0,0) -- (-4.5+19/3,7);
\draw[very thin] (0,0) -- (-4.5+20/3,7);
\draw[very thin] (0,0) -- (-4.5+7,7);
\draw[very thin] (0,0) -- (-4.5+8,7);
\draw[very thin] (0,0) -- (-4.5+25/3,7);
\draw[very thin] (0,0) -- (-4.5+26/3,7);
\draw[very thin] (0,0) -- (-4.5+9,7);
\end{scope}
\end{scope}

\begin{scope}[rotate=0]
\begin{scope}[xscale=.35*.5^2,yscale=.75^2]
\draw[ultra thin] (0,0) -- (-4.5+0,7);
\draw[ultra thin] (0,0) -- (-4.5+1/3,7);
\draw[ultra thin] (0,0) -- (-4.5+2/3,7);
\draw[ultra thin] (0,0) -- (-4.5+1,7);
\draw[ultra thin] (0,0) -- (-4.5+2,7);
\draw[ultra thin] (0,0) -- (-4.5+7/3,7);
\draw[ultra thin] (0,0) -- (-4.5+8/3,7);
\draw[ultra thin] (0,0) -- (-4.5+3,7);
\draw[ultra thin] (0,0) -- (-4.5+6,7);
\draw[ultra thin] (0,0) -- (-4.5+19/3,7);
\draw[ultra thin] (0,0) -- (-4.5+20/3,7);
\draw[ultra thin] (0,0) -- (-4.5+7,7);
\draw[ultra thin] (0,0) -- (-4.5+8,7);
\draw[ultra thin] (0,0) -- (-4.5+25/3,7);
\draw[ultra thin] (0,0) -- (-4.5+26/3,7);
\draw[ultra thin] (0,0) -- (-4.5+9,7);
\end{scope}
\end{scope}

\begin{scope}[rotate=-26.36]
\begin{scope}[xscale=.35*.5^3,yscale=.75^3]
\draw[ultra thin] (0,0) -- (-4.5+0,7);
\draw[ultra thin] (0,0) -- (-4.5+1,7);
\draw[ultra thin] (0,0) -- (-4.5+2,7);
\draw[ultra thin] (0,0) -- (-4.5+3,7);
\draw[ultra thin] (0,0) -- (-4.5+6,7);
\draw[ultra thin] (0,0) -- (-4.5+7,7);
\draw[ultra thin] (0,0) -- (-4.5+8,7);
\draw[ultra thin] (0,0) -- (-4.5+9,7);
\end{scope}
\end{scope}

\begin{scope}[rotate=-45]
\begin{scope}[xscale=.35*.5^4,yscale=.75^4]
\draw[ultra thin] (0,0) -- (-4.5+0,7);
\draw[ultra thin] (0,0) -- (-4.5+2.5,7);
\draw[ultra thin] (0,0) -- (-4.5+6.5,7);
\draw[ultra thin] (0,0) -- (-4.5+9,7);
\end{scope}
\end{scope}

\begin{scope}[rotate=-58.18]
\begin{scope}[xscale=.35*.5^5,yscale=.75^5]
\draw[ultra thin] (0,0) -- (-4.5+0,7);
\draw[ultra thin] (0,0) -- (-4.5+2.5,7);
\draw[ultra thin] (0,0) -- (-4.5+6.5,7);
\draw[ultra thin] (0,0) -- (-4.5+9,7);
\end{scope}
\end{scope}

\node at (.8,.1) {\tiny $.$};
\node at (.9,.25) {\tiny $.$};
\node at (1,.4) {\tiny $.$};

\node at (-2.5,6.5) {\small the $\{0,1\}$-EPG smooth fan};

\end{scope}

\end{tikzpicture}
\end{center}
\vspace{2mm}

Acosta, Hoehn, and Ju\'arez also prove in \cite{AHJ} (see Theorem 4.1) that no smooth fan is $\frac{1}{4}$-homogeneous. 
The main result of this section is that for every $n \geq 5$, there is an endpoint-homogeneous, $\frac{1}{n}$-homogeneous smooth fan. 
No new fans are constructed though: rather, we show that for certain choices of $X$ (depending on $n$), the $X$-EPG fans constructed in the proof of Theorem~\ref{thm:EPG} are $\frac{1}{n}$-homogeneous. 
The sets $X$ used for odd $n$ and for even $n$ are rather different. The odd case is simpler, and we handle it first.

\begin{theorem}
There is an endpoint-homogeneous, $\frac{1}{n}$-homogeneous smooth fan for all odd $n \geq 5$.
\end{theorem}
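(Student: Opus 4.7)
The plan is to take $k = (n-1)/2 \geq 2$ and consider the closed set $X = \{0\} \cup \{i/k : 1 \leq i \leq k-1\} \subseteq [0,1]$, which has $0 \in X$ and $1 \notin X$. By Theorem~\ref{thm:EPG} this yields an endpoint-homogeneous $X$-EPG smooth fan $F$ with top $t$ and parameterizations $\phi_B : I \to B$ for each blade $B \in \B(F)$. We intend to show $F$ is $\frac{1}{n}$-homogeneous by exhibiting exactly $n = 2k+1$ orbits of its homeomorphism group: the singleton $\{t\}$, the set $E(F)$, the sets $A_i := \{\phi_B(i/k) : B \in \B(F)\}$ for $i = 1,\ldots,k-1$, and the sets $G_i := \{\phi_B(s) : B \in \B(F),\, s \in (i/k,(i+1)/k)\}$ for $i = 0,\ldots,k-1$.

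To see these $n$ classes are in distinct orbits, we will use topological invariants that every self-homeomorphism of $F$ preserves: being the unique ramification point, being an endpoint, being an accumulation point of $E(F)$, and the function $\iota(p)$ counting the accumulation points of $E(F)$ on the blade through $p$ that lie strictly between $p$ and the endpoint of that blade. A direct computation gives $\iota(p) = k-1-i$ on both $A_i$ and $G_i$, so together with the accumulation-point invariant (which holds on each $A_i$ but no $G_i$), these separate all $n$ classes.

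The cases $\{t\}$ and $E(F)$ are single orbits immediately. For the other classes, Lemma~\ref{lem:blades} reduces us to points on a common blade: given $p = \phi_B(s)$ and $p' = \phi_{B'}(s')$ in the same class, a homeomorphism $h : F \to F$ with $h[B] = B'$ induces, by conjugation with the parameterizations, an order-preserving homeomorphism $\sigma : I \to I$ fixing $\{0,1\}$ with $\sigma[X] = X$; since $X$ is finite, $\sigma$ is the identity on $X$, so $h(\phi_B(s)) = \phi_{B'}(\sigma(s))$ with $\sigma(s)$ in the same gap of $X$ as $s$. For $A_i$ this already gives $h(p) = p'$. For $G_i$, it remains to construct, for any two points $q, q'$ on a common blade $B'$ whose $\phi_{B'}$-preimages both lie in $(i/k,(i+1)/k)$, a self-homeomorphism of $F$ fixing $B'$ setwise and mapping $q$ to $q'$.

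The main obstacle is this last construction. Writing $F = F_C$ for a comb $C$ via Theorem~\ref{thm:FC}, our plan is to choose a closed strip $K = V' \times [\alpha,\beta]$ in $C$, where $V'$ is a clopen neighborhood of the $x$-coordinate of $B'$ in the Cantor set and $[\alpha,\beta]$ is a closed subinterval of the $y$-range of $B'$ corresponding to the gap $(i/k,(i+1)/k)$, and to define the self-map $(x,y) \mapsto (x,\tau(y))$ on $K$, extended by the identity elsewhere, where $\tau$ is a homeomorphism of $[\alpha,\beta]$ fixing endpoints and carrying the height of $q$ to that of $q'$. The key verification is that $V'$ can be chosen small enough that every blade of $C$ with $x$-coordinate in $V'$ has tip height either at least $(i+1)/k$ (and so passes fully through the strip) or below $\alpha$ (and so misses the strip), ensuring that $K$ has product structure and the map is continuous. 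This uses the explicit form of the tip heights in Theorem~\ref{thm:EPG}: any tip height strictly between $i/k$ and $(i+1)/k$ forces the blade's generation index to be bounded, which in turn bounds the blade's $x$-coordinate away from that of $B'$. When $i = k-1$, no blade other than $B'$ has tip height $\geq (i+1)/k = 1$ near $B'$, so $K$ reduces to an arc on $B'$ and $\tau$ becomes a trivial reparameterization.
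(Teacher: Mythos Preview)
Your approach matches the paper's: take $X = \{0, a_1, \ldots, a_m\}$ with $m = (n-3)/2$ (your $k-1$), partition each blade into the $n$ natural pieces determined by $X \cup \{1\}$, separate them via topological invariants, dispose of $\{t\}$ and $E(F)$ immediately, handle each $A_i$ by endpoint-homogeneity plus uniqueness of the $A_i$-point on each blade, and handle each gap class $G_i$ by a vertical stretch supported in a product rectangle disjoint from tips.

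The one soft spot is your justification for the rectangle. The inference ``bounded generation index $\Rightarrow$ $x$-coordinate bounded away from $B'$\,'' is not valid as stated: already for $B' = B_0$ the generation-$1$ blades sit at $x = 2/3^{n} \to 0$. (What is actually true there is that those particular tips land exactly at heights in $X$, hence not in an open gap, while higher-generation blades approaching $B_0$ have tip heights tending to $0$; but that is a different argument from the one you sketch.) You also slide between $\phi_{B'}$-coordinates and literal $y$-heights when you write ``tip height at least $(i+1)/k$'' for a general $B'$.

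Both issues vanish with the paper's cleaner route: reduce once more by endpoint-homogeneity to $B' = B_0$, and then simply note that the closed arc $[q,q'] \subset B_0$ lies in the open complement of the closed set $\overline{\mathcal E(C)}$, so by compactness some rectangle $V \times (y_1-\varepsilon,\, y_2+\varepsilon)$ around it misses $\mathcal E(C)$. No tip-height or generation-index analysis is needed, and the case $i = k-1$ requires no special treatment.
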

\begin{proof}
Let $n$ be an odd number $\geq\!5$, and let $m = \frac{n-3}{2}$. Fix some $a_1,a_2,\dots,a_m$ such that $0 < a_1 < \dots < a_m < 1$, and let $X = \{0,a_1,\dots,a_m\}$. We claim that the $X$-EPG fan constructed in the proof of Theorem~\ref{thm:EPG} is $\frac{1}{n}$-homogeneous. 
Let $C$ denote the comb of this fan, as constructed in the proof of Theorem~\ref{thm:EPG}, and let $F_C$ denote the fan itself. 
We know already that $F_C$ is an endpoint-homogeneous smooth fan, so we need only show it is $\frac{1}{n}$-homogeneous. 

Note that $X$ naturally divides $[0,1]$ into $n$ sets:
\begin{align*}
T &\,=\, \{0\}, \\
E &\,=\, \{1\}, \\
A_1 &\,=\, \{a_1\}, \ A_2 \,=\, \{a_2\}, \dots, \ A_m \,=\, \{a_m\}, \\  
D_1 &\,=\, (0,a_1), \ D_2 \,=\, (a_1,a_2), \dots, \ D_m \,=\, (a_{m-1,}a_m), \ D_{m+1} \,=\, (a_m,1). 
\end{align*}

\begin{center}
\begin{tikzpicture}[scale=.8]

\draw[thick] (0,0) -- (12,0); 

\node at (0,-.01) {\small $\bullet$};
\node at (2,-.01) {\small $\bullet$};
\node at (4,-.01) {\small $\bullet$};
\node at (7,-.01) {\small $\bullet$};
\node at (9,-.01) {\small $\bullet$};


\node at (-.05,.4) {\scriptsize $0$};
\node at (2.05,.4) {\scriptsize $a_1$};
\node at (4.05,.4) {\scriptsize $a_2$};
\node at (5.5,.4) {\scriptsize $\cdots$};
\node at (7.15,.4) {\scriptsize $a_{m-1}$};
\node at (9.05,.4) {\scriptsize $a_m$};

\node at (-.05,-.5) {\scriptsize $T$};
\node at (2.05,-.5) {\scriptsize $A_1$};
\node at (4.05,-.5) {\scriptsize $A_2$};
\node at (5.5,-.5) {\scriptsize $\cdots$};
\node at (7.15,-.5) {\scriptsize $A_{m-1}$};
\node at (9.05,-.5) {\scriptsize $A_m$};
\node at (1.05,-.35) {\scriptsize $D_1$};
\node at (3.05,-.35) {\scriptsize $D_2$};
\node at (8.15,-.35) {\scriptsize $D_{m}$};
\node at (10.55,-.35) {\scriptsize $D_{m+1}$};
\node at (12.05,-.425) {\scriptsize $E$};

\end{tikzpicture}
\end{center}

For every $B \in \B(F_C)$, there is a homeomorphism $\phi_B: I \to B$ mapping $1$ to the tip of $B$ such that $\phi_B[X] = B \cap \closure{E(F_C) \setminus B}$. Partition $F_C$ into $n$ classes of points by taking
$$P_Z \,=\, \bigcup_{B \in \B(F_C)} \phi_B^{-1}[Z]$$
where $Z$ is any of the classes listed above, $T$, $E$, $A_1,\dots,A_m$, $D_1, \dots, D_{m+1}$. 
It is clear that the $P_Z$ partition $F_C$ into $n$ sets, and we claim that this partition witnesses the $\frac{1}{n}$-homogeneity of $F_C$.

First, it is not difficult to see that these classes can be defined topologically. 
For example, $T = \{t\}$ (the top of $F_C$) and $E = E(F_C)$. Given $i \leq m$, $A_i$ consists of all those points $x \in F_C$ that are limits of endpoints, and such that the arc $[t,x]$ contains $i+1$ limits of endpoints (counting $t$ and $x$). Given $i \leq m+1$, $D_i$ consists of all those points $x \in F_C$ that are not limits of endpoints, and such that the arc $[t,x]$ contains $i$ limits of endpoints.

Because the $P_Z$ admit this kind of topological definition, every homeomorphism $F_C \to F_C$ preserves each of the $P_Z$. It remains to show that if $x,y \in P_Z$ for some particular $Z$, then there is a homeomorphism $h: F_C \to F_C$ with $h(x) = y$.

This is obvious for $Z=T$, and it is true for $Z=E$ because $F_C$ is endpoint-homogeneous. 

Fix $i \leq m$, and let $x,y \in A_i$. Because $F_C$ is endpoint-homogeneous, there is a homeomorphism $h: F_C \to F_C$ mapping the blade containing $x$ to the blade containing $y$. But $P_{A_i}$ contains exactly one point of every blade, and $h$ preserves $P_{A_i}$, so this implies $h(x) = y$.

It remains to consider the $D_i$. Fix $i \leq m+1$. 
Instead of considering two arbitrary members of $D_i$, we begin by consider two members of $P_{D_i}$ on $B_0$, the leftmost blade of $C$.
Fix $z_1 = (0,y_1)$ and $z_2 = (0,y_2)$ in $P_{D_i} \cap B_0$. 
Because $[z_1,z_2] \sub P_{D_i}$ and $P_{D_i} \cap \closure{\mathcal E(C)} = \0$, there is an open $U \supseteq [z_1,z_2]$ such that $U \cap \mathcal E(C) = \0$. Shrinking $U$ if necessary (and relabelling $z_1$ and $z_2$ so that $z_1$ is between $t$ and $z_2$), we may (and do) assume that
$$U \,=\, V \times (y_1-\e \,,\, y_2+\e)$$
for some clopen $V \sub K$ containing $0$. 
There is a homeomorphism $h_0: I \to I$ such that $h_0(y_1) = y_2$ and $h_0(y) = y$ for all $y \notin (y_1-\e,y_2+\e)$. But then
$$h(z) \,=\, \begin{cases}
\big( x,h_0(y) \big) &\text{ if } z = (x,y) \in U, \\
z &\text{ if not}
\end{cases}$$
is a homeomorphism $F_C \to F_C$ mapping $z_1$ to $z_2$. 

Thus for any two points of $P_{D_i} \cap B_0$, there is a homeomorphism $F_C \to F_C$ mapping one to the other. 
But also, because $F_C$ is endpoint-homogeneous and $P_{D_i}$ is homeomorphism-invariant, for every $x \in P_{D_i}$ there is a homeomorphism $F_C \to F_C$ mapping it to a point of $B_0$. By composing homeomorphisms, it follows that if $x,y \in P_{D_i}$ then there is a homeomorphism $F_C \to F_C$ mapping $x$ to $y$.
\end{proof}

Observe that not all $\frac{1}{n}$-homogeneous fans need be smooth, and not all $\frac{1}{n}$-homogeneous smooth fans need be endpoint-homogeneous. 
The picture following Definition~\ref{def:smooth} shows two non-smooth fans; the one on the left is $\frac{1}{7}$-homogeneous and the one on the right is $\frac{1}{5}$-homogeneous. 
Drawn below is a $\frac{1}{5}$-homogeneous smooth fan that is not endpoint-homogeneous.

\vspace{1mm}

\begin{center}
\begin{tikzpicture}[scale=1.5]

\begin{scope}[rotate=90*.7^0]
\draw (0,0) -- (2,0); \end{scope}
\begin{scope}[rotate=90*.7^1]
\draw (0,0) -- (2,0); \end{scope}
\begin{scope}[rotate=90*.7^2]
\draw (0,0) -- (2,0); \end{scope}
\begin{scope}[rotate=90*.7^3]
\draw (0,0) -- (2,0); \end{scope}
\begin{scope}[rotate=90*.7^4]
\draw (0,0) -- (2,0); \end{scope}
\begin{scope}[rotate=90*.7^5]
\draw (0,0) -- (2,0); \end{scope}
\begin{scope}[rotate=90*.7^6]
\draw (0,0) -- (2,0); \end{scope}
\begin{scope}[rotate=90*.7^7]
\draw (0,0) -- (2,0); \end{scope}
\draw (0,0) -- (2,0); 

\node at (.95,.027) {\tiny $.$};
\node at (.95,.06) {\tiny $.$};
\node at (.95,.093) {\tiny $.$};

\node at (1.8,.05) {\tiny $.$};
\node at (1.8,.11) {\tiny $.$};
\node at (1.8,.17) {\tiny $.$};

\end{tikzpicture}
\end{center}


\begin{theorem}\label{thm:1/6}
There is an endpoint-homogeneous, $\frac{1}{n}$-homogeneous smooth fan for all $n \geq 5$.
\end{theorem}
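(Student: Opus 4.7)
The odd case $n = 2m+3$ is the content of the previous theorem. For even $n \ge 6$, I would again invoke Theorem~\ref{thm:EPG} with a carefully chosen $X \sub [0,1]$ of type (2), but of a qualitatively different shape. A simple combinatorial count shows that if $X$ is built from a finite union of isolated points and closed non-degenerate intervals, then the resulting $X$-EPG fan $F_C$ automatically has an \emph{odd} number of orbits: starting from the star fan $X=\{0\}$ with its $3$ orbits, adding an isolated point in an ordinary region contributes $+2$ orbits (the point plus a split of the surrounding ordinary interval), replacing $\{0\}$ by $[0,a]$ contributes $+2$, and inserting an interval $[a,b]$ inside an ordinary region contributes $+4$; all such moves preserve parity. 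To reach even $n$, I therefore allow $X$ to contain a Cantor-set component, whose embedding contributes several new orbit classes at once and flips the parity.

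Concretely, for $n = 2m+8$ with $m \ge 0$, I would take $X = K \cup \{a_1, \ldots, a_m\}$, where $K$ is a Cantor set in $[0,c]$ with $\inf K = 0$ and $\sup K = c$, and $c < a_1 < \cdots < a_m < 1$. Theorem~\ref{thm:EPG} then supplies an endpoint-homogeneous $X$-EPG smooth fan $F_C$, and the plan is to verify that it has exactly $2m+8$ orbits. These are: the top $t$; the endpoints $E$; the extreme point $\phi(c)$ of $K$; the three classes of interior $K$-points (two-sided limits, left-limits only, right-limits only); the orbit of gaps of $K$; the $m+1$ ordinary intervals of $[c,1]$ determined by the $a_i$; and the $m$ isolated orbits $\{\phi(a_i)\}$. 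As in the odd case, distinctness of the listed classes follows from topological invariants recording the pattern of limit-of-endpoint subsets along the arc from $t$ to a representative point, and within each class any two points are identified via either a local shift on an ordinary interval or a composition of the sub-fan self-similarity maps $h^k_{n_1,\ldots,n_k}$ of Theorem~\ref{thm:EPG}, combined with endpoint-homogeneity to pass between blades.

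The value $n = 6$ falls below the $8$-orbit floor of the Cantor recipe above and so needs a separate, ad hoc $X$ of type (2) whose structure forces two of the eight canonical Cantor-related orbit classes to coincide; I would write such an $X$ down and verify directly that the associated $X$-EPG fan has exactly six orbits. The main obstacles are (a) tuning $X$ for each even $n$, and in particular for the borderline case $n = 6$, so that $F_C$ has \emph{exactly} $n$ orbits, and (b) checking that the maps $h^k_{n_1,\ldots,n_k}$ act transitively on each of the newly introduced Cantor-based orbit classes. The latter transitivity, which exploits the specific fractal structure of the comb $C$ rather than any abstract homogeneity of $K$, is the only substantially new ingredient compared to the odd case.
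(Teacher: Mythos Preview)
Your proposal has a genuine gap in the transitivity argument, and a missing case.

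The paper's construction for even $n$ uses not a Cantor set but a bi-infinite sequence
\[
X \,=\, \{0\}\cup\set{p_i}{i\in\Z}\cup\{a_1,\dots,a_m\},
\]
with $p_i\to 0$ and $p_i\to a_1$ and $m=\tfrac{n-4}{2}$, giving exactly the classes $T,E,L,G,A_1,\dots,A_m,D_1,\dots,D_m$ (so $n=6$ is the base case $m=1$). The crucial new ingredient, which you do not supply, is a self-homeomorphism of $F_C$ whose restriction to the leftmost blade $B_0$ is the shift $p_i\mapsto p_{i+j}$. This cannot be obtained by composing the self-similarity maps $h^k_{n_1,\dots,n_k}$: those yield homeomorphisms $V^k\to V^\ell$ between clopen pieces disjoint from $B_0$, and the patched homeomorphism $H$ built from them in Theorem~\ref{thm:EPG} acts as the identity on $B_0$. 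Instead the paper fixes a permutation $\psi$ of $\N$ satisfying $\phi(y_n)=y_{\psi(n)}$ (where $\phi$ is the shift on $X$), defines $h$ on each $V^1_n$ as a map onto $V^1_{\psi(n)}$ that mimics $\phi$ on the top segment $V^{1+}_n$, and then carries out a direct continuity check at points of $B_0$. This global construction is what makes $L$ and $G$ single orbits.

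Your Cantor-set variant could in principle be made to work for $n\ge 8$ using the same $\psi$-machinery---one would take $D$ to be the set of one-sided limit points of $K$ and verify that order-preserving self-homeomorphisms of $I$ preserving $K$ act transitively on each of your five $K$-classes---but this is substantially more work than you indicate, uses exactly the extension technique you omit, and still leaves $n=6$ unaddressed. The bi-infinite sequence is simpler precisely because a single explicit shift map handles all transitivity and covers $n=6$ directly.
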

\begin{proof}
In light of the previous theorem, it suffices to show that there is an endpoint-homogeneous, $\frac{1}{n}$-homogeneous smooth fan for all even $n \geq 6$. 

Let $\set{p_i}{i \in \Z}$ be a subset of $[0,1]$ such that $p_i < p_j$ whenever $i < j$, and $\lim_{i \to -\infty} p_i = 0$, and $\lim_{i \to \infty} p_i < 1$. Let $a_1 = \lim_{i \to \infty} p_i$, and if $n \geq 8$ fix some $a_2 < \dots < a_m < 1$, where $m = \frac{n-4}{2}$. Let
$$X \,=\, \set{p_i}{i \in \Z} \cup \{0,a_1\} \cup \{a_2,\dots,a_m\}.$$
This set $X$ is illustrated below for $n = 10$.

\begin{center}
\begin{tikzpicture}[scale=.8]

\draw[thick] (0,0) -- (12,0); 

\node at (0,-.01) {\small $\bullet$};
\node at (32/729,-.01) {\scriptsize $\bullet$};
\node at (64/729,-.01) {\scriptsize $\bullet$};
\node at (96/729,-.01) {\scriptsize $\bullet$};
\node at (128/729,-.01) {\scriptsize $\bullet$};
\node at (64/243,-.01) {\scriptsize $\bullet$};
\node at (32/81,-.01) {\scriptsize $\bullet$};
\node at (16/27,-.01) {\scriptsize $\bullet$};
\node at (8/9,-.01) {\scriptsize $\bullet$};
\node at (4/3,-.01) {\scriptsize $\bullet$};
\node at (2,-.01) {\scriptsize $\bullet$};
\node at (3,-.01) {\scriptsize $\bullet$};
\node at (4,-.01) {\scriptsize $\bullet$};
\node at (6,-.01) {\small $\bullet$};
\node at (6-32/729,-.01) {\scriptsize $\bullet$};
\node at (6-64/729,-.01) {\scriptsize $\bullet$};
\node at (6-96/729,-.01) {\scriptsize $\bullet$};
\node at (6-128/729,-.01) {\scriptsize $\bullet$};
\node at (6-64/243,-.01) {\scriptsize $\bullet$};
\node at (6-32/81,-.01) {\scriptsize $\bullet$};
\node at (6-16/27,-.01) {\scriptsize $\bullet$};
\node at (6-8/9,-.01) {\scriptsize $\bullet$};
\node at (6-4/3,-.01) {\scriptsize $\bullet$};
\node at (6-2,-.01) {\scriptsize $\bullet$};
\node at (6-3,-.01) {\scriptsize $\bullet$};

\node at (3.05,.4) {\scriptsize $p_{{}_0}$};
\node at (2.15,.4) {\scriptsize $p_{{}_{-1}}$};
\node at (4.05,.4) {\scriptsize $p_{{}_1}$};
\node at (1.45,.4) {\scriptsize $p_{{}_{-2}}$};
\node at (4.7,.4) {\scriptsize $p_{{}_2}$};
\node at (6.15,.4) {\scriptsize $a_1$};
\node at (-.06,.4) {\scriptsize $0$};
\node at (.7,.4) {\scriptsize $\cdots$};
\node at (5.3,.4) {\scriptsize $\cdots$};

\node at (8,-.01) {\small $\bullet$};
\node at (8.05,.4) {\scriptsize $a_2$};

\node at (10,-.01) {\small $\bullet$};
\node at (10.05,.4) {\scriptsize $a_3$};

\end{tikzpicture}
\end{center}

We claim that the $X$-EPG fan constructed in the proof of Theorem~\ref{thm:EPG} is $\frac{1}{n}$-homogeneous. 
Let $C$ denote the comb of this fan, as constructed in the proof of Theorem~\ref{thm:EPG}, and let $F_C$ denote the fan itself. 
We know already that $F_C$ is an endpoint-homogeneous smooth fan, so we need only show it is $\frac{1}{n}$-homogeneous. 

Not unlike in the previous proof, $X$ naturally divides $[0,1]$ into $n$ sets:
\begin{align*}
T &\,=\, \{0\}, \\
E & \,=\, \{1\}, \\
L &\,=\, \set{p_i}{i \in \Z}, \\
G &\,=\, \textstyle \bigcup \set{(p_i,p_{i+1})}{i \in \Z} \\
A_1 &\,=\, \{a_1\}, \ A_2 \,=\, \{a_2\}, \ \dots, \ A_{m-1} \,=\, \{a_{m-1}\}, \ A_m \,=\, \{a_m\}, \\
D_1 &\,=\, (a_1,a_2), \ D_2 \,=\, (a_2,a_3), \ \dots, \ D_{m-1} \,=\, (a_{m-1},a_m), \ D_m \,=\, (a_m,1).  
\end{align*}

For every $B \in \B(F_C)$, there is a homeomorphism $\phi_B: I \to B$ mapping $1$ to the tip of $B$ such that $\phi_B[X] = B \cap \closure{E(F_C) \setminus B}$. Partition $F_C$ into $n$ classes of points by taking
$$P_Z \,=\, \bigcup_{B \in \B(F_C)} \phi_B^{-1}[Z],$$
where $Z$ is any of the classes listed above, $T$, $E$, $P$, $I$, $A_1,\dots,A_m$, $D_1,\dots,D_m$.
It is clear that these sets $P_Z$ form a partition $F_C$ into $n$ sets, and we claim that this partition witnesses the $\frac{1}{n}$-homogeneity of $F_C$.

As in the previous proof, it is not difficult to see that each of the partition pieces $P_Z$ admits a purely topological definition. Explicitly: $T = \{t\}$ is just the top of $F_C$; $E = E(F_C)$; $L$ is the set of all those $x$ that are a limit of endpoints, and such that $[t,x] \cap \closure{\mathcal E(C) \setminus [t,x]}$ is a decreasing convergent sequence;  $G$ is defined similarly, except that $x$ is not a limit of endpoints; $A_i$ is all those $x$ that are a limit of endpoints, and such that, if $e$ denotes the endpoint on the blade of $x$, then $[x,e]$ contains $m-i$ limits of endpoints; $D_i$ is defined similarly, except that $x$ is not a limit of endpoints.

Because each of the $P_Z$ admits a purely topological definition, every homeomorphism $F_C \to F_C$ preserves each of the $P_Z$. It remains to show that if $x,y \in P_Z$ for some particular $Z$, then there is a homeomorphism $h: F_C \to F_C$ with $h(x) = y$.

This is obvious for $Z = T$, and it is true for $Z=E$ because $F_C$ is endpoint-homogeneous. 
The case $Z = A_i$ (for any $i \leq m$) follows from the endpoint-homogeneity of $F_C$ as in the previous proof. 
To be explicit, fix $i \leq m$ and let $x,y \in P_{A_i}$. 
Because $F_C$ is endpoint-homogeneous, there is a homeomorphism $F_C \to F_C$ mapping the blade containing $x$ to the blade containing $y$. Because $P_{A_i}$ contains exactly one point of every blade, and $h$ preserves $P_{A_i}$, this homeomorphism must map $x$ to $y$.

Like with the $A_i$, the argument for the $D_i$ is essentially the same as in the proof of the previous theorem. 
Just as we did there, we begin here by consider two members of $P_{D_i}$ on $B_0$, the leftmost blade of $C$. 
Fix $z_1 = (0,y_1)$ and $z_2 = (0,y_2)$ in $P_{D_i} \cap B_0$. 
Because $[z_1,z_2] \sub P_{D_i}$ and $P_{D_i} \cap \closure{\mathcal E(C)} = \0$, there is an open neighborhood $U \sub C$ of $[z_1,z_2]$ such that $U \cap \mathcal E(C) = \0$. Shrinking $U$ if necessary (and relabelling $z_1$ and $z_2$ so that $z_1$ is between $t$ and $z_2$), we may (and do) assume that
$$U \,=\, V \times (y_1-\e \,,\, y_2+\e)$$
for some clopen $V \sub K$ containing $0$. 
There is a homeomorphism $h_0: I \to I$ such that $h_0(y_1) = y_2$ and $h_0(y) = y$ for all $y \notin (y_1-\e,y_2+\e)$. But then
$$h(z) \,=\, \begin{cases}
\big( x,h_0(y) \big) &\text{ if } z = (x,y) \in U, \\
z &\text{ if not}
\end{cases}$$
is a homeomorphism $F_C \to F_C$ mapping $z_1$ to $z_2$. 

Thus for any two points of $P_{D_i} \cap B_0$, there is a homeomorphism $F_C \to F_C$ mapping one to the other. 
But also, because $F_C$ is endpoint-homogeneous and $P_{D_i}$ is homeomorphism-invariant, for every $x \in P_{D_i}$ there is a homeomorphism $F_C \to F_C$ mapping $x$ to a point of $B_0$. By composing homeomorphisms, it follows that if $x,y \in P_{D_i}$ then there is a homeomorphism $F_C \to F_C$ mapping $x$ to $y$.

Observe that the first part of this argument for the $P_{D_i}$ (not the previous paragraph, but the one before it) shows a little more than what is claimed. An essentially identical argument shows: 
\begin{itemize}
\item[$(*)$] If $i \in \Z$ and $z_1,z_2 \in \{0\} \times (p_i,p_{i+1})$, then there is a homeomorphism $F_C \to F_C$ mapping $z_1$ to $z_2$. 
\end{itemize}
This fact will be used to examine $P_G$ later, but next we look at $P_L$. 

Like with the $D_i$, we begin here by consider two members of $P_L$ on $B_0$. 
Fix $(0,p_i)$ and $(0,p_j)$ in $P_L \cap B_0$, with $i < j$. 
Let $\phi: I \to I$ be the homeomorphism such that 
\begin{itemize}
\item[$\circ$] $\phi(0) = 0$ and $\phi(z) = z$ for all $z \geq a_1$.
\item[$\circ$] $\phi(p_k) = p_{k+j-i}$ for all $k \in \Z$, and
\item[$\circ$] $\phi$ maps $(p_k,p_{k+1})$ linearly onto $(p_{k+j-i},p_{k+j-i+1})$ for all $k \in \Z$.
\end{itemize}
Note, in particular, that $\phi[X] = X$ and $\phi(p_i) = p_j$. We aim to find a homeomorphism $h: F_C \to F_C$ such that $h(0,y) = (0,\phi(y))$ for all $y > 0$ (in other words, $h$ restricts to $\phi$ on $B_0 \setminus \{t\}$).

In the proof of Theorem~\ref{thm:EPG}, our construction begins with a countable dense $D \sub X$. 
Because each of the $p_k$ is isolated in $X$, $\set{p_k}{k \in \Z} \sub D$. 
The construction proceeds by fixing an infinite sequence $\seq{y_n}{n \in \N}$ such that each member of $D$ appears infinitely often in the sequence. In particular, 
$$Y_k \,=\, \set{n}{y_n = p_k}$$
is infinite for every $k \in \N$. Let $\psi: \N \to \N$ be a permutation of $\N$ such that 
\begin{itemize}
\item[$\circ$] $\psi[Y_k] = Y_{k+j-i}$ for all $k \in \Z$, and
\item[$\circ$] $\psi$ acts as the identity map on $\N \setminus \bigcup_{k \in \Z}Y_k$.
\end{itemize}
Observe that $\phi(y_n) = y_{\psi(n)}$ for all $n \in \N$.

Recalling more notation from the proof of Theorem~\ref{thm:EPG}, $C_{>0} = C \cap (0,1]$, and for each $n \in \N$ there is a relatively clopen subset $V^1_n$ of $C_{>0}$ containing the blade at $x^1_n = \nicefrac{2}{3^n}$. Furtheremore, the $V^1_n$ form a partition of $C_{>0} \setminus B_0$. 

Recall that $M$ denotes the largest member of $X$, and $M^{n+1}y_n$ is the $Y$-coordinate of the highest point on the leftmost blade of $V^1_n$ that is a limit of endpoints. We divide $V^1_n$ into two pieces overlapping at one point: 

\begin{center}
\begin{tikzpicture}[xscale=4,yscale=3]


\draw[thin] (2/9,0) -- (2/9,2/3);

\draw[very thin] (2/9+2/27,0) -- (2/9+2/27,.75^2*.5*.75);
\draw[very thin] (2/9+2/81,0) -- (2/9+2/81,.75^2*.5*.5);
\draw[very thin] (2/9+2/243,0) -- (2/9+2/243,.75^2*.5*.75);
\draw[very thin] (2/9+2/729,0) -- (2/9+2/729,.75^2*.5*.5);

\draw[very thin] (2/9+2/27+2/81,0) -- (2/9+2/27+2/81,.75^3*.5*.75*.75);
\draw[very thin] (2/9+2/27+2/243,0) -- (2/9+2/27+2/243,.75^3*.5*.75*.5);
\draw[very thin] (2/9+2/27+2/729,0) -- (2/9+2/27+2/729,.75^3*.5*.75*.75);

\draw[very thin] (2/9+2/81+2/243,0) -- (2/9+2/81+2/243,.75^4*.5*.5*.75);
\draw[very thin] (2/9+2/81+2/729,0) -- (2/9+2/81+2/729,.75^4*.5*.5*.5);

\draw[very thin] (2/9+2/243+2/729,0) -- (2/9+2/243+2/729,.75^5*.5*.75*.75);

\draw[very thin] (2/9+2/81+2/243+2/729,0) -- (2/9+2/81+2/243+2/729,.75^5*.5*.5*.75*.75);

\draw[very thin] (2/9+2/27+2/243+2/729,0) -- (2/9+2/27+2/243+2/729,.75^5*.5*.75*.5*.75);
\draw[very thin] (2/9+2/27+2/81+2/729,0) -- (2/9+2/27+2/81+2/729,.75^4*.5*.75*.75*.5);
\draw[very thin] (2/9+2/27+2/81+2/243+2/729,0) -- (2/9+2/27+2/81+2/243+2/729,.75^5*.5*.75*.75*.75*.75);
\draw[very thin] (2/9+2/27+2/81+2/243,0) -- (2/9+2/27+2/81+2/243,.75^4*.5*.75*.75*.75);

\node at (1.25,.45) {$V^{1+}_n \,=\, \set{(x,y) \in V^1_n}{y \geq M^{n+1}y_n},$};
\node at (1.25,.1) {$V^{1-}_n \,=\, \set{(x,y) \in V^1_n}{y \leq M^{n+1}y_n}.$};

\draw[thin,densely dotted] (.12,.5*.75^3) -- (.45,.5*.75^3);

\end{tikzpicture}
\end{center}

\noindent In other words, $V^{1+}_n$ is just the top part of the leftmost blade of $V^1_n$, and $V^{1-}_n$ is everything else. In particular, note that $V^{1+}_n$ is a vertical line segment.

In the proof of the Theorem~\ref{thm:EPG}, we show that for any $m,n \in \N$ there is a homeomorphism $V^1_n \to V^1_m$, which in the proof is denoted $\big( h^1_m \big)^{-1} \circ h^1_n$. 
This homeomorphism scales and translates $V^{1+}_n$ onto $V^{1+}_m$ and (with a different scaling factor) $V^{1-}_n$ onto $V^{1-}_m$. 
For the present proof, we need a homeomorphism $V_n^1 \to V_{\psi(n)}^1$, defined by modifying $\big( h^1_{\psi(n)} \big)^{-1} \circ h^1_n$ as follows. 

The idea is that we would like to adjust the map $\big( h^1_{\psi(n)} \big)^{-1} \circ h^1_n: V_n^1 \to V_{\psi(n)}^1$ on the vertical segment $V_n^{1+}$ only, so that instead of a simple linear mapping onto $V_{\psi(n)}^{1+}$, it mimics $\phi \rest [0,y_n]$. 
More specifically, for each $n$ let $\ell_n$ denote the linear mapping $V^{1+}_n \to [0,y_n]$ sending the topmost point of $V^{1+}_n$ to $y_n$:
$$\ell_n(\nicefrac{2}{3^n},y) \,=\, \textstyle \frac{y_n}{y_n-M^{n+1}y_n}\big( y-M^{n+1}y_n \big).$$
Observe that these mappings are invertible, and $\big( \ell_{\psi(n)} \big)^{-1}$ is the linear mapping $[0,y_{\psi(n)}] \to V^{1+}_{\psi(n)}$ that sends $y_{\psi(n)}$ to the topmost point of $V^{1+}_{\psi(n)}$:
$$\big( \ell_{\psi(n)} \big)^{-1}(y) \,=\, \textstyle \Big( \frac{2}{3^{\psi(n)}} \,,\, \frac{y_{\psi(n)}-M^{\psi(n)+1}y_{\psi(n)}}{y_{\psi(n)}}y + M^{\psi(n)+1}y_{\psi(n)} \Big).$$
Define $h_n: V^1_n \to V^1_{\psi(n)}$ by setting
$$h_n(x) \,=\, \begin{cases}
\big( h^1_{\psi(n)} \big)^{-1} \circ h^1_n (z) &\text{ if } z \in V^{1-}_n, \\
\big( \ell_{\psi(n)} \big)^{-1} \circ \phi \circ \ell_n (z) & \text{ if } z \in V^{1+}_n.
\end{cases}$$ 
Note that $\ell_n$ is a linear mapping $V^{1+}_n \to [0,y_n]$, $\phi$ maps $[0,y_n]$ homeomorphically to $[0,y_{\psi(n)}]$, and $\big( \ell_{\psi(n)} \big)^{-1}$ is a linear mapping $[0,y_{\psi(n)}] \to V^{1+}_{\psi(n)}$. Thus the composition $\big( \ell_{\psi(n)} \big)^{-1} \circ \phi \circ \ell_n$ is a homeomorphism $V^{1+}_n \to V^{1+}_{\psi(n)}$. 
Furthermore, $\big( h^1_{\psi(n)} \big)^{-1} \circ h^1_n (z)$ is a homeomorphism $V^{1-}_n \to V^{1-}_{\psi(n)}$, and it is straightforward to check that each of these homeomorphisms sends the common point of $V^{1-}_n \cap V^{1+}_n$ to the common point of $V^{1-}_{\psi(n)} \cap V^{1+}_{\psi(n)}$. 
It follows that $h_n$ is a homeomorphism $V^1_n \to V^1_{\psi(n)}$.

Define a mapping $h: F_C \to F_C$ as follows:
$$h(z) \,=\, \begin{cases}
t &\text{ if } z = t, \\
\big( 0,\phi(y) \big) &\text{ if } z = (0,y) \in B_0 \text{ and } y > 0, \\
h_n(z) &\text{ if } z \in V^1_n \text{ for some } n.
\end{cases}$$
Observe that $F_C = \{t\} \cup C_{>0} = \{t\} \cup B_0 \cup \bigcup_{n \in \N}V^1_n$, so $h$ is defined on all of $F_C$. 
Because $\psi$ is a permutation of $\N$, and each $h_{n,\psi(n)}$ is a bijection $V^1_n \to V^1_{\psi(n)}$, and because $\phi: I \to I$ is a bijection with $\phi(0) = 0$, $h$ is a bijection. 
We claim that $h$ is also continuous. 

Because the $V^1_n$ are all clopen and the $h_{n,\psi(n)}$ are all homeomorphisms, it is clear that $h$ is continuous at every $z \in \bigcup_{n \in \N}V^1_n$. 

Let $z = (0,y) \in B_0 \setminus \{t\}$ (so $y \in (0,1]$). We show next that $h$ is continuous at $z$ in $\{z\} \cup \bigcup_{n \in \N}V^1_n$. 

To this end, let $\seq{z_n}{n \in \N}$ be a sequence of points in $\bigcup_{n \in \N}V^1_n$ converging to $z$. 
For each $n$ let $\bar x_n = \pi_1(z_n)$ and $\bar y_n = \pi_2(z_n)$, so that $z_n = (\bar x_n,\bar y_n)$. 

For each $n$, let $k_n$ denote the integer such that $z_n \in V^1_k$. 
Because each $V^1_k$ is clopen in $C$ and $\lim_{n \to \infty} z_n = (0,y) \notin \bigcup_{k \in \N}V^1_k$, no single $V^1_k$ can contain infinitely many of the $z_n$. Consequently, $\lim_{n \to \infty}k_n = \infty$. 

Next, observe that the maximum $Y$-coordinate of $V^{1-}_k$ is $M^{k+1}y_n$. Because $M = a_m < 1$, this goes to $0$ as $k$ increases. Because $\lim_{n \to \infty}k_n = \infty$ and $\lim_{n \to \infty} \bar y_n = y > 0$, it follows that $z_n \notin V^{1-}_{k_n}$ for sufficiently large values of $n$. Consequently, $z_n \in V^{1+}_{k_n}$ for all sufficiently large $n$. 

Thus, for sufficiently large $n$, we have
$$\bar x_n \,=\, \textstyle \frac{2}{3^{k_n}} \qquad \text{and} \qquad \bar y_n \in \big[ M^{k_n+1}y_n \,,\, y_n \big].$$
Plugging these values into the formulas above, we get $h(\bar x_n,\bar y_n) \,=\, $
$$\textstyle \Bigg( \frac{2}{3^{{\psi(k_n)}}} \,,\, \frac{y_{\psi(k_n)}-M^{\psi(k_n)+1}y_{\psi(k_n)}}{y_{\psi(k_n)}} \phi \bigg( \frac{y_{k_n}}{y_{k_n}-M^{k_n+1}y_{k_n}}\big( y-M^{k_n+1}y_{k_n} \big) \bigg) + M^{\psi(k_n)+1}y_{\psi(k_n)} \Bigg)$$
for all sufficiently large $n$. Recall that $\lim_{n \to \infty}k_n = \infty$, and (because $\psi$ is a bijection) this also implies $\lim_{n \to \infty} \psi(k_n) = \infty$. 
In particular, $\lim_{n \to \infty} \frac{2}{3^{\psi(k_n)}} = 0$ and, because $M < 1$, $\lim_{n \to \infty}M^{k_n+1} = 0$ and $\lim_{n \to \infty} M^{\psi(k_n)+1} = 0$. 
Therefore, taking the limit of the above expression for $h(\bar x_n,\bar y_n)$ as $n \to \infty$ gives 
$$\textstyle \lim_{n \to \infty} h(\bar x_n,\bar y_n) \,=\, (0,\phi(y)) \,=\, \phi(z).$$
Thus $h$ is continuous at $z$ in the space $\{z\} \cup \bigcup_{n \in \N}V^1_n$. 

But now observe that $h$ is clearly continuous at $z$ in $F_C \setminus \bigcup_{n \in \N}V^1_n$, because this space is just a copy of $[0,1]$ on which $h$ acts like $\phi$. 

Because $h$ is continuous at $z$ both in $\{z\} \cup \bigcup_{n \in \N}V^1_n$ and in $F_C \setminus \bigcup_{n \in \N}V^1_n$, it follows that $h$ is continuous at $z$ in $F_C$. Because $z$ was an arbitrary point of $B_0 \setminus \{t\}$, and because we already observed $h$ is continuous on $F_C \setminus B_0 = \bigcup_{n \in \N}V^1_n$, this shows that $h$ is continuous on $F_C \setminus \{t\}$. 

But if $h$ is continuous on $F_C \setminus \{t\}$ then it is continuous. Generally speaking, any continuous bijection $F_C \setminus \{t\} \to F_C \setminus \{t\}$ extends to a continuous bijection $F_C \to F_C$ by mapping $t$ to $t$, because $F_C$ is the one-point compactification of $F_C \setminus \{t\}$. 

Hence $h$ is a continuous bijection $F_C \to F_C$. Because $F_C$ is compact, $h$ is a homeomorphism. Furthermore, 
$h(0,p_i) = (0,\phi(p_i)) = (0,p_j)$ and so, as desired, we have found a homeomorphism sending one arbitrary point of $B_0 \cap P_L$ to another.

But, because $F_C$ is endpoint-homogeneous and $P_{L}$ is homeomorphism-invariant, for every $x \in P_{L}$ there is a homeomorphism $F_C \to F_C$ mapping it to a point of $B_0$. By composing homeomorphisms, it follows that if $x,y \in P_{L}$ then there is a homeomorphism $F_C \to F_C$ mapping $x$ to $y$. 

To complete the proof of the theorem, it now remains only to show that the same is true for the final piece of our partition, $P_G$. 
Thankfully, this follows relatively easily from the work we have already done for $P_L$, plus the statement $(*)$ from earlier in the proof. 

Observe that with $p_i,p_j$ as before, 
the homeomorphism $h$ described above does not only map $(0,p_i)$ to $(0,p_j)$; it maps the entire vertical line segment $\{0\} \times (p_i,p_{i+1})$ onto $\{0\} \times (\phi(p_i),\phi(p_{i+1})) = \{0\} \times (p_j,p_{j+1})$. 

Fix $x,y \in B_0 \cap P_G$. There are some $i,j \in \Z$ such that $x \in \{0\} \times (p_i,p_{i+1})$ and $y \in \{0\} \times (p_j,p_{j+1})$. As mentioned in the previous paragraph, there is a homeomorphism $F_C \to F_C$ that sends $\{0\} \times (p_i,p_{i+1})$ to $\{0\} \times (p_j,p_{j+1})$. 
But also, by the statement $(*)$, there is a homeomorphism mapping any given point of $\{0\} \times (p_i,p_{i+1})$ to any other point of $\{0\} \times (p_i,p_{i+1})$. By composing homeomorphisms, it follows that there is a homeomorphism $F_C \to F_C$ that maps $x$ to $y$. 

Thus for any two points of $B_0 \cap P_G$, there is a homeomorphism $F_C \to F_C$ mapping one to the other. 
But also, because $F_C$ is endpoint-homogeneous and $P_G$ is homeomorphism-invariant, for every $x \in P_G$ there is a homeomorphism $F_C \to F_C$ mapping it to a point of $B_0$. 
By composing homeomorphisms, it follows that if $x,y \in P_G$ then there is a homeomorphism $F_C \to F_C$ mapping $x$ to $y$. 
\end{proof}

\section{Open problems, and some non-smooth fans}

In this final section of the paper we state some open questions relating to endpoing-homogeneous and endpoint-generated fans. We also describe some examples of non-smooth fans relevant to these questions. These examples are meant more as illustrations than formal theorems. Accordingly, we do not include full proofs of some of our claims about these examples. 

\begin{problem}\label{q:classification}
Suppose $F$ and $G$ are smooth, endpoint-homogeneous fans with $EPG(F) = EPG(G)$. Are they homeomorphic?
\end{problem}

\noindent \textbf{Conjecture:} 
Yes, except for the degenerate case of $\0$-EPG fans (the $n$-ods).

\vspace{3mm}

\noindent Some partial results in this direction are known already:
\begin{itemize}
\item[$\circ$] The Cantor fan is the only $\{1\}$-EPG smooth fan (Corollary~\ref{cor:Cantor}).
\item[$\circ$] The Lelek fan is the only $[0,1]$-EPG smooth fan (Charatonik).
\item[$\circ$] The star is the only $\{0\}$-EPG smooth fan. (implicit in \cite{AHJ})
\item[$\circ$] The shrinking Cantor fan (i.e., the $\{0,1\}$-EPG fan constructed in Section 5; see the picture at the beginning of Section 6) is the only $\{0,1\}$-EPG smooth fan. (implicit in \cite[Theorem 2.2(2)]{AHJ})
\item[$\circ$] The $\set{\nicefrac{1}{n}}{n \geq 1}$-EPG smooth fans construction in Section 5 are the only $\set{\nicefrac{1}{n}}{n \in \N}$-EPG smooth fans. (see \cite{BGR})
\item[$\circ$] The $\set{\nicefrac{1}{n}}{n \geq 2}$-EPG smooth fans construction in Section 5 are the only $\set{\nicefrac{1}{n}}{n \in \N}$-EPG smooth fans. (see \cite{BGR})
\end{itemize}

\noindent It is worth pointing out that the above conjecture becomes false if the smoothness assumption is removed. The following example shows that if we allow non-smooth fans, then there are distinct homeomorphism types of $\{0,\nicefrac{1}{2}\}$-EPG fans, and there are even distinct homeomorphism types of non-smooth $\{0,\nicefrac{1}{2}\}$-EPG endpoint-homogeneous fans.

\begin{example}
Consider the following picture, which shows our construction of the comb for a $\{0,\nicefrac{1}{2}\}$-EPG endpoint-homogeneous smooth fan:

\vspace{1mm}

\begin{center}
\begin{tikzpicture}[xscale=5,yscale=4]

\draw[thick] (-.0014,-.0033) -- (1,-.0033);

\draw (0,0) -- (0,1);

\draw (2/3,0) -- (2/3,.5);
\draw (2/9,0) -- (2/9,.5);
\draw (2/27,0) -- (2/27,.5);
\draw (2/81,0) -- (2/81,.5);
\draw (2/243,0) -- (2/243,.5);
\draw (2/729,0) -- (2/729,.5);

\draw[thin] (2/3+2/9,0) -- (2/3+2/9,.5^3);
\draw[thin] (2/3+2/27,0) -- (2/3+2/27,.5^3);
\draw[thin] (2/3+2/81,0) -- (2/3+2/81,.5^3);
\draw[thin] (2/3+2/243,0) -- (2/3+2/243,.5^3);
\draw[thin] (2/3+2/729,0) -- (2/3+2/729,.5^3);

\draw[thin] (2/9+2/27,0) -- (2/9+2/27,.5^4);
\draw[thin] (2/9+2/81,0) -- (2/9+2/81,.5^4);
\draw[thin] (2/9+2/243,0) -- (2/9+2/243,.5^4);
\draw[thin] (2/9+2/729,0) -- (2/9+2/729,.5^4);

\draw[thin] (2/27+2/81,0) -- (2/27+2/81,.5^5);
\draw[thin] (2/27+2/243,0) -- (2/27+2/243,.5^5);
\draw[thin] (2/27+2/729,0) -- (2/27+2/729,.5^5);

\draw[thin] (2/81+2/243,0) -- (2/81+2/243,.5^6);
\draw[thin] (2/81+2/729,0) -- (2/81+2/729,.5^6);

\draw[thin] (2/243+2/729,0) -- (2/243+2/729,.5^7);

\draw[thin] (2/3+2/9+2/27,0) -- (2/3+2/9+2/27,.5^5);
\draw[thin] (2/3+2/9+2/81,0) -- (2/3+2/9+2/81,.5^5);
\draw[thin] (2/3+2/9+2/243,0) -- (2/3+2/9+2/243,.5^5);
\draw[thin] (2/3+2/9+2/729,0) -- (2/3+2/9+2/729,.5^5);
\draw[thin] (2/3+2/27+2/81,0) -- (2/3+2/27+2/81,.5^6);
\draw[thin] (2/3+2/27+2/243,0) -- (2/3+2/27+2/243,.5^6);
\draw[thin] (2/3+2/27+2/729,0) -- (2/3+2/27+2/729,.5^6);
\draw[thin] (2/3+2/81+2/243,0) -- (2/3+2/81+2/243,.5^7);
\draw[thin] (2/3+2/81+2/729,0) -- (2/3+2/81+2/729,.5^7);
\draw[thin] (2/3+2/243+2/729,0) -- (2/3+2/243+2/729,.5^8);

\draw[thin] (2/9+2/27+2/81,0) -- (2/9+2/27+2/81,.5^6);
\draw[thin] (2/9+2/27+2/243,0) -- (2/9+2/27+2/243,.5^6);
\draw[thin] (2/9+2/27+2/729,0) -- (2/9+2/27+2/729,.5^6);
\draw[thin] (2/9+2/81+2/243,0) -- (2/9+2/81+2/243,.5^7);
\draw[thin] (2/9+2/81+2/729,0) -- (2/9+2/81+2/729,.5^7);
\draw[thin] (2/9+2/243+2/729,0) -- (2/9+2/243+2/729,.5^8);

\draw[thin] (2/27+2/81+2/243,0) -- (2/27+2/81+2/243,.5^7);
\draw[thin] (2/27+2/81+2/729,0) -- (2/27+2/81+2/729,.5^7);
\draw[thin] (2/27+2/243+2/729,0) -- (2/27+2/243+2/729,.5^8);
\draw[thin] (2/81+2/243+2/729,0) -- (2/81+2/243+2/729,.5^8);

\draw[thin] (2/3+2/9+2/27+2/81,0) -- (2/3+2/9+2/27+2/81,.5^7);
\draw[thin] (2/3+2/9+2/27+2/243,0) -- (2/3+2/9+2/27+2/243,.5^7);
\draw[thin] (2/3+2/9+2/27+2/729,0) -- (2/3+2/9+2/27+2/729,.5^7);
\draw[thin] (2/3+2/9+2/81+2/243,0) -- (2/3+2/9+2/81+2/243,.5^8);
\draw[thin] (2/3+2/9+2/81+2/729,0) -- (2/3+2/9+2/81+2/729,.5^8);
\draw[thin] (2/3+2/9+2/243+2/729,0) -- (2/3+2/9+2/243+2/729,.5^9);
\draw[thin] (2/3+2/27+2/81+2/243,0) -- (2/3+2/27+2/81+2/243,.5^8);
\draw[thin] (2/3+2/27+2/81+2/729,0) -- (2/3+2/27+2/81+2/729,.5^8);
\draw[thin] (2/3+2/27+2/243+2/729,0) -- (2/3+2/27+2/243+2/729,.5^9);
\draw[thin] (2/3+2/81+2/243+2/729,0) -- (2/3+2/81+2/243+2/729,.5^9);
\draw[thin] (2/9+2/27+2/81+2/243,0) -- (2/9+2/27+2/81+2/243,.5^8);
\draw[thin] (2/9+2/27+2/81+2/729,0) -- (2/9+2/27+2/81+2/729,.5^8);
\draw[thin] (2/9+2/27+2/243+2/729,0) -- (2/9+2/27+2/243+2/729,.5^9);
\draw[thin] (2/9+2/81+2/243+2/729,0) -- (2/9+2/81+2/243+2/729,.5^9);
\draw[thin] (2/27+2/81+2/243+2/729,0) -- (2/27+2/81+2/243+2/729,.5^9);

\draw[thin] (2/3+2/9+2/27+2/81+2/243,0) -- (2/3+2/9+2/27+2/81+2/243,.5^9);
\draw[thin] (2/3+2/9+2/27+2/81+2/729,0) -- (2/3+2/9+2/27+2/81+2/729,.5^9);
\draw[thin] (2/3+2/9+2/27+2/243+2/729,0) -- (2/3+2/9+2/27+2/243+2/729,.5^10);
\draw[thin] (2/3+2/9+2/81+2/243+2/729,0) -- (2/3+2/9+2/81+2/243+2/729,.5^10);
\draw[thin] (2/3+2/27+2/81+2/243+2/729,0) -- (2/3+2/27+2/81+2/243+2/729,.5^10);
\draw[thin] (2/9+2/27+2/81+2/243+2/729,0) -- (2/9+2/27+2/81+2/243+2/729,.5^10);

\draw[thin] (2/3+2/9+2/27+2/81+2/243+2/729,0) -- (2/3+2/9+2/27+2/81+2/243+2/729,.5^10);

\end{tikzpicture}
\end{center}

\vspace{1mm}

\noindent Here are two ways of modifying this picture to obtain planar dendroids that, while no longer combs over $K$, still have the property that collapsing the bottom line to a point produces a (non-smooth) fan:

\vspace{1mm}

\begin{center}
\begin{tikzpicture}[xscale=5,yscale=4]

\draw[thick] (-.0014,-.0033) -- (1,-.0033);

\draw (0,0) -- (0,1);

\draw (2/3,0) -- (2/3,.8) -- (2/3+.3,.8) -- (2/3+.3,.5);
\draw (2/9,0) -- (2/9,.8) -- (2/9+.3/3,.8) -- (2/9+.3/3,.5);
\draw (2/27,0) -- (2/27,.8) -- (2/27+.3/9,.8) -- (2/27+.3/9,.5);
\draw (2/81,0) -- (2/81,.8) -- (2/81+.3/27,.8) -- (2/81+.3/27,.5);
\draw (2/243,0) -- (2/243,.8) -- (2/243+.3/81,.8) -- (2/243+.3/81,.5);
\draw (2/729,0) -- (2/729,.8) -- (2/729+.3/243,.8) -- (2/729+.3/243,.5);

\draw[thin] (2/3+2/9,0) -- (2/3+2/9,.5^2*.8) -- (2/3+2/9+.3/3,.5^2*.8) -- (2/3+2/9+.3/3,.5^2*.5);
\draw[thin] (2/3+2/27,0) -- (2/3+2/27,.5^2*.8) -- (2/3+2/27+.3/9,.5^2*.8) -- (2/3+2/27+.3/9,.5^2*.5);
\draw[thin] (2/3+2/81,0) -- (2/3+2/81,.5^2*.8) -- (2/3+2/81+.3/27,.5^2*.8) -- (2/3+2/81+.3/27,.5^2*.5);
\draw[thin] (2/3+2/243,0) -- (2/3+2/243,.5^2*.8) -- (2/3+2/243+.3/81,.5^2*.8) -- (2/3+2/243+.3/81,.5^2*.5);
\draw[thin] (2/3+2/729,0) -- (2/3+2/729,.5^2*.8);

\draw[thin] (2/9+2/27,0) -- (2/9+2/27,.5^3*.8) -- (2/9+2/27+.3/9,.5^3*.8) -- (2/9+2/27+.3/9,.5^3*.5);
\draw[thin] (2/9+2/81,0) -- (2/9+2/81,.5^3*.8) -- (2/9+2/81+.3/27,.5^3*.8) -- (2/9+2/81+.3/27,.5^3*.5);
\draw[thin] (2/9+2/243,0) -- (2/9+2/243,.5^3*.8) -- (2/9+2/243+.3/81,.5^3*.8) -- (2/9+2/243+.3/81,.5^3*.5);
\draw[thin] (2/9+2/729,0) -- (2/9+2/729,.5^3*.8);

\draw[thin] (2/27+2/81,0) -- (2/27+2/81,.5^4*.8) -- (2/27+2/81+.3/27,.5^4*.8) -- (2/27+2/81+.3/27,.5^4*.5);
\draw[thin] (2/27+2/243,0) -- (2/27+2/243,.5^4*.8) -- (2/27+2/243+.3/81,.5^4*.8) -- (2/27+2/243+.3/81,.5^4*.5);
\draw[thin] (2/27+2/729,0) -- (2/27+2/729,.5^4*.8);

\draw[thin] (2/81+2/243,0) -- (2/81+2/243,.5^5*.8) -- (2/81+2/243+.3/81,.5^5*.8) -- (2/81+2/243+.3/81,.5^5*.5);
\draw[thin] (2/81+2/729,0) -- (2/81+2/729,.5^5*.8);

\draw[thin] (2/243+2/729,0) -- (2/243+2/729,.5^6*.8);

\draw[thin] (2/3+2/9+2/27,0) -- (2/3+2/9+2/27,.5^4*.8) -- (2/3+2/9+2/27+.3/9,.5^4*.8) -- (2/3+2/9+2/27+.3/9,.5^4*.5);
\draw[thin] (2/3+2/9+2/81,0) -- (2/3+2/9+2/81,.5^4*.8) -- (2/3+2/9+2/81+.3/27,.5^4*.8) -- (2/3+2/9+2/81+.3/27,.5^4*.5);
\draw[thin] (2/3+2/9+2/243,0) -- (2/3+2/9+2/243,.5^4*.8) -- (2/3+2/9+2/243+.3/81,.5^4*.8) -- (2/3+2/9+2/243+.3/81,.5^4*.5);
\draw[thin] (2/3+2/9+2/729,0) -- (2/3+2/9+2/729,.5^4*.8);
\draw[thin] (2/3+2/27+2/81,0) -- (2/3+2/27+2/81,.5^5*.8) -- (2/3+2/27+2/81+.3/27,.5^5*.8) -- (2/3+2/27+2/81+.3/27,.5^5*.5);
\draw[thin] (2/3+2/27+2/243,0) -- (2/3+2/27+2/243,.5^5*.8) -- (2/3+2/27+2/243+.3/81,.5^5*.8) -- (2/3+2/27+2/243+.3/81,.5^5*.5);
\draw[thin] (2/3+2/27+2/729,0) -- (2/3+2/27+2/729,.5^5*.8);
\draw[thin] (2/3+2/81+2/243,0) -- (2/3+2/81+2/243,.5^6*.8) -- (2/3+2/81+2/243+.3/81,.5^6*.8) -- (2/3+2/81+2/243+.3/81,.5^6*.5);
\draw[thin] (2/3+2/81+2/729,0) -- (2/3+2/81+2/729,.5^6*.8);
\draw[thin] (2/3+2/243+2/729,0) -- (2/3+2/243+2/729,.5^7*.8);

\draw[thin] (2/9+2/27+2/81,0) -- (2/9+2/27+2/81,.5^5*.8) -- (2/9+2/27+2/81+.3/27,.5^5*.8) -- (2/9+2/27+2/81+.3/27,.5^5*.5);
\draw[thin] (2/9+2/27+2/243,0) -- (2/9+2/27+2/243,.5^5*.8) -- (2/9+2/27+2/243+.3/81,.5^5*.8) -- (2/9+2/27+2/243+.3/81,.5^5*.5);
\draw[thin] (2/9+2/27+2/729,0) -- (2/9+2/27+2/729,.5^5*.8);
\draw[thin] (2/9+2/81+2/243,0) -- (2/9+2/81+2/243,.5^6*.8) -- (2/9+2/81+2/243+.3/81,.5^6*.8) -- (2/9+2/81+2/243+.3/81,.5^6*.5);
\draw[thin] (2/9+2/81+2/729,0) -- (2/9+2/81+2/729,.5^6*.8);
\draw[thin] (2/9+2/243+2/729,0) -- (2/9+2/243+2/729,.5^7*.8);

\draw[thin] (2/27+2/81+2/243,0) -- (2/27+2/81+2/243,.5^6*.8) -- (2/27+2/81+2/243+.3/81,.5^6*.8) -- (2/27+2/81+2/243+.3/81,.5^6*.5);
\draw[thin] (2/27+2/81+2/729,0) -- (2/27+2/81+2/729,.5^6*.8);
\draw[thin] (2/27+2/243+2/729,0) -- (2/27+2/243+2/729,.5^7*.8);
\draw[thin] (2/81+2/243+2/729,0) -- (2/81+2/243+2/729,.5^7*.8);

\draw[thin] (2/3+2/9+2/27+2/81,0) -- (2/3+2/9+2/27+2/81,.5^6*.8) -- (2/3+2/9+2/27+2/81+.3/27,.5^6*.8) -- (2/3+2/9+2/27+2/81+.3/27,.5^6*.5);
\draw[thin] (2/3+2/9+2/27+2/243,0) -- (2/3+2/9+2/27+2/243,.5^6*.8) -- (2/3+2/9+2/27+2/243+.3/81,.5^6*.8) -- (2/3+2/9+2/27+2/243+.3/81,.5^6*.5);
\draw[thin] (2/3+2/9+2/27+2/729,0) -- (2/3+2/9+2/27+2/729,.5^6*.8);
\draw[thin] (2/3+2/9+2/81+2/243,0) -- (2/3+2/9+2/81+2/243,.5^7*.8) -- (2/3+2/9+2/81+2/243+.3/81,.5^7*.8) -- (2/3+2/9+2/81+2/243+.3/81,.5^7*.5);
\draw[thin] (2/3+2/9+2/81+2/729,0) -- (2/3+2/9+2/81+2/729,.5^7*.8);
\draw[thin] (2/3+2/9+2/243+2/729,0) -- (2/3+2/9+2/243+2/729,.5^8*.8);
\draw[thin] (2/3+2/27+2/81+2/243,0) -- (2/3+2/27+2/81+2/243,.5^7*.8) -- (2/3+2/27+2/81+2/243+.3/81,.5^7*.8) -- (2/3+2/27+2/81+2/243+.3/81,.5^7*.5);
\draw[thin] (2/3+2/27+2/81+2/729,0) -- (2/3+2/27+2/81+2/729,.5^7*.8);
\draw[thin] (2/3+2/27+2/243+2/729,0) -- (2/3+2/27+2/243+2/729,.5^8*.8);
\draw[thin] (2/3+2/81+2/243+2/729,0) -- (2/3+2/81+2/243+2/729,.5^8*.8);
\draw[thin] (2/9+2/27+2/81+2/243,0) -- (2/9+2/27+2/81+2/243,.5^7*.8) -- (2/9+2/27+2/81+2/243+.3/81,.5^7*.8) -- (2/9+2/27+2/81+2/243+.3/81,.5^7*.5);
\draw[thin] (2/9+2/27+2/81+2/729,0) -- (2/9+2/27+2/81+2/729,.5^7*.8);
\draw[thin] (2/9+2/27+2/243+2/729,0) -- (2/9+2/27+2/243+2/729,.5^8*.8);
\draw[thin] (2/9+2/81+2/243+2/729,0) -- (2/9+2/81+2/243+2/729,.5^8*.8);
\draw[thin] (2/27+2/81+2/243+2/729,0) -- (2/27+2/81+2/243+2/729,.5^8*.8);

\draw[thin] (2/3+2/9+2/27+2/81+2/243,0) -- (2/3+2/9+2/27+2/81+2/243,.5^8*.8) -- (2/3+2/9+2/27+2/81+2/243+.3/81,.5^8*.8) -- (2/3+2/9+2/27+2/81+2/243+.3/81,.5^8*.8);
\draw[thin] (2/3+2/9+2/27+2/81+2/729,0) -- (2/3+2/9+2/27+2/81+2/729,.5^8*.8);
\draw[thin] (2/3+2/9+2/27+2/243+2/729,0) -- (2/3+2/9+2/27+2/243+2/729,.5^9*.8);
\draw[thin] (2/3+2/9+2/81+2/243+2/729,0) -- (2/3+2/9+2/81+2/243+2/729,.5^9*.8);
\draw[thin] (2/3+2/27+2/81+2/243+2/729,0) -- (2/3+2/27+2/81+2/243+2/729,.5^9*.8);
\draw[thin] (2/9+2/27+2/81+2/243+2/729,0) -- (2/9+2/27+2/81+2/243+2/729,.5^9*.8);

\draw[thin] (2/3+2/9+2/27+2/81+2/243+2/729,0) -- (2/3+2/9+2/27+2/81+2/243+2/729,.5^9*.8);

\begin{scope}[shift={(1.3,0)}]

\draw[thick] (-.0014,-.0033) -- (1,-.0033);

\draw (0,0) -- (0,1);

\draw (2/3,0) -- (2/3,.8) -- (2/3+.3,.8) -- (2/3+.3,.5);
\draw (2/9,0) -- (2/9,.8) -- (2/9+.3/3,.8) -- (2/9+.3/3,.5) -- (2/9+.2/3,.5) -- (2/9+.2/3,.75) -- (2/9+.1/3,.75) -- (2/9+.1/3,.5);
\draw (2/27,0) -- (2/27,.8) -- (2/27+.3/9,.8) -- (2/27+.3/9,.5) -- (2/27+.24/9,.5) -- (2/27+.24/9,.75) -- (2/27+.18/9,.75) -- (2/27+.18/9,.5) -- (2/27+.12/9,.5) -- (2/27+.12/9,.75) -- (2/27+.06/9,.75) -- (2/27+.06/9,.5);
\draw (2/81,0) -- (2/81,.8) -- (2/81+.3/27,.8) -- (2/81+.3/27,.5); \draw[very thick] (2/81+.15/27,.75) -- (2/81+.15/27,.5);
\draw (2/243,0) -- (2/243,.8) -- (2/243+.3/81,.8) -- (2/243+.3/81,.5); \draw (2/243+.15/81,.75) -- (2/243+.15/81,.5);
\draw (2/729,0) -- (2/729,.8) -- (2/729+.3/243,.8) -- (2/729+.3/243,.5);

\draw[thin] (2/3+2/9,0) -- (2/3+2/9,.5^2*.8) -- (2/3+2/9+.3/3,.5^2*.8) -- (2/3+2/9+.3/3,.5^2*.5);
\draw[thin] (2/3+2/27,0) -- (2/3+2/27,.5^2*.8) -- (2/3+2/27+.3/9,.5^2*.8) -- (2/3+2/27+.3/9,.5^2*.5) -- (2/3+2/27+.2/9,.5^2*.5) -- (2/3+2/27+.2/9,.5^2*.75) -- (2/3+2/27+.1/9,.5^2*.75) -- (2/3+2/27+.1/9,.5^2*.5);
\draw[thin] (2/3+2/81,0) -- (2/3+2/81,.5^2*.8) -- (2/3+2/81+.3/27,.5^2*.8) -- (2/3+2/81+.3/27,.5^2*.5); \draw[very thin]  (2/3+2/81+.24/27,.5^2*.5) -- (2/3+2/81+.24/27,.5^2*.75) -- (2/3+2/81+.18/27,.5^2*.75) -- (2/3+2/81+.18/27,.5^2*.5) -- (2/3+2/81+.12/27,.5^2*.5) -- (2/3+2/81+.12/27,.5^2*.75) -- (2/3+2/81+.06/27,.5^2*.75) -- (2/3+2/81+.06/27,.5^2*.5);
\draw[thin] (2/3+2/243,0) -- (2/3+2/243,.5^2*.8) -- (2/3+2/243+.3/81,.5^2*.8) -- (2/3+2/243+.3/81,.5^2*.5);
\draw[thin] (2/3+2/243+.15/81,.5^2*.75) -- (2/3+2/243+.15/81,.5^2*.5);
\draw[thin] (2/3+2/729,0) -- (2/3+2/729,.5^2*.8);

\draw[thin] (2/9+2/27,0) -- (2/9+2/27,.5^3*.8) -- (2/9+2/27+.3/9,.5^3*.8) -- (2/9+2/27+.3/9,.5^3*.5);
\draw[thin] (2/9+2/81,0) -- (2/9+2/81,.5^3*.8) -- (2/9+2/81+.3/27,.5^3*.8) -- (2/9+2/81+.3/27,.5^3*.5) -- (2/9+2/81+.2/27,.5^3*.5) -- (2/9+2/81+.2/27,.5^3*.75) -- (2/9+2/81+.1/27,.5^3*.75) -- (2/9+2/81+.1/27,.5^3*.5);
\draw[thin] (2/9+2/243,0) -- (2/9+2/243,.5^3*.8) -- (2/9+2/243+.3/81,.5^3*.8) -- (2/9+2/243+.3/81,.5^3*.5);
\draw[thin] (2/9+2/243+.15/81,.5^3*.75) -- (2/9+2/243+.15/81,.5^3*.5);
\draw[thin] (2/9+2/729,0) -- (2/9+2/729,.5^3*.8);

\draw[thin] (2/27+2/81,0) -- (2/27+2/81,.5^4*.8) -- (2/27+2/81+.3/27,.5^4*.8) -- (2/27+2/81+.3/27,.5^4*.5);
\draw[thin] (2/27+2/243,0) -- (2/27+2/243,.5^4*.8) -- (2/27+2/243+.3/81,.5^4*.8) -- (2/27+2/243+.3/81,.5^4*.5);
\draw[thin] (2/27+2/243+.15/81,.5^4*.75) -- (2/27+2/243+.15/81,.5^4*.5);
\draw[thin] (2/27+2/729,0) -- (2/27+2/729,.5^4*.8);

\draw[thin] (2/81+2/243,0) -- (2/81+2/243,.5^5*.8) -- (2/81+2/243+.3/81,.5^5*.8) -- (2/81+2/243+.3/81,.5^5*.5);
\draw[thin] (2/81+2/729,0) -- (2/81+2/729,.5^5*.8);

\draw[thin] (2/243+2/729,0) -- (2/243+2/729,.5^6*.8);

\draw[thin] (2/3+2/9+2/27,0) -- (2/3+2/9+2/27,.5^4*.8) -- (2/3+2/9+2/27+.3/9,.5^4*.8) -- (2/3+2/9+2/27+.3/9,.5^4*.5);
\draw[thin] (2/3+2/9+2/81,0) -- (2/3+2/9+2/81,.5^4*.8) -- (2/3+2/9+2/81+.3/27,.5^4*.8) -- (2/3+2/9+2/81+.3/27,.5^4*.5) -- (2/3+2/9+2/81+.2/27,.5^4*.5) -- (2/3+2/9+2/81+.2/27,.5^4*.75) -- (2/3+2/9+2/81+.1/27,.5^4*.75) -- (2/3+2/9+2/81+.1/27,.5^4*.5);
\draw[thin] (2/3+2/9+2/243,0) -- (2/3+2/9+2/243,.5^4*.8) -- (2/3+2/9+2/243+.3/81,.5^4*.8) -- (2/3+2/9+2/243+.3/81,.5^4*.5);
\draw[thin] (2/3+2/9+2/243+.15/81,.5^4*.8) -- (2/3+2/9+2/243+.15/81,.5^4*.5);
\draw[thin] (2/3+2/9+2/729,0) -- (2/3+2/9+2/729,.5^4*.8);
\draw[thin] (2/3+2/27+2/81,0) -- (2/3+2/27+2/81,.5^5*.8) -- (2/3+2/27+2/81+.3/27,.5^5*.8) -- (2/3+2/27+2/81+.3/27,.5^5*.5);
\draw[thin] (2/3+2/27+2/243,0) -- (2/3+2/27+2/243,.5^5*.8) -- (2/3+2/27+2/243+.3/81,.5^5*.8) -- (2/3+2/27+2/243+.3/81,.5^5*.5);
\draw[thin] (2/3+2/27+2/243+.15/81,.5^5*.8) -- (2/3+2/27+2/243+.15/81,.5^5*.5);
\draw[thin] (2/3+2/27+2/729,0) -- (2/3+2/27+2/729,.5^5*.8);
\draw[thin] (2/3+2/81+2/243,0) -- (2/3+2/81+2/243,.5^6*.8) -- (2/3+2/81+2/243+.3/81,.5^6*.8) -- (2/3+2/81+2/243+.3/81,.5^6*.5);
\draw[thin] (2/3+2/81+2/729,0) -- (2/3+2/81+2/729,.5^6*.8);
\draw[thin] (2/3+2/243+2/729,0) -- (2/3+2/243+2/729,.5^7*.8);

\draw[thin] (2/9+2/27+2/81,0) -- (2/9+2/27+2/81,.5^5*.8) -- (2/9+2/27+2/81+.3/27,.5^5*.8) -- (2/9+2/27+2/81+.3/27,.5^5*.5);
\draw[thin] (2/9+2/27+2/243,0) -- (2/9+2/27+2/243,.5^5*.8) -- (2/9+2/27+2/243+.3/81,.5^5*.8) -- (2/9+2/27+2/243+.3/81,.5^5*.5);
\draw[thin] (2/9+2/27+2/243+.15/81,.5^5*.8) -- (2/9+2/27+2/243+.15/81,.5^5*.5);
\draw[thin] (2/9+2/27+2/729,0) -- (2/9+2/27+2/729,.5^5*.8);
\draw[thin] (2/9+2/81+2/243,0) -- (2/9+2/81+2/243,.5^6*.8) -- (2/9+2/81+2/243+.3/81,.5^6*.8) -- (2/9+2/81+2/243+.3/81,.5^6*.5);
\draw[thin] (2/9+2/81+2/729,0) -- (2/9+2/81+2/729,.5^6*.8);
\draw[thin] (2/9+2/243+2/729,0) -- (2/9+2/243+2/729,.5^7*.8);

\draw[thin] (2/27+2/81+2/243,0) -- (2/27+2/81+2/243,.5^6*.8) -- (2/27+2/81+2/243+.3/81,.5^6*.8) -- (2/27+2/81+2/243+.3/81,.5^6*.5);
\draw[thin] (2/27+2/81+2/729,0) -- (2/27+2/81+2/729,.5^6*.8);
\draw[thin] (2/27+2/243+2/729,0) -- (2/27+2/243+2/729,.5^7*.8);
\draw[thin] (2/81+2/243+2/729,0) -- (2/81+2/243+2/729,.5^7*.8);

\draw[thin] (2/3+2/9+2/27+2/81,0) -- (2/3+2/9+2/27+2/81,.5^6*.8) -- (2/3+2/9+2/27+2/81+.3/27,.5^6*.8) -- (2/3+2/9+2/27+2/81+.3/27,.5^6*.5);
\draw[thin] (2/3+2/9+2/27+2/243,0) -- (2/3+2/9+2/27+2/243,.5^6*.8) -- (2/3+2/9+2/27+2/243+.3/81,.5^6*.8) -- (2/3+2/9+2/27+2/243+.3/81,.5^6*.5);
\draw[thin] (2/3+2/9+2/27+2/243+.15/81,.5^6*.8) -- (2/3+2/9+2/27+2/243+.15/81,.5^6*.5);
\draw[thin] (2/3+2/9+2/27+2/729,0) -- (2/3+2/9+2/27+2/729,.5^6*.8);
\draw[thin] (2/3+2/9+2/81+2/243,0) -- (2/3+2/9+2/81+2/243,.5^7*.8) -- (2/3+2/9+2/81+2/243+.3/81,.5^7*.8) -- (2/3+2/9+2/81+2/243+.3/81,.5^7*.5);
\draw[thin] (2/3+2/9+2/81+2/729,0) -- (2/3+2/9+2/81+2/729,.5^7*.8);
\draw[thin] (2/3+2/9+2/243+2/729,0) -- (2/3+2/9+2/243+2/729,.5^8*.8);
\draw[thin] (2/3+2/27+2/81+2/243,0) -- (2/3+2/27+2/81+2/243,.5^7*.8) -- (2/3+2/27+2/81+2/243+.3/81,.5^7*.8) -- (2/3+2/27+2/81+2/243+.3/81,.5^7*.5);
\draw[thin] (2/3+2/27+2/81+2/729,0) -- (2/3+2/27+2/81+2/729,.5^7*.8);
\draw[thin] (2/3+2/27+2/243+2/729,0) -- (2/3+2/27+2/243+2/729,.5^8*.8);
\draw[thin] (2/3+2/81+2/243+2/729,0) -- (2/3+2/81+2/243+2/729,.5^8*.8);
\draw[thin] (2/9+2/27+2/81+2/243,0) -- (2/9+2/27+2/81+2/243,.5^7*.8) -- (2/9+2/27+2/81+2/243+.3/81,.5^7*.8) -- (2/9+2/27+2/81+2/243+.3/81,.5^7*.5);
\draw[thin] (2/9+2/27+2/81+2/729,0) -- (2/9+2/27+2/81+2/729,.5^7*.8);
\draw[thin] (2/9+2/27+2/243+2/729,0) -- (2/9+2/27+2/243+2/729,.5^8*.8);
\draw[thin] (2/9+2/81+2/243+2/729,0) -- (2/9+2/81+2/243+2/729,.5^8*.8);
\draw[thin] (2/27+2/81+2/243+2/729,0) -- (2/27+2/81+2/243+2/729,.5^8*.8);

\draw[thin] (2/3+2/9+2/27+2/81+2/243,0) -- (2/3+2/9+2/27+2/81+2/243,.5^8*.8) -- (2/3+2/9+2/27+2/81+2/243+.3/81,.5^8*.8) -- (2/3+2/9+2/27+2/81+2/243+.3/81,.5^8*.8);
\draw[thin] (2/3+2/9+2/27+2/81+2/729,0) -- (2/3+2/9+2/27+2/81+2/729,.5^8*.8);
\draw[thin] (2/3+2/9+2/27+2/243+2/729,0) -- (2/3+2/9+2/27+2/243+2/729,.5^9*.8);
\draw[thin] (2/3+2/9+2/81+2/243+2/729,0) -- (2/3+2/9+2/81+2/243+2/729,.5^9*.8);
\draw[thin] (2/3+2/27+2/81+2/243+2/729,0) -- (2/3+2/27+2/81+2/243+2/729,.5^9*.8);
\draw[thin] (2/9+2/27+2/81+2/243+2/729,0) -- (2/9+2/27+2/81+2/243+2/729,.5^9*.8);

\draw[thin] (2/3+2/9+2/27+2/81+2/243+2/729,0) -- (2/3+2/9+2/27+2/81+2/243+2/729,.5^9*.8);

\end{scope}

\end{tikzpicture}
\end{center}

\vspace{1mm}

\noindent These fans are still endpoint-homogeneous, with essentially the same proof as for the original version. (All the pieces of this picture are non-uniformly scaled versions of the whole, after making a topological adjustment to the leftmost blade.) 
This shows that there are at least two homeomorphism types of non-smooth, endpoint-homogeneous, $\{\nicefrac{1}{2}\}$-EPG fans. 
\hfill$\dashv$
\end{example}

It should be clear that the two pictures shown here can be modified in many other ways, producing a zoo of non-smooth, endpoint-homogeneous, $\{0,\nicefrac{1}{2}\}$-EPG fans. And there is nothing special about the set $\{0,\nicefrac{1}{2}\}$: the same thing works for any of the sets considered in the proof of Theorem~\ref{thm:EPG}. In light of this, we do not expect a nice classification of endpoint-homogeneous non-smooth fans, along the lines of what the conjecture following Problem~\ref{q:classification} proposes for smooth fans.

Considering non-smooth fans raises further questions. The results in Sections 4 and 5 characterize precisely those $X$ such that there is a smooth $X$-EPG fan. But what if we remove the smoothness requirement? 

\begin{problem}\label{q:WhichX}
For which $X \sub [0,1]$ is there an $X$-EPG fan?
\end{problem}

The following example shows that it is possible to have a non-smooth $[\nicefrac{1}{2},1]$-EPG fan, despite the fact that there are no smooth $[\nicefrac{1}{2},1]$-EPG fans. 

\begin{example}\label{ex:3d}
The Cantor space $K$ has a basis of clopen sets, each homeomorphic to $K$, and also contains many nowhere dense sets homeomorphic to $K$ (because every nonempty closed subset of $K$ is homeomorphic to $K$). Using these two facts, a straightforward recursive argument can be used to find a sequence $\seq{K_n}{n \in \N}$ of nowhere dense subsets of $K$, each homeomorphic to $K$, such that for any sequence of points $\seq{d_n}{n \in \N}$ such that $d_n \in K_n$ for all $n$, the set $\set{d_n}{n \in \N}$ is dense in $K$. 

Let $C_K$ denote the Cantor comb, viewed as a subset of the $X$-$Y$ plane in $3$-dimensional space: i.e., $C_K = (K \times [0,1] \times \{0\}) \cup ([0,1] \times \{0\} \times \{0\})$. 

Let $C_L$ be a comb whose corresponding fan is the Lelek fan (that is, $E(C_L)$ is dense in $C_L$). 
For each $n \in \N$, define $C^n_L \sub \R^3$ as follows. 
First, fix a homeomorphism $\phi_n: K \to K_n$. 
Next, define a function $C_L \to \R^3$ by
$$(x,y) \textstyle \,\mapsto\, \Big( \phi_n(x) \,,\, 1-\frac{y}{2} \,,\, \frac{y}{2n} \Big).$$
Finally, let $C_L^n$ be the image of $C_L$ under this mapping. Roughly speaking, $C_L^n$ is a copy of the Lelek comb, with the bottom line removed, but (1) situated over $K_n$ instead of over $K$, and (2) beginning at the top of the Cantor comb $C_K$, instead of on the $X$-axis, and $(3)$ tilted down towards the Cantor comb, so that $C^n_L$ approaches $C_K$ as $n$ increases. 
Let
$$C \,=\, C_K \cup \textstyle \bigcup_{n \in \N}C^n_L.$$

\begin{center}
\begin{tikzpicture}[xscale=.9,yscale=.75]

\draw (0,0) -- (3,3);

\draw (0,0) -- (8,0);
\draw (1/27,1/27) -- (8+1/27,1/27);
\draw (2/27,2/27) -- (8+2/27,2/27);
\draw (1/9,1/9) -- (8+1/9,1/9);
\draw (2/9,2/9) -- (8+2/9,2/9);
\draw (7/27,7/27) -- (8+7/27,7/27);
\draw (8/27,8/27) -- (8+8/27,8/27);
\draw (1/3,1/3) -- (8+1/3,1/3);
\begin{scope}[shift={(2/3,2/3)}]
\draw (0,0) -- (8,0);
\draw (1/27,1/27) -- (8+1/27,1/27);
\draw (2/27,2/27) -- (8+2/27,2/27);
\draw (1/9,1/9) -- (8+1/9,1/9);
\draw (2/9,2/9) -- (8+2/9,2/9);
\draw (7/27,7/27) -- (8+7/27,7/27);
\draw (8/27,8/27) -- (8+8/27,8/27);
\draw (1/3,1/3) -- (8+1/3,1/3);\end{scope}
\begin{scope}[shift={(2,2)}]
\draw (0,0) -- (8,0);
\draw (1/27,1/27) -- (8+1/27,1/27);
\draw (2/27,2/27) -- (8+2/27,2/27);
\draw (1/9,1/9) -- (8+1/9,1/9);
\draw (2/9,2/9) -- (8+2/9,2/9);
\draw (7/27,7/27) -- (8+7/27,7/27);
\draw (8/27,8/27) -- (8+8/27,8/27);
\draw (1/3,1/3) -- (8+1/3,1/3);
\begin{scope}[shift={(2/3,2/3)}]
\draw (0,0) -- (8,0);
\draw (1/27,1/27) -- (8+1/27,1/27);
\draw (2/27,2/27) -- (8+2/27,2/27);
\draw (1/9,1/9) -- (8+1/9,1/9);
\draw (2/9,2/9) -- (8+2/9,2/9);
\draw (7/27,7/27) -- (8+7/27,7/27);
\draw (8/27,8/27) -- (8+8/27,8/27);
\draw (1/3,1/3) -- (8+1/3,1/3);\end{scope}\end{scope}

\begin{scope}[shift={(2,2)}]
\draw (8,0) -- (4,4);
\draw (8+1/27,1/27) -- (4+1/27,4+1/27);
\draw (8+2/27,2/27) -- (4+2/27,4+2/27);
\draw (8+1/9,1/9) -- (4+1/9,4+1/9);
\draw (8+2/9,2/9) -- (4+2/9,4+2/9);
\draw (8+7/27,7/27) -- (4+7/27,4+7/27);
\draw (8+8/27,8/27) -- (4+8/27,4+8/27);
\draw (8+1/3,1/3) -- (4+1/3,4+1/3);\end{scope}

\begin{scope}[shift={(2/3,2/3)}]
\draw (8+2/9,2/9) -- (4+2/9,4/3+2/9);
\draw (8+7/27,7/27) -- (4+7/27,4/3+7/27);
\draw (8+8/27,8/27) -- (4+8/27,4/3+8/27);
\draw (8+1/3,1/3) -- (4+1/3,4/3+1/3);\end{scope}

\draw (8,0) -- (4,2);
\draw (8+1/27,1/27) -- (4+1/27,2+1/27);
\draw (8+2/27,2/27) -- (4+2/27,2+2/27);
\draw (8+1/9,1/9) -- (4+1/9,2+1/9);

\begin{scope}[shift={(26/9,26/9)}]
\draw (8+2/27,2/27) -- (4+2/27,1+2/27);
\draw (8+1/9,1/9) -- (4+1/9,1+1/9);\end{scope}

\begin{scope}[shift={(16/27,16/27)}]
\draw (8+2/27,2/27) -- (4+2/27,4/5+2/27);
\draw (8+1/9,1/9) -- (4+1/9,4/5+1/9);\end{scope}

\end{tikzpicture}
\end{center}

\vspace{1mm}

\noindent Here we have illustrated $C_K \cup \bigcup_{n=1}^5 C^n_L$ (but with the copies of the Lelek fan drawn more like the Cantor fan, so that they show up better; and we have made no attempt to show that the $K_n$ are nowhere dense).

It is not difficult to check that $C$ is a dendroid. Furthermore (like with combs) collapsing $[0,1] \times \{0\} \times \{0\}$ to a point yields a (non-smooth) fan. We claim it is a $[\nicefrac{1}{2},1]$-EPG fan. 

Now let $B$ be a blade of the fan. These come in two types: some lie flat in the $X$-$Y$ plane (for example, the co-meagerly many blades above points in $K \setminus \bigcup_{n=1}^\infty K_n$ in $C_K$), and some have a piece angled out of the $X$-$Y$ plane. Let us called these the type I and type II blades, respectively. 

Because of how we chose the $K_n$, and because the endpoints of the Lelek comb are dense in the Lelek comb, the closure of the endpoints of the type II blades is all of $K \times [\nicefrac{1}{2},1] \times \{0\}$, i.e., the top half of the comb $C_K$.

If $B$ is a type I blade, then the top half of $B$ is contained in $\closure{E(C) \setminus B}$, by the previous paragraph. But no end points of $C$ have $X$-coordinate below $\nicefrac{1}{2}$, so $B \cap \closure{E(C) \setminus B}$ is precisely the top half of $B$. 

If $B$ is a type II blade, then $B = B' \cup B''$, where $B'$ lies flat in the $X$-$Y$ plane and $B''$ is angled up from the $X$-$Y$ plane. As in the previous paragraph, $B' \cap \closure{E(C) \setminus B}$ is precisely the top half of $B'$. But observe that $B''$ is a blade in a copy of the Lelek fan attached to $C_K$, which means that all of $B''$ is contained in $\closure{E(C) \setminus B}$. Hence $B \cap \closure{E(C) \setminus B}$ is the top half of $B'$ plus all of $B''$, which is a closed final segment of $B$.
\hfill$\dashv$
\end{example}


\begin{problem}
Is the fan from Example~\ref{ex:3d} planar?
\end{problem}

Observe that this example of a non-smooth $[\nicefrac{1}{2}]$-EPG fan is {not} endpoint-homogeneous. (This is because only some of the blades of the fan witness its non-smoothness. More precisely, say $B \in \B(F)$ is \emph{smooth} if for every sequence $\seq{x_n}{n \in \N}$ in $F \setminus B$ converging to a point $x \in B$, we have $\lim_{n \to \infty}[t,x_n] = [t,x]$. Some of the blades in our $[\nicefrac{1}{2}]$-EPG fan are smooth (the type II blades), while some are not (the type I blades), and no homeomorphism of the fan can map a smooth blade onto a non-smooth blade.)

\begin{problem}\label{q:WhichX}
For which $X \sub [0,1]$ is there an endpoint-homogeneous $X$-EPG (but not necessarily smooth) fan?
\end{problem}

While the previous example shows there is an endpoint-generated fan that is not endpoint-homogeneous, we do not have a smooth example.

\begin{problem}
Is every $X$-EPG smooth fan endpoint-homogeneous?
\end{problem}

\noindent If the answer to this question is yes, then our conjecture above (if true) would provide a classification of all endpoint-generated smooth fans. 

\begin{problem}
Up to homeomorphism, is there only one $\frac{1}{6}$-homogeneous smooth fan?
\end{problem} 

As mentioned already, Acosta, Hoehn, and Ju\'arez prove in \cite{AHJ} that there is no $\frac{1}{4}$-homogeneous smooth fan. They do not rule out the possibility of a $\frac{1}{4}$-homogeneous non-smooth fan, however; they even prove some results about what such a fan would look like (if it exists). 

\begin{problem}
Is there a $\frac{1}{4}$-homogeneous (non-smooth) fan?
\end{problem} 

For the next question, recall that $X,Y \sub [0,1]$ are \emph{equivalently embedded} if there is an order-preserving homeomorphism $[0,1] \to [0,1]$ mapping $X$ onto $Y$. Given $X \sub [0,1]$, let us say $x,x' \in X$ are \emph{equivalently embedded} if there is an order-preserving homeomorphism $\phi: [0,1] \to [0,1]$ mapping $X$ onto $X$ such that $\phi(x) = x'$.

\begin{problem}
Suppose $X \sub [0,1]$ has $n$ equivalence classes of points with respect to the ``equivalently embedded'' relation described above. If there is an $X$-EPG fan, must it be $\frac{1}{n}$-homogeneous? More specifically, is this true for the smooth fans constructed in Section 5?
\end{problem}

We end with a question less directly related to the material of the paper. Endpoint-homogeneous fans have rich automorphism groups in one sense. But there are other senses in which a space can have a rich automorphism group, for example, the automorphism group's universal minimal flow can be extremely amenable. The Lelek fan has this property \cite{BK}, and the fans constructed in Section 5 are, in a sense, generalizations of the Lelek fan. This suggests the following question.

\begin{problem}
Except for the degenerate cases of the simple $n$-ods and the star, is the universal minimal flow of automorphism group of an endpoint-homogeneous fan extremely amenable? More specifically, is this true for the smooth fans constructed in Section 5?
\end{problem}

\end{document}